\DeclareSymbolFont{symbolsC}{U}{txsyc}{m}{n}
\DeclareMathSymbol{\Nearrow}{\mathrel}{symbolsC}{116}
\DeclareMathSymbol{\Searrow}{\mathrel}{symbolsC}{117}
\DeclareMathSymbol{\Nwarrow}{\mathrel}{symbolsC}{118}
\DeclareMathSymbol{\Swarrow}{\mathrel}{symbolsC}{119}
\newtheorem{theorem}{Theorem}[section]
\newtheorem{proposition}[theorem]{Proposition}
\newtheorem{corollary}[theorem]{Corollary}
\newtheorem{lemma}[theorem]{Lemma}
\newtheorem{conjecture}[theorem]{Conjecture}
\newtheorem{question}[theorem]{Question}
\theoremstyle{definition}
\newtheorem{definition}[theorem]{Definition}
\newtheorem{remark}[theorem]{Remark}
\newtheorem{example}[theorem]{Example}
\numberwithin{equation}{section}
\numberwithin{figure}{section}
\numberwithin{table}{section}
\renewcommand{\theenumi}{\alph{enumi}}
\newcommand{\domleq}{\preceq}   
\newcommand{\domgeq}{\succeq}   
\newcommand{\rows}[1]{\mathrm{rows}(#1)}
\newcommand{\cols}[1]{\mathrm{cols}(#1)}
\newcommand{\rowsk}[2]{\mathrm{rows}_{#1}(#2)}
\newcommand{\colsk}[2]{\mathrm{cols}_{#1}(#2)}
\newcommand{\rects}[3]{\mathrm{rects}_{#1,#2}(#3)}
\newcommand{\trim}[2]{\mathrm{trim}^{#1}(#2)}
\newcommand{\overlap}[2]{\mathrm{overlap}_{#1}(#2)}
\newcommand{\fsupp}[1]{\mathrm{supp}_F(#1)}
\newcommand{\ssupp}[1]{\mathrm{supp}_s(#1)}
\newcommand{\osupp}[2]{\mathrm{supp}_{#1}(#2)}
\newcommand{\suppfn}[1]{{\mathrm{Fsupp}(#1)}}
\newcommand{\ncn}[1]{{\mathrm{Overlaps}(#1)}}
\newcommand{\unsortedrows}[1]{\mathrm{ur}(#1)}
\newcommand{\unsortedcols}[1]{\mathrm{uc}(#1)}
\newcommand{\comp}[1]{\mathrm{comp}(#1)}
\newcommand{\QS}{S}
\renewcommand{\emptyset}{\varnothing}
\title[Comparing skew Schur functions]{Comparing skew Schur functions: \\a quasisymmetric perspective}
\author{Peter R.\,W. McNamara}
\address{Department of Mathematics, Bucknell University, Lewisburg, PA 17837, USA}
\email{\href{mailto:peter.mcnamara@bucknell.edu}{peter.mcnamara@bucknell.edu}}
\urladdr{\href{http://www.facstaff.bucknell.edu/pm040}{www.facstaff.bucknell.edu/pm040}}
\thanks{This work was partially support by a grant from the Simons Foundation (\#245597 to Peter McNamara)}
\subjclass[2010]{05E05 (Primary); 05E10, 06A07, 20C08, 20C30 (Secondary)} 
\keywords{skew Schur function, quasisymmetric function, $F$-positive, support containment, dominance order}
\begin{document}

\begin{abstract}
Reiner, Shaw and van Willigenburg showed that if two skew Schur functions $s_A$ and $s_B$ are equal, then the skew shapes $A$ and $B$ must have the same ``row overlap partitions.''  
Here we show that these row overlap equalities are also implied by a much weaker condition than Schur equality:  that $s_A$ and $s_B$ have the same support when expanded in the fundamental quasisymmetric basis $F$.  Surprisingly, there is significant evidence supporting a conjecture that the converse is also true.

In fact, we work in terms of inequalities, showing that if the $F$-support of $s_A$ contains that of $s_B$, then the row overlap partitions of $A$ are dominated by those of $B$, and again conjecture that the converse also holds.  Our evidence in favor of these conjectures includes their consistency with a complete determination of all $F$-support containment relations for $F$-multiplicity-free skew Schur functions.  We conclude with a consideration of how some other quasisymmetric bases fit into our framework.  
\end{abstract}

\maketitle

\tableofcontents

\section{Introduction}

For well-documented reasons (see, for example, \cite{Ful97, Ful00, Sag01, ec2}), the Schur functions $s_\lambda$ are often considered to be the most important basis for symmetric functions.  Furthermore, skew Schur functions $s_{\lambda/\mu}$ are both a natural generalization of Schur functions and a fundamental example of \emph{Schur-positive} functions, meaning that when expanded in the basis of Schur functions, all the coefficients are nonnegative.  The coefficients that result are the Littlewood--Richardson coefficients, which also arise in the representation theory of the symmetric and general linear groups, in the study of the cohomology ring of the Grassmannian, and in certain problems about eigenvalues of Hermitian matrices.  More information on these connections can be found in the aforementioned references.  

For skew shapes $A$ and $B$, determining conditions for the expression 
\begin{equation}\label{equ:difference}
s_A -s_B
\end{equation}
to be Schur-positive is a problem that has received much attention in recent years.  See, for example, \cite{BBR06, FFLP05, KWvW08, Kir04, LPP07, LLT97, McN08, McvW09b, McvW12, Oko97}.  It is well known that this question is currently intractable when stated in anything close to full generality.  A weaker condition than $s_A-s_B$ being Schur-positive is that the Schur support of $s_B$ is contained in the Schur support of $s_A$.  The \emph{Schur support} of $s_A$, also called the Schur support of $A$ and denoted $\ssupp{A}$, is defined to be the set of those $\lambda$ for which $s_\lambda$ appears with nonzero coefficient when we expand $s_A$ as a linear combination of Schur functions.  Support containment for skew Schur functions is directly relevant to the results of \cite{DoPy07, FFLP05, McN08, McvW09b, McvW12}; let us give the flavor of just one beautiful result about the support of skew Schur functions.  There exist Hermitian matrices $A$, $B$ and $C=A+B$, with eigenvalue sets $\mu$, $\nu$ and $\lambda$ respectively, if and only if $\nu$ is in the Schur support of $s_{\lambda/\mu}$.  (See the survey \cite{Ful00} and the references therein.)  

Of the aforementioned papers, the most relevant to the present work is \cite{McN08}, which gives necessary conditions on $A$ and $B$ for $s_A - s_B$ to be Schur-positive or, more generally, for the Schur support of $A$ to contain that of $B$.  These conditions are in terms of dominance order on $\rowsk{k}{A}$, which are partitions first defined in \cite{RSvW07} and which count certain overlaps among the rows of $A$.  We will put our new results in context below by comparing them with the results of \cite{McN08}.

Our goal is to further our understanding of the expression \eqref{equ:difference} and the $\rowsk{k}{A}$ conditions by moving to the setting of quasisymmetric functions.  One starting point for information on the importance and many applications of quasisymmetric functions is \cite{Wik13} and the references therein. 
We will place particular emphasis on the expansion of skew Schur functions in terms of Gessel's basis of fundamental quasisymmetric functions \cite{Ges84}, whose elements we denote by $F_\alpha$ for a composition $\alpha$.  Gessel's original applications of the $F$-basis were in studying $P$-partitions of posets and in enumerating certain permutations.  Like Schur functions, the $F_\alpha$ have a representation-theoretic significance, arising as the characteristics of the irreducible characters of the (type $A$) 0-Hecke algebra \cite{DKLT96, KrTh97}. 

By working in terms of the $F$-basis, we are able to make the advances listed in (a)--(e) below.  The concepts of $F$-positivity and $F$-support are defined, as one would expect, by considering expansions of skew Schur functions in terms of the $F$-basis instead of the Schur basis.   As shown by \cite[Theorem~7.19.7]{ec2} which appears as Theorem~\ref{thm:fexpansion} below, Schur functions are examples of $F$-positive functions.  The diagram shown in Figure~\ref{fig:implications} summarizes implications that are central to this paper.  The first two horizontal arrows are by definition of support, while the diagonal arrows are due to Schur functions being $F$-positive.  The rightmost arrow is our main result, Theorem~\ref{thm:fsupport}.  That this arrow could be replaced by the symbol $\Longleftrightarrow$ is Conjecture~\ref{con:fsupport}.
Before giving more details, let us give examples which will be relevant to the discussion that follows.  
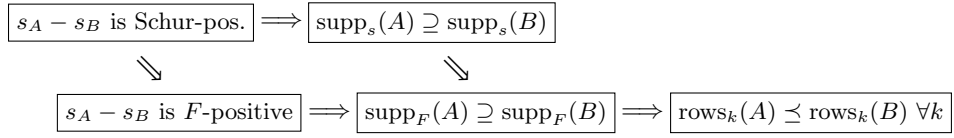
\begin{figure}
\begin{center}
\begin{tikzpicture}[scale=0.8]
\tikzstyle{every node}=[draw, inner sep=3pt]; 
\draw (0,1.5) node {\small$s_A - s_B$ is Schur-pos.};
\draw (0.8,0) node {\small$s_A - s_B$ is $F$-positive};
\draw (5,1.5) node {\small$\ssupp{A} \supseteq \ssupp{B}$};
\draw (5.9,0) node {\small$\fsupp{A} \supseteq \fsupp{B}$};
\draw (11.3,0) node {\small$\rowsk{k}{A} \domleq \rowsk{k}{B}\ \forall k$};
\tikzstyle{every node}=[]; 
\draw (2.5,1.5) node {$\Longrightarrow$};
\draw (3.3,0) node {$\Longrightarrow$};
\draw (8.5,0) node {$\Longrightarrow$};
\draw (0.3,0.75) node {$\Searrow$};
\draw (5.4,0.75) node {$\Searrow$};
\end{tikzpicture}
\caption{A summary of the implications most pertinent to this paper.  Here and elsewhere, $A$ and $B$ are skew shapes, and $\ssupp{A}$ (respectively $\fsupp{A}$) denotes the Schur support (resp.\ $F$-support) of $A$.}
\label{fig:implications}
\end{center}
\end{figure}
\begin{example}\label{exa:intro}
The three skew shapes shown here tend to be useful for providing counterexamples.
\[
\begin{array}{cccccc}
& 
\begin{tikzpicture}[scale=0.4]
\begin{scope}
\draw (-1.5,1.5) node {$A_1=$};
\draw[thick] (0,0) -- (1,0) -- (1,2) -- (3,2) -- (3,3) -- (1,3) -- (1,2) -- (0,2) -- cycle;
\draw (2,3) -- (2,2);
\draw (0,1) -- (1,1);
\end{scope}
\end{tikzpicture}
&&
\begin{tikzpicture}[scale=0.4]
\begin{scope}
\draw (-1.5,1.5) node {$A_2=$};
\draw[thick] (0,0) -- (1,0) -- (1,1) -- (2,1) -- (2,2) -- (3,2) -- (3,3) -- (1,3) -- (1,1) -- (0,1) -- cycle;
\draw (2,3) -- (2,2) -- (1,2);
\end{scope}
\end{tikzpicture}
&&
\begin{tikzpicture}[scale=0.4]
\begin{scope}
\draw (-1.5,1) node {$A_3=$};
\draw[thick] (0,0) -- (2,0) -- (2,2) -- (0,2) -- cycle;
\draw (1,0) -- (1,2);
\draw (0,1) -- (2,1);
\end{scope}
\end{tikzpicture}
\medskip \\ 
\mbox{Schur}
& 
s_{31} + s_{211}
&&
s_{31} + s_{22} + s_{211}
&&
s_{22}
\\ 
\mbox{expansion}
\smallskip \\
\mbox{$F$-expansion}
& 
F_{31} + F_{13} + F_{22} + {}
&&
F_{31} + F_{13} + 2F_{22} + {} 
&&
F_{22} + F_{121}
\\
& 
F_{211} + F_{121} + F_{112} 
&&
F_{211} + 2F_{121} + F_{112}
\end{array}
\]
\end{example}

As promised, here are the full details of our advances.  
\begin{enumerate}
\item\label{ite:equality}  It is shown in \cite{RSvW07} that if $s_A = s_B$ for skew shapes $A$ and $B$, then $A$ and $B$ have equal sets of row overlap partitions.  This result was strengthened in \cite{McN08} by showing that the same conclusion holds under the weaker assumption that the Schur supports of $A$ and $B$ are equal.  We show in Corollary~\ref{cor:equality} that the $F$-supports of $A$ and $B$ being equal is enough to imply $A$ and $B$ have equal sets of row overlap partitions.  This is a strengthening of the result from \cite{McN08} for the following reasons: if the Schur supports of $A$ and $B$ are equal, then it follows from \cite[Theorem~7.19.7]{ec2} that their $F$-supports are equal.  However, the converse is not true, as shown by $A_1$ and $A_2$ of Example~\ref{exa:intro}.  
\item\label{ite:fpositive} In a similar vein, it is shown in \cite{McN08} that if $s_A-s_B$ is Schur-positive, then the row overlap partitions of $A$ are dominated by those of $B$.  We show in Corollary~\ref{cor:fpositive} that the same conclusion can be drawn under the weaker assumption that $s_A - s_B$ is $F$-positive.  Referring to Example~\ref{exa:intro}, consider $s_{A_1} - s_{A_3}$ for an expression that is $F$-positive but not Schur-positive.   
\item\label{ite:fsupport} The two previous advances both follow from the following stronger new result in terms of supports.  It is shown in \cite{McN08} that if the Schur support of $A$ contains that of $B$, then the row overlap partitions of $A$ are dominated by those of $B$.  We prove in Theorem~\ref{thm:fsupport} that the same conclusion can be drawn under the weaker assumption that the $F$-support of $A$ contains that of $B$.  Again, $A_1$ and $A_3$ serve as an example. 

As an application, the contrapositive of Theorem~\ref{thm:fsupport} gives a very simple way to show that the $F$-support of $A$ does not contain the $F$-support of $B$, which implies, among other things, that $s_A-s_B$ is not Schur-positive.  
 
\item\label{ite:conjecture} As shown by $A_1$ and $A_3$ of Example~\ref{exa:intro}, it is certainly not the case that if the row overlaps of $A$ are dominated by those of $B$, then the Schur support of $A$ contains that of $B$.  However, we offer Conjecture~\ref{con:fsupport}: if $A$ and $B$ have the same number of boxes, then the row overlaps of $A$ are dominated by those of $B$ if and only if the $F$-support of $A$ contains that of $B$.  As a result, examining the rows overlaps would give a quick way to determine containment of $F$-supports.  In terminology we will define, the conjecture implies that the $F$-support poset is isomorphic to the overlaps poset. Therefore, the conjecture asserts that $F$-support containment somehow encapsulates exactly the relationship implied by dominance of row overlap partitions.  
Cases for which Conjecture~\ref{con:fsupport} holds include ribbons whose rows all have length at least 2, and all skew shapes with at most 12 boxes.

\item\label{ite:multfree} Bessenrodt and van Willigenburg \cite{BevW13} have classified all those skew shapes $A$ that are $F$-multiplicity-free, i.e., when $s_A$ is expanded in the $F$-basis, all coefficients are 0 or 1.  In Theorem~\ref{thm:multfree}, we determine completely the $F$-positivity and $F$-support comparabilities among $F$-multiplicity-free skew shapes.  The analogous relationships for the Schur multiplicity-free skew shapes are only known in special cases (for example, see \cite{McvW09b} for Schur multiplicity-free ribbons).  We then show that these $F$-support comparabilities are exactly as predicted by Conjecture~\ref{con:fsupport}.  
\end{enumerate}

We conclude with a consideration of other quasisymmetric function bases, specifically the monomial quasisymmetric functions, the quasisymmetric Schur functions of Haglund et al.\ \cite{HLMvW11}, and the dual immaculate basis of Berg et al.\ \cite{BBSSZpr}.  We augment Figure~\ref{fig:implications} by determining the positivity and support-containment implications involving these bases (see Figure~\ref{fig:moreimplications}).

The rest of the paper is organized as follows.  We give the preliminaries and relevant prior results in Sections~\ref{sec:prelims} and~\ref{sec:priorresults}, respectively.  Result \eqref{ite:fsupport} above and its consequences \eqref{ite:equality} and \eqref{ite:fpositive} are the topic of Section~\ref{sec:onedirection}.  In Section~\ref{sec:mainconjecture}, we present the converse conjecture (Conjecture~\ref{con:fsupport}) and offer evidence in its favor.  Section~\ref{sec:multfree} contains the results from \eqref{ite:multfree} about $F$-multiplicity-free skew shapes. We conclude in Section~\ref{sec:conclusion} with a consideration of how other quasisymmetric function bases fit into our framework.

\section{Preliminaries}\label{sec:prelims}

\subsection{Compositions, partitions and skew shapes}

Given a nonnegative integer $n$, a \emph{composition} of $n$ is a sequence $\alpha$ of positive integers whose sum is $n$.  We call $n$ the \emph{size} of $\alpha$ and denote it $|\alpha|$.  If $\alpha$ is weakly decreasing then it is said to be a \emph{partition} of $n$.  Let $\emptyset$ denote the unique partition of 0. 

We will follow the custom of letting $[n]$ denote the set $\{1,\ldots,n\}$.  For fixed $n$, there is a well-known bijection from compositions $\alpha = (\alpha_1, \ldots, \alpha_k)$ of $n$ to subsets  of $[n-1]$ that sends $\alpha$ to the set $S(\alpha)$ defined by
\[
S(\alpha) = \{\alpha_1, \alpha_1+\alpha_2, \ldots, \alpha_1+\alpha_2+\cdots+\alpha_{k-1}\}.
\]
If $S(\alpha)=T$, then we say $\alpha$ is the composition corresponding to the set $T$, and write $\comp{T}=\alpha$ for the inverse map.

Given a partition $\lambda$, we define its \emph{Young diagram} to be a left-justified array of boxes with $\lambda_i$ boxes in the $i$th row from the top.  If the Young diagram of another partition $\mu$ is contained in that of $\mu$, then the \emph{skew shape} $\lambda/\mu$ is obtained by removing the boxes corresponding to $\mu$ from the top-left of the Young diagram of $\lambda$. 
For example, the skew shapes from Example 1.1 can be expressed as $311/1$, $321/11$ and $22/\emptyset = 22$, respectively.  We will typically refer to skew shapes using uppercase Roman letters.  The \emph{size} of a skew shape $A$ is its number of boxes and is denoted $|A|$.

A \emph{horizontal strip} is a skew shape that has at most one box in each column, with \emph{vertical strips} defined similarly.  The \emph{transpose} $\lambda^t$ of a partition $\lambda$ is the partition obtained by reading the column lengths of the Young diagram of $\lambda$ from left to right.  For example $(443)^t = 3332$.  The transpose of a skew shape $A=\lambda/\mu$ is $A^t = \lambda^t/\mu^t$. 

For a skew shape $A$, let $\rows{A}$ (resp.\ $\cols{A}$) denote the partition consisting of the row (resp.\ column) lengths of $A$ sorted into weakly decreasing order.  A \emph{ribbon} is a skew shape in which every pair of adjacent rows overlap in exactly one column.  In particular, note that a ribbon is completely determined by its row lengths from top to bottom.  This allows us to define the notion of $\rows{\alpha}$ and $\cols{\alpha}$ for a composition $\alpha$ as $\rows{R}$ and $\cols{R}$ respectively, where $R$ is the ribbon whose row lengths from top to bottom are given by $\alpha$.  For example, $\rows{1311} = 3111$ and $\cols{1311} = 321$; in general, $\rows{\alpha}$ simply means the weakly decreasing reordering of the parts of $\alpha$.  Observe that, for example, $\cols{22}=22$ when we consider 22 to be a skew shape whereas $\cols{22}=211$ when we consider 22 to be a composition; we will ensure the meaning of our notation is clear from the context. 

We place a partial order on the set of all partitions according to the following definition.  

\begin{definition}
For partitions $\lambda = (\lambda_1, \lambda_2, \ldots, \lambda_r)$ and 
$\mu = (\mu_1, \mu_2, \ldots, \mu_s)$, we define \emph{dominance order} $\domleq$ by
$\lambda \domleq \mu$ if 
\[
\lambda_1 + \lambda_2 + \cdots \lambda_k \leq \mu_1 + \mu_2 + \cdots \mu_k
\]
for all $k=1,2,\ldots,r$, where we set $\mu_i=0$ if $i > s$.
In this case, we will say that $\mu$ \emph{dominates} $\lambda$, or is \emph{more dominant}
than $\lambda$.
\end{definition}

Note that the above definition makes sense even if $\lambda$ and $\mu$ are partitions of different size, as can be the case later when we compare $\rowsk{k}{A}$ and $\rowsk{k}{B}$ for $k\geq 2$.

As in \cite{McN08}, we will need the following result about this extended definition of dominance order.  Since it is straightforward to check, we leave the proof as an exercise.

\begin{lemma}\label{lem:dom_containment}
Consider two sequences $a = (a_1, \ldots, a_r)$ and $b = (b_1, \ldots, b_s)$ of nonnegative integers such that 
$r \leq s$ and $a_i \leq b_i$ for $i=1,2,\ldots,r$.  Let $\alpha$ and $\beta$ denote the partitions
obtained by sorting the parts of $a$ and $b$ respectively into weakly decreasing order.
Then $\alpha \domleq \beta$.  
\end{lemma}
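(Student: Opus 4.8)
The plan is to exploit the standard characterization of the partial sums of a sorted sequence as a maximum over subsets of the index set. First I would observe that, since $\alpha$ is obtained by sorting $a$ into weakly decreasing order, for each $k$ the partial sum $\alpha_1 + \cdots + \alpha_k$ equals the largest possible sum of $k$ of the entries of $a$; that is,
\[
\alpha_1 + \cdots + \alpha_k = \max_{\substack{I \subseteq [r] \\ |I| = k}} \sum_{i \in I} a_i,
\]
with the analogous identity for $\beta$ taken over size-$k$ subsets of $[s]$. Here we adopt the convention $\alpha_i = 0$ when $i$ exceeds the number of (nonzero) parts of $\alpha$, which is consistent with the convention already built into the definition of dominance. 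This identity is the one genuinely substantive point, and it follows from a routine exchange argument: if a size-$k$ index set does not consist of the positions of the $k$ largest values, then swapping in a larger unused value cannot decrease the sum, so the maximum is attained at the top $k$ values.

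With this in hand, I would fix $k$ with $1 \le k \le r$ and choose a subset $I \subseteq [r]$ of size $k$ that achieves the maximum for $a$, so that $\alpha_1 + \cdots + \alpha_k = \sum_{i \in I} a_i$. Because $r \le s$, we have $I \subseteq [r] \subseteq [s]$, so $I$ is in particular an admissible size-$k$ subset of $[s]$. Applying the hypothesis $a_i \le b_i$ for $i \le r$ termwise over $I$ then gives
\[
\sum_{i \in I} a_i \le \sum_{i \in I} b_i \le \max_{\substack{J \subseteq [s] \\ |J| = k}} \sum_{j \in J} b_j = \beta_1 + \cdots + \beta_k.
\]
Chaining the two displays yields $\alpha_1 + \cdots + \alpha_k \le \beta_1 + \cdots + \beta_k$ for every $k$ up to the number of parts of $\alpha$ (equivalently, for all $k \le r$), which is precisely the assertion $\alpha \domleq \beta$.

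I do not anticipate any serious obstacle. The only step requiring a moment's care is the subset characterization of the partial sums, and even that is elementary. The essential structural observation is simply that $[r]$ sits inside $[s]$, so a maximizing $k$-subset for $a$ is automatically an allowed $k$-subset for $b$; after that, the componentwise inequality $a_i \le b_i$ finishes the argument without any need to track how sorting permutes the indices. I would expect the final write-up to run only a few lines, in keeping with the paper's remark that the verification is routine.
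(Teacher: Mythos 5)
Your proof is correct and complete: the characterization of $\alpha_1 + \cdots + \alpha_k$ as the maximum of $\sum_{i \in I} a_i$ over $k$-subsets $I$ is valid, and the chain of inequalities using $I \subseteq [r] \subseteq [s]$ together with the termwise bound $a_i \le b_i$ establishes dominance for every $k \le r$, exactly as the lemma requires. Note that the paper itself gives no proof to compare against --- it explicitly leaves this lemma as an exercise --- and your argument is precisely the kind of routine verification the author had in mind.
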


\subsection{Quasisymmetric functions}

For a formal power series $f$ in the variables $x_1, x_2, \ldots$, let $[x_{i_1}^{a_1} x_{i_2}^{a_2} \cdots x_{i_k}^{a_k}]f$ denote the coefficient of $x_{i_1}^{a_1} x_{i_2}^{a_2} \cdots x_{i_k}^{a_k}$ in the expansion of $f$ into monomials.

\begin{definition}
A quasisymmetric function in the variables $x_1, x_2,\ldots$, say with rational coefficients, is a formal power series $f \in \mathbb{Q}[[x_1, x_2, \ldots]]$ of bounded degree such that for every sequence $a_1, a_2, \ldots a_k$ of positive integers, we have
\[
[x_{i_1}^{a_1} x_{i_2}^{a_2} \cdots x_{i_k}^{a_k}]f
= [x_{j_1}^{a_1} x_{j_2}^{a_2} \cdots x_{j_k}^{a_k}]f
\]
whenever $i_1 < i_2 < \cdots < i_k$ and $j_1 < j_2 < \cdots < j_k$.
\end{definition}
As an example, the formal power series
\[
\sum_{1 \leq i < j} x_i^2 x_j
\]
is quasisymmetric but not symmetric.  

For a composition $\alpha=(\alpha_1, \ldots, \alpha_k)$, we define the \emph{monomial quasisymmetric function} $M_\alpha$ by 
\begin{equation}\label{equ:monomials}
M_\alpha = \sum_{i_1 < \cdots < i_k} x_{i_1}^{\alpha_1} \cdots x_{i_k}^{\alpha_k}.
\end{equation}
It is clear that the set $\{M_\alpha\}$, where $\alpha$ ranges over all compositions of size $n$, is a basis for the vector space of quasisymmetric functions of degree $n$.  A more important basis for our purposes is the basis of \emph{fundamental quasisymmetric functions} $F_\alpha$ defined by 
\begin{equation}\label{equ:mtof}
F_\alpha = \sum_{S(\alpha) \subseteq T \subseteq [n-1]} M_{\comp{T}}
\end{equation}
when $\alpha$ has size $n$.  
For example, $F_{22} = M_{22} + M_{211} + M_{112} + M_{1111}$.

For a skew shape $A$ with $n$ boxes, a \emph{standard Young tableau (SYT)} of shape $A$ is a filling of the boxes of $A$ with the numbers $1, 2, \ldots, n$, each used exactly once, so that the numbers increase down the columns and from left to right along the rows.  For example,
\[
\begin{tikzpicture}[scale=0.4]
\draw[thick] (0,0) -- (0,1) -- (1,1) -- (1,3) -- (3,3) -- (3,1) -- (2,1) -- (2,0) -- cycle;
\draw (1,0) -- (1,1) -- (2,1) -- (2,3);
\draw (1,2) -- (3,2);
\draw (0.5,0.5) node {6};
\draw (1.5,0.5) node {4};
\draw (1.5,1.5) node {3};
\draw (1.5,2.5) node {1};
\draw (2.5,1.5) node {5};
\draw (2.5,2.5) node {2};
\end{tikzpicture}
\]
is an SYT  of shape 332/11.
The \emph{descent set} $S$ of an SYT $T$ of shape $A$ is the set of numbers $i$ for which $i+1$ appears in a lower row than $i$.  The \emph{descent composition} of $T$, denoted $\comp{T}$, is then the composition of $|A|$ corresponding to $S$.  For example, the SYT above has descent set $\{2,3,5\}$ and descent composition 2121.

Since the following result, which appears as \cite[Theorem~7.19.7]{ec2}, expresses skew Schur functions in the $F$-basis, it is crucial to this paper and is the reason why the $F$-basis is a natural choice of quasisymmetric basis when comparing skew Schur functions.  Although skew Schur functions are typically defined as a sum of monomials, Theorem~\ref{thm:fexpansion} can also serve as a definition of skew Schur functions for our purposes.   

\begin{theorem}[\cite{Ges84,StaThesis71,StaThesis}]\label{thm:fexpansion}
For a skew shape $A$, we have
\[
s_A = \sum_T F_{\comp{T}}
\]
where the sum is over all standard Young tableau $T$ of shape $A$.  
\end{theorem}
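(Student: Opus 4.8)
The plan is to start from the combinatorial definition of the skew Schur function as a generating function over semistandard Young tableaux (SSYT), namely $s_A = \sum_S x^S$, where $S$ ranges over all fillings of the boxes of $A$ with positive integers that weakly increase along rows and strictly increase down columns, and $x^S = \prod_i x_i^{m_i(S)}$ records the content (with $m_i(S)$ the number of boxes of $S$ containing $i$). The central device is \emph{standardization}. Given an SSYT $S$ of shape $A$ with $n$ boxes, one relabels its entries by $1, 2, \ldots, n$, preserving the order of the values and breaking ties among equal entries by a fixed rule depending on the positions of the boxes. One must check that the result $\mathrm{std}(S)$ is a genuine SYT of shape $A$, so that standardization defines a map from SSYT of shape $A$ to SYT of shape $A$. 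The strategy is then to partition the SSYT of shape $A$ into the fibres of this map and to evaluate $\sum_S x^S$ fibre by fibre, showing that each fibre contributes exactly one term $F_{\comp{T}}$.

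Next I would fix an SYT $T$ of shape $A$ and analyze the fibre of all SSYT $S$ with $\mathrm{std}(S) = T$. Writing $b_j$ for the entry that such an $S$ places in the box labelled $j$ by $T$, I claim these $S$ are in bijection with weakly increasing sequences $1 \le b_1 \le b_2 \le \cdots \le b_n$ satisfying the additional constraint that $b_j < b_{j+1}$ whenever $j$ lies in the descent set $D = S(\comp{T})$ of $T$, with equality permitted at all other positions. Granting this, I would sum the monomials $x_{b_1} \cdots x_{b_n}$ over all admissible sequences by grouping them according to the set $U$ of positions at which a strict increase actually occurs; such $U$ satisfy $D \subseteq U \subseteq [n-1]$, and the sequences with prescribed $U$ use $|U|+1$ distinct values with multiplicities given by $\comp{U}$, so they contribute precisely $M_{\comp{U}}$. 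Summing over admissible $U$ yields $\sum_{D \subseteq U \subseteq [n-1]} M_{\comp{U}}$, which is exactly $F_{\comp{T}}$ by the defining relation \eqref{equ:mtof}.

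Summing this identity over all SYT $T$ of shape $A$ then gives $s_A = \sum_T F_{\comp{T}}$, as claimed. I expect the main obstacle to be the verification underlying the bijection of the second paragraph: one must set up the tie-breaking rule in standardization so that a \emph{forced} strict inequality $b_j < b_{j+1}$ occurs if and only if $j$ is a descent of $\mathrm{std}(S)$. The delicate point is that two equal entries of $S$ receive consecutive standard labels $j, j+1$ only when the box of $S$ carrying the label $j+1$ is not strictly below that carrying $j$, so that no descent is created by a tie; and conversely, whenever box $j+1$ lies strictly below box $j$ in $T$, the SSYT column-strictness propagated through the intervening boxes must force the value to increase. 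Establishing this equivalence in both directions is essentially the fundamental lemma on $(P,\omega)$-partitions applied to the poset of cells of $A$, and confirming that the resulting monomials reassemble into the $M$-sum above is the technical crux; the remaining steps are routine bookkeeping with the correspondence between compositions of $n$ and subsets of $[n-1]$.
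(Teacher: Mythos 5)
Your proof is correct, but note that the paper itself does not prove this statement at all: Theorem~\ref{thm:fexpansion} is quoted from the literature (Gessel and Stanley; it is \cite[Theorem~7.19.7]{ec2}), and the author even remarks that it may be taken as the \emph{definition} of $s_A$ for the purposes of the paper. So there is no internal proof to compare against; what you have written is essentially the classical argument from the cited sources, namely standardization of semistandard tableaux, which is the $(P,\omega)$-partition proof specialized to the poset of cells of $A$. Your decomposition is the right one: the fibre of an SYT $T$ under standardization is exactly the set of weakly increasing sequences $1 \le b_1 \le \cdots \le b_n$ that increase strictly at the descents of $T$, and grouping by the set $U$ of actual strict increases gives $\sum_{S(\comp{T}) \subseteq U \subseteq [n-1]} M_{\comp{U}} = F_{\comp{T}}$ by~\eqref{equ:mtof}. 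The one point you leave as a sketch, the two-sided verification of the fibre description, is indeed where all the work lives, and your diagnosis of it is accurate. For completeness, the forward direction is as you say (left-to-right tie-breaking means two equal values never receive labels $j,j+1$ with box $j+1$ in a lower row), and the converse direction has a clean one-line form you could add: if box $k$ lies directly below box $j$ in $T$ with $j<k$, then some $i$ with $j \le i < k$ must be a descent of $T$ (otherwise each of boxes $j, j+1, \ldots, k$ sits in a weakly higher row than its predecessor, contradicting that box $k$ is strictly lower than box $j$), whence $b_j \le b_i < b_{i+1} \le b_k$ and column-strictness holds; a similar argument shows that when $i$ is not a descent, box $i+1$ lies strictly to the right of box $i$, so standardization of the resulting filling recovers $T$.
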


For example, the SYT above contributes $F_{2121}$ to $s_{332/11}$.

Theorem~\ref{thm:fexpansion} tells us that $s_A$ is an example of an \emph{$F$-positive} symmetric function, meaning that it has all nonnegative coefficients when expanded in the $F$-basis.  Analogously to Schur support, we define the \emph{$F$-support} of $A$, denoted $\fsupp{A}$, to be the set of compositions $\alpha$ such that $F_\alpha$ appears with positive coefficient when $s_A$ is expanded in the $F$-basis. 
For any other quasisymmetric basis $\{B_\alpha\}$, analogous definitions of \emph{$B$-positive} and \emph{$B$-support} are obtained by replacing $F$ with $B$. 

\section{Prior results}\label{sec:priorresults}

In \cite{RSvW07}, Reiner, Shaw and van Willigenburg gave sufficient conditions for two skew shapes to yield the same skew Schur function.  More relevant for the purposes of the current discussion is that they also wrote one section (Section~8) on \emph{necessary} conditions for two skew shapes $A$ and $B$ to satisfy $s_A = s_B$.  Their necessary conditions are dependent on certain overlaps among the rows of a skew shape.  Before discussing their work, let us first state a relevant classical result along the same lines; it can be considered a starting point for necessary conditions for skew Schur equality.  A proof in our terminology can be found in \cite{McN08}, and earlier proofs can be found in \cite{Lam77, Zab}.  

\begin{proposition}\label{pro:extreme_fillings}  Let $A$ and $B$ be skew shapes.  If $\lambda \in \ssupp{A}$, then 
\[
\rows{A} \domleq \lambda \domleq \cols{A}^t, 
\]
 and both $s_{\rows{A}}$ and $s_{\cols{A}^t}$ appear with coefficient 1 in the Schur expansion of $s_A$.  Consequently, if $\ssupp{A} \supseteq \ssupp{B}$, then
\[
\rows{A} \domleq \rows{B}  \mbox{\ \ and \ \ } \cols{A} \domleq \cols{B}.
\]
\end{proposition}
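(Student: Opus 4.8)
The plan is to prove the two bounds $\rows{A} \domleq \lambda$ and $\lambda \domleq \cols{A}^t$ together with the two coefficient-$1$ claims, and then read off the consequence at the end. I would first observe that the two bounds are interchanged by transposition: the standard involution $\omega$ satisfies $\omega(s_A) = s_{A^t}$ and $\omega(s_\lambda) = s_{\lambda^t}$, so the coefficient of $s_\lambda$ in $s_A$ equals that of $s_{\lambda^t}$ in $s_{A^t}$. Since $\cols{A^t} = \rows{A}$ and transposition reverses dominance order, the lower bound for $A$ is precisely the upper bound for $A^t$. Hence it suffices to prove $\lambda \domleq \cols{A}^t$, with $s_{\cols{A}^t}$ occurring with coefficient $1$, and then transpose.

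For the upper bound I would pass through the monomial expansion $s_A = \sum_\mu K_{A,\mu} m_\mu$, where $K_{A,\mu}$ counts semistandard Young tableaux (SSYT) of shape $A$ and content $\mu$. The key combinatorial estimate is that every such content satisfies $\mu \domleq \cols{A}^t$. This follows from a column count: in an SSYT the entries strictly increase down each column, so a column of length $\ell$ contains at most $\min(\ell,k)$ entries drawn from any fixed set of $k$ values. Taking that set to be the $k$ most frequent values gives $\mu_1 + \cdots + \mu_k \le \sum_{\text{columns}} \min(\ell,k)$, and the right-hand side equals $(\cols{A}^t)_1 + \cdots + (\cols{A}^t)_k$ because $\cols{A}^t$ is the transpose of the partition of column lengths. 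Thus $\mu \domleq \cols{A}^t$ for every content appearing in $s_A$.

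To transfer this from contents to the Schur support I would use the triangularity of the Kostka numbers, namely $K_{\lambda\mu} > 0$ iff $\mu \domleq \lambda$, and $K_{\lambda\lambda} = 1$. Writing $s_A = \sum_\lambda c_A^\lambda s_\lambda$, if $\lambda \in \ssupp{A}$ then expanding the Schur functions into monomials gives $K_{A,\lambda} \ge c_A^\lambda\, K_{\lambda\lambda} > 0$, so $\lambda$ is a genuine content and the estimate above yields $\lambda \domleq \cols{A}^t$. For the coefficient-$1$ claim I would exhibit the \emph{depth filling}, placing in each box its position from the top of its own column; because the inner shape is a partition this filling is semistandard (the rows are weakly increasing precisely because $\mu^t$ is weakly decreasing), and it has content exactly $\cols{A}^t$, so $\cols{A}^t \in \ssupp{A}$. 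Comparing the two expansions and invoking triangularity once more, any $\lambda$ contributing to $K_{A,\cols{A}^t}$ must satisfy both $\lambda \domleq \cols{A}^t$ and $\cols{A}^t \domleq \lambda$, forcing $\lambda = \cols{A}^t$; hence $c_A^{\cols{A}^t} = K_{A,\cols{A}^t}$, which I would show is $1$ by arguing the depth filling is the \emph{unique} SSYT of that content — sharpness of the column estimate at every $k$ forces each column to read $1,2,3,\dots$ from the top.

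Finally, the consequence is immediate: since $\rows{B}$ and $\cols{B}^t$ both lie in $\ssupp{B} \subseteq \ssupp{A}$ (each occurring with coefficient $1$ by the case just proved for $B$), applying the two bounds for $A$ gives $\rows{A} \domleq \rows{B}$ and $\cols{B}^t \domleq \cols{A}^t$, the latter transposing to $\cols{A} \domleq \cols{B}$. I expect the main obstacle to be the coefficient-$1$ assertions rather than the inequalities themselves: establishing that the extreme partitions $\rows{A}$ and $\cols{A}^t$ actually occur, and occur exactly once, requires both producing the explicit extremal tableau and proving its uniqueness from the sharpness of the column count, and it is this uniqueness step that needs the most care.
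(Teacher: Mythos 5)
Your proof is correct, but there is nothing in the paper to compare it against line by line: the paper states this proposition as a classical result and defers its proof to \cite{McN08}, with earlier proofs in \cite{Lam77, Zab}. The natural in-paper comparison is with Proposition~\ref{pro:f_extreme_fillings}, the quasisymmetric analogue that the paper does prove. Your route is the classical symmetric-function argument: bound every SSYT content $\mu$ of shape $A$ by $\mu \domleq \cols{A}^t$ via the column count $\sum_{c}\min(\ell_c,k)=(\cols{A}^t)_1+\cdots+(\cols{A}^t)_k$, transfer the bound from contents to the Schur support by Kostka triangularity, and obtain both coefficient-$1$ claims from the depth filling together with a uniqueness argument, the lower bound coming for free from $\omega$ and transposition (your reduction is valid: $\cols{A^t}=\rows{A}$, transposition reverses dominance on partitions of equal size, and $\omega$ exchanges the relevant coefficients). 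The paper's proof of the $F$-analogue instead works with standard Young tableaux and descent compositions, bounding partial sums of $\rows{\alpha}$ by decomposing runs into disjoint horizontal (resp.\ vertical) strips, with sharpness achieved by the same greedy column-by-column filling of which your depth filling is the semistandard version. What your approach buys is self-containedness from classical facts (Kostka triangularity, Littlewood--Richardson positivity); what the paper's strip-based approach buys is that it survives the passage to the $F$-basis, where there is no monomial triangularity to lean on.

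Two small points. First, your inequality $K_{A,\lambda} \ge c_A^\lambda K_{\lambda\lambda}$ silently uses that all the coefficients $c_A^\nu$ are nonnegative, i.e., Schur-positivity of skew Schur functions; this is standard but should be stated, since the proposition itself is about Schur expansions of skew shapes. Second, immediately after exhibiting the depth filling you assert ``so $\cols{A}^t \in \ssupp{A}$''; at that point you only know $K_{A,\cols{A}^t} \ge 1$, which is membership in the \emph{monomial} support, not the Schur support. Your next sentence (the two-sided dominance squeeze giving $c_A^{\cols{A}^t} = K_{A,\cols{A}^t}$) is what actually justifies membership, so the argument closes, but the logical order should be: the depth filling gives $K_{A,\cols{A}^t}\ge 1$, the squeeze gives $c_A^{\cols{A}^t}=K_{A,\cols{A}^t}$, and uniqueness of the depth filling gives the common value $1$.
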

 
Reiner, Shaw and van Willigenburg generalized $\rows{A}$ and $\cols{A}$ using the following key definition.  

\begin{definition}
Let $A$ be a skew shape with $r$ rows.  For $i = 1, \ldots, r-k+1$, define $\overlap{k}{i}$
to be the number of columns occupied in common by rows $i, i+1, \ldots, i+k-1$.  
Then $\rowsk{k}{A}$ is defined to be the weakly decreasing rearrangement of 
\[
(\overlap{k}{1}, \overlap{k}{2}, \ldots, \overlap{k}{r-k+1}).
\]
Similarly, we define $\colsk{k}{A}$ by looking at the overlap among the columns of $A$.  
\end{definition}

In particular, note that $\rowsk{1}{A} = \rows{A}$ and $\colsk{1}{A} = \cols{A}$.  

\begin{example}
Let $A=553111/31$ as shown here.
\[
\begin{tikzpicture}[scale=0.4]
\draw (-1.3,3) node {$A=$};
\draw[thick] (0,0) -- (0,4) -- (1,4) -- (1,5) -- (3,5) -- (3,6) -- (5,6) -- (5,4) -- (3,4) -- (3,3) -- (1,3) -- (1,0) -- cycle;
\draw (0,1) -- (1,1);
\draw (0,2) -- (1,2);
\draw (0,3) -- (1,3);
\draw (1,4) -- (3,4);
\draw (3,5) -- (5,5);
\draw (1,3) -- (1,4);
\draw (2,3) -- (2,5);
\draw (3,4) -- (3,5);
\draw (4,4) -- (4,6);
\end{tikzpicture}
\]
We have that 
$\rowsk{1}{A} = 432111$, $\rowsk{2}{A}=22111$, $\rowsk{3}{A}=11$, $\rowsk{4}{A}=1$, and
$\rowsk{i}{A} = \emptyset$ otherwise.  In addition,
$\colsk{1}{A} = 42222$, $\colsk{2}{A}=2211$, $\colsk{3}{A}=111$, $\colsk{4}{A}=1$, and
$\colsk{i}{A} = \emptyset$ otherwise.
\end{example}

It turns out that knowledge of $\rowsk{k}{A}$ for all $k$ is equivalent to knowledge of $\colsk{\ell}{A}$ 
for all $\ell$.  To show this, the natural concept of $\rects{k}{\ell}{A}$ was introduced in \cite{RSvW07}. Here is their result.  

\begin{proposition}[\cite{RSvW07}]\label{pro:rects}
Given a skew shape $A$, consider the doubly-indexed array 
\[
(\rects{k}{\ell}{A})_{k,\ell \geq 1}
\]
where $\rects{k}{\ell}{A}$ is defined to be the number of $k \times \ell$ 
rectangular subdiagrams contained inside
$A$.  Any one of the three forms of data
\[
(\rowsk{k}{A})_{k \geq 1},\ \  (\colsk{\ell}{A})_{\ell \geq 1}, \ \  (\rects{k}{\ell}{A})_{k, \ell \geq 1}
\]
on $A$ determines the other two uniquely.  
\end{proposition}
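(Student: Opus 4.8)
The plan is to relate the rectangle counts directly to the overlaps $\overlap{k}{i}$ and then invert. The one geometric fact needed is that, because each row of a skew shape is a contiguous run of boxes, the columns occupied in common by rows $i, i+1, \ldots, i+k-1$ form an \emph{interval}, namely the intersection of those $k$ rows viewed as intervals, and this interval has length $\overlap{k}{i}$. Since any $k \times \ell$ rectangular subdiagram occupies $k$ consecutive rows (say with topmost row $i$) together with $\ell$ consecutive columns lying inside that common interval, the number of such rectangles with top row $i$ is $\max(0, \overlap{k}{i} - \ell + 1)$. Summing over $i$ and writing $\rho = \rowsk{k}{A}$ for the partition with parts $\rho_1 \geq \rho_2 \geq \cdots$ (the weakly decreasing rearrangement of the $\overlap{k}{i}$), I obtain
\[
\rects{k}{\ell}{A} = \sum_{i=1}^{r-k+1} \max\!\left(0,\, \overlap{k}{i} - \ell + 1\right) = \sum_{j \geq 1} \max\!\left(0,\, \rho_j - \ell + 1\right),
\]
the second equality holding because each summand depends only on the multiset of overlaps.

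This formula shows at once that $(\rowsk{k}{A})_{k \geq 1}$ determines $(\rects{k}{\ell}{A})_{k,\ell \geq 1}$. For the converse, I would fix $k$ and take first differences in $\ell$, which collapses the truncated sum to a simple count:
\[
\rects{k}{\ell}{A} - \rects{k}{\ell+1}{A} = \#\{\, j : \rho_j \geq \ell \,\}.
\]
The right-hand side is exactly the $\ell$th part of the transpose partition $\rho^t$, so the data $(\rects{k}{\ell}{A})_{\ell \geq 1}$ recovers $\rho^t$ and hence $\rho = \rowsk{k}{A}$. Letting $k$ range then shows $(\rects{k}{\ell}{A})_{k,\ell \geq 1}$ determines $(\rowsk{k}{A})_{k \geq 1}$, giving the equivalence of the first and third forms of data.

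To incorporate the columns, I would appeal to transpose symmetry. A $k \times \ell$ rectangle in $A$ is an $\ell \times k$ rectangle in $A^t$, so $\rects{k}{\ell}{A} = \rects{\ell}{k}{A^t}$; likewise $\colsk{\ell}{A} = \rowsk{\ell}{A^t}$, since the columns of $A$ are the rows of $A^t$. Applying the equivalence already proved to the skew shape $A^t$ therefore shows that $(\colsk{\ell}{A})_{\ell \geq 1}$ and $(\rects{k}{\ell}{A})_{k,\ell \geq 1}$ determine each other. Chaining the two equivalences through the shared rectangle data completes the argument.

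The substantive step is the opening observation that the common columns of consecutive rows form a single interval; this is what makes the rectangle count equal to $\max(0, \overlap{k}{i} - \ell + 1)$ rather than something more complicated. Everything after that—summation, first differences, and transposition—is formal, with the only bookkeeping being that empty overlaps (those with $\overlap{k}{i} < \ell$) contribute nothing, which the truncation $\max(0, \cdot)$ handles automatically.
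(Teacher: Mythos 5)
Your proof is correct. Note that the paper does not actually prove this proposition---it is quoted from \cite{RSvW07} without proof---so there is no internal argument to compare against; your route is the standard one (and essentially that of the cited source): the key geometric observation that the columns common to $k$ consecutive rows form a single interval yields $\rects{k}{\ell}{A}=\sum_{j}\max(0,\rho_j-\ell+1)$ with $\rho=\rowsk{k}{A}$, first differences in $\ell$ recover the conjugate partition $\rho^t$ and hence $\rho$, and transpose symmetry ($\colsk{\ell}{A}=\rowsk{\ell}{A^t}$, $\rects{k}{\ell}{A}=\rects{\ell}{k}{A^t}$) transfers the equivalence to the column data. All steps, including the truncation handling overlaps shorter than $\ell$ and the chaining of the two equivalences through the rectangle counts, check out.
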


The main necessary condition from \cite{RSvW07} for skew Schur equality is the following.

\begin{theorem}[\cite{RSvW07}]\label{thm:rsvw}
Let $A$ and $B$ be skew shapes.  If $s_A = s_B$, then
the following three equivalent conditions are true:
\begin{itemize}
\item $\rowsk{k}{A} = \rowsk{k}{B}$ for all $k$;
\item $\colsk{\ell}{A} = \colsk{\ell}{B}$ for all $\ell$;
\item $\rects{k}{\ell}{A} = \rects{k}{\ell}{B}$ for all $k, \ell$.  
\end{itemize}
\end{theorem}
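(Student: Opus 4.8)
The plan is to prove only the rectangle version, since by Proposition~\ref{pro:rects} the three displayed conditions are mutually equivalent; given the hypothesis $s_A = s_B$, it then suffices to show that each rectangle count $\rects{k}{\ell}{A}$ is an invariant of the symmetric function $s_A$, i.e.\ can be recovered from $s_A$ with no reference to a particular skew shape realizing it. The two extreme families come for free. For $k=1$, the number $\rects{1}{\ell}{A}$ is a function of $\rows{A}=\rowsk{1}{A}$, and by Proposition~\ref{pro:extreme_fillings} the partition $\rows{A}$ is the dominance-minimal element of $\ssupp{A}$, hence is read off from $s_A$. Dually, $\rects{k}{1}{A}$ depends only on $\cols{A}$; since $\omega s_A = s_{A^t}$ and $\cols{A} = \rows{A^t}$, applying Proposition~\ref{pro:extreme_fillings} to $A^t$ recovers $\cols{A}$ from $s_A$ as well.

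For the general case I would first record the bookkeeping identity
\[
\rects{k}{\ell}{A} \;=\; \sum_{i} \bigl(\overlap{k}{i} - \ell + 1\bigr)^{+} \;=\; \sum_{p \in \rowsk{k}{A}} (p - \ell + 1)^{+},
\]
where $x^{+}=\max(x,0)$. This holds because the columns occupied in common by rows $i, i+1, \ldots, i+k-1$ form a single consecutive block of length $\overlap{k}{i}$ (an intersection of intervals is an interval), inside which the $k\times\ell$ rectangles using exactly those $k$ rows number $(\overlap{k}{i}-\ell+1)^{+}$; summing over the top row index $i$ counts each rectangle once. Consequently, for each fixed $k$ the family $(\rects{k}{\ell}{A})_{\ell\ge 1}$ and the partition $\rowsk{k}{A}$ determine one another — indeed $\rowsk{k}{A}$ is recovered by second differencing in $\ell$ — so it is enough to extract the numbers $\rects{k}{\ell}{A}$ from $s_A$ for all $k$ and $\ell$. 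The heart of the argument is therefore an extraction result: a linear functional on symmetric functions whose value on $s_A$ returns $\rects{k}{\ell}{A}$. The natural candidates, all manifestly invariants of $s_A$, are the semistandard coefficients $\langle s_A, h_\mu\rangle$ (the number of SSYT of shape $A$ and content $\mu$) and the principal specializations $s_A(1^m)$. The guiding heuristic is that a $k\times\ell$ rectangle is the most tightly packed subconfiguration occupying $k$ rows and $\ell$ columns, so its count should surface as an extremal (dominance-)coefficient once the content $\mu$ is chosen to force such packing, exactly generalizing the extreme-filling mechanism behind Proposition~\ref{pro:extreme_fillings}.

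The step I expect to be the main obstacle is making this extraction exact in the genuinely two-dimensional range $k,\ell\ge 2$, where neither the row read-off nor the transpose trick applies. Producing a content $\mu=\mu(k,\ell)$ for which $\langle s_A, h_\mu\rangle$ sees $k\times\ell$ rectangles and nothing else requires controlling two sources of error: overcounting, when a single rectangular block admits several fillings or when strictly larger overlaps also contribute, and the interaction between distinct bands of consecutive rows. I would address the first by an inclusion–exclusion across $\ell$ organized by the identity above (whose right-hand side is already the clean second-difference repackaging), and the second by showing that at the chosen extremal content the only surviving fillings are the rigid ones in which each repeated value tiles an honest rectangular block, so that distinct bands contribute independently and Lemma~\ref{lem:dom_containment} certifies that the relevant coefficient is the extremal one. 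Once the extraction is established for every $k$ and $\ell$, the hypothesis $s_A=s_B$ immediately forces $\rects{k}{\ell}{A}=\rects{k}{\ell}{B}$ for all $k,\ell$, and Proposition~\ref{pro:rects} upgrades this to equality of the $\rowsk{k}{\cdot}$ and $\colsk{\ell}{\cdot}$ data, completing the proof of Theorem~\ref{thm:rsvw}.
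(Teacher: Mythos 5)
Your reduction steps are sound as far as they go: by Proposition~\ref{pro:rects} it suffices to treat the rectangle counts; your identity $\rects{k}{\ell}{A}=\sum_{p\in\rowsk{k}{A}}(p-\ell+1)^{+}$ is correct (each row of a skew shape occupies an interval of columns, so the columns common to $k$ consecutive rows form an interval as well, and second differencing in $\ell$ recovers $\rowsk{k}{A}$); and the boundary cases are genuinely recoverable from $s_A$, since Proposition~\ref{pro:extreme_fillings} exhibits $\rows{A}$ as the dominance-minimum of $\ssupp{A}$, with the transpose/$\omega$ trick handling $\cols{A}$. But the proof stops exactly where the theorem begins. For $k,\ell\geq 2$ you never produce the claimed extraction: no content $\mu(k,\ell)$ is specified, and no argument is given that $\langle s_A,h_\mu\rangle$ --- or any other concretely named invariant of $s_A$ --- equals, or determines, $\rects{k}{\ell}{A}$. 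The ``rigidity'' assertion (that at a suitably extremal content the only surviving SSYT are those in which each repeated value tiles an honest rectangular block) is precisely the kind of statement that requires proof, and it is not supported by anything you cite: equal entries of an SSYT form a horizontal strip, which can be distributed across $A$ in many non-rectangular ways, and Lemma~\ref{lem:dom_containment} (a statement about sorting two coordinatewise-comparable sequences into partitions) does nothing to force such packing. Worse, asserting abstractly that ``some extraction exists'' is logically equivalent to the theorem itself --- that the overlaps are a function of the symmetric function $s_A$ is exactly what is to be proved --- so without an explicit construction the argument is circular.

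For comparison, the paper does not argue this way at all: it obtains Theorem~\ref{thm:rsvw} as part of Corollary~\ref{cor:equality}, by a support-theoretic route. From $s_A=s_B$ one gets $\ssupp{A}=\ssupp{B}$, hence $\fsupp{A}=\fsupp{B}$ by Theorem~\ref{thm:fexpansion}; applying Theorem~\ref{thm:fsupport} in both directions yields $\rowsk{k}{A}\domleq\rowsk{k}{B}$ and $\rowsk{k}{B}\domleq\rowsk{k}{A}$ for all $k$, hence equality, and Proposition~\ref{pro:rects} converts this into the $\mathrm{cols}$ and $\mathrm{rects}$ statements. The work your ``extraction'' step would have to do is done there by explicit SYT constructions: the extreme fillings of Proposition~\ref{pro:f_extreme_fillings} combined with the trim operation of Lemma~\ref{lem:trim} isolate $\colsk{\ell}{B}$ inside the $F$-support one value of $\ell$ at a time. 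If you want to salvage your approach, that is the missing ingredient: an explicit family of fillings or coefficients that pins down $\rects{k}{\ell}{A}$ for $k,\ell\geq 2$.
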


There are two results from \cite{McN08} relevant to this section.  The first extends Proposition~\ref{pro:rects} to the setting of inequalities.

\begin{proposition}\label{pro:rectsineq}
Let $A$ and $B$ be skew shapes.  Then the following conditions are equivalent:
\begin{itemize}
\item $\rowsk{k}{A} \domleq \rowsk{k}{B}$ for all $k$;
\item $\colsk{\ell}{A} \domleq \colsk{\ell}{B}$ for all $\ell$;
\item $\rects{k}{\ell}{A} \leq \rects{k}{\ell}{B}$ for all $k, \ell$.  
\end{itemize}
\end{proposition}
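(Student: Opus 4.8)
The plan is to reduce the entire proposition to a single elementary statement about the extended dominance order, after first recording a clean formula that expresses each $\rects{k}{\ell}{A}$ directly in terms of the partition $\rowsk{k}{A}$. The key formula I would establish is
\[
\rects{k}{\ell}{A} = \sum_{i} \max\bigl((\rowsk{k}{A})_i - \ell + 1,\ 0\bigr).
\]
To see this, note that any $k \times \ell$ rectangular subdiagram of $A$ occupies some block of $k$ consecutive rows $i, i+1, \ldots, i+k-1$, and that the columns these rows occupy in common form a \emph{contiguous} interval — being an intersection of the contiguous column-sets of the individual rows — of size exactly $\overlap{k}{i}$. Within such an interval there are precisely $\max(\overlap{k}{i} - \ell + 1, 0)$ choices of $\ell$ consecutive columns, so summing over $i$ and using that $\rowsk{k}{A}$ is merely the sorted rearrangement of the $\overlap{k}{i}$ (with $x \mapsto \max(x-\ell+1,0)$ applied termwise) yields the identity. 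Transposing $A$ interchanges rows with columns and $k$ with $\ell$; since $\colsk{\ell}{A} = \rowsk{\ell}{A^t}$ and $\rects{k}{\ell}{A} = \rects{\ell}{k}{A^t}$, the same identity gives the companion formula $\rects{k}{\ell}{A} = \sum_j \max((\colsk{\ell}{A})_j - k + 1, 0)$.

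The crux is then the following lemma, which I would isolate and prove directly: for partitions $\lambda$ and $\mu$,
\[
\lambda \domleq \mu \quad\Longleftrightarrow\quad \sum_i \max(\lambda_i - m,\,0) \leq \sum_i \max(\mu_i - m,\,0) \ \text{ for all integers } m \geq 0.
\]
Both directions follow by choosing the threshold $m$ appropriately, writing $S_k(\nu) = \nu_1 + \cdots + \nu_k$ for the $k$-th partial sum. For the forward direction, given $m$ let $k$ be the number of parts of $\lambda$ exceeding $m$; then the left-hand sum equals $S_k(\lambda) - km$, and the chain $S_k(\lambda) - km \leq S_k(\mu) - km \leq \sum_i \max(\mu_i - m,0)$ uses dominance together with $(x)_+ \geq x$. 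For the reverse direction, taking $m = \lambda_k$ gives $\sum_i \max(\lambda_i - \lambda_k,0) = S_k(\lambda) - k\lambda_k$ exactly, and the same two inequalities run in reverse to recover $S_k(\lambda) \leq S_k(\mu)$. I would also observe that the extended dominance of our definition coincides with ordinary dominance after padding the shorter partition with zeros, since the partial-sum inequalities for indices beyond the length of $\lambda$ hold automatically; this lets me apply the lemma without worrying about the differing sizes of $\rowsk{k}{A}$ and $\rowsk{k}{B}$.

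Finally I would assemble the three equivalences. Fixing $k$ and applying the lemma with $\lambda = \rowsk{k}{A}$, $\mu = \rowsk{k}{B}$, and $m = \ell - 1$, the first formula shows that $\rowsk{k}{A} \domleq \rowsk{k}{B}$ is equivalent to $\rects{k}{\ell}{A} \leq \rects{k}{\ell}{B}$ holding for all $\ell$; ranging over $k$ yields the equivalence of the first and third bulleted conditions. The companion formula does the same job with the roles of $k$ and $\ell$ (equivalently rows and columns) interchanged, giving the equivalence of the second and third conditions, and hence of all three. I expect the main obstacle to be the lemma itself — specifically, getting the threshold choices and the direction of each inequality exactly right — whereas the rectangle-counting identity, once the small contiguity observation is in hand, is essentially the explicit, inequality-ready refinement of the counting already packaged in Proposition~\ref{pro:rects}.
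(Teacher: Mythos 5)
Your overall architecture is sound, and for what it's worth the paper itself gives no proof of this proposition at all (it is quoted from \cite{McN08}), so your argument stands or falls on its own. The rectangle-counting identity $\rects{k}{\ell}{A} = \sum_i \max\bigl((\rowsk{k}{A})_i - \ell + 1, 0\bigr)$ is correct (rows of a skew shape occupy contiguous column intervals, so the columns common to $k$ consecutive rows form an interval), the transpose argument for the column version is fine, the padding remark about extended dominance is right, the assembly of the three equivalences is correct, and the forward direction of your key lemma is proved correctly. The lemma itself is also true.

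The gap is in the reverse direction of the lemma, precisely at the spot you flagged as delicate: $m = \lambda_k$ is the wrong threshold, and ``the same two inequalities run in reverse'' do not close the chain. With $m = \lambda_k$ the hypothesis gives $S_k(\lambda) - k\lambda_k = \sum_i \max(\lambda_i - \lambda_k, 0) \leq \sum_i \max(\mu_i - \lambda_k, 0)$, but to reach $S_k(\lambda) \leq S_k(\mu)$ you would then need $\sum_i \max(\mu_i - \lambda_k, 0) \leq S_k(\mu) - k\lambda_k$, which is your inequality $(x)_+ \geq x$ \emph{reversed}, and it is false in general: take $\lambda = (1)$, $\mu = (3,3)$, $k = 1$, so $m = 1$; then $\sum_i \max(\mu_i - 1, 0) = 4$ while $S_1(\mu) - 1 = 2$. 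The repair is to take $m = \mu_k$ instead (with $\mu_k = 0$ when $k$ exceeds the length of $\mu$): then the exact computation lands on the $\mu$ side, $\sum_i \max(\mu_i - \mu_k, 0) = S_k(\mu) - k\mu_k$, the slack inequality lands on the $\lambda$ side, $\sum_i \max(\lambda_i - \mu_k, 0) \geq \sum_{i \leq k} (\lambda_i - \mu_k) = S_k(\lambda) - k\mu_k$, and the chain $S_k(\lambda) - k\mu_k \leq \sum_i \max(\lambda_i - \mu_k, 0) \leq \sum_i \max(\mu_i - \mu_k, 0) = S_k(\mu) - k\mu_k$ now runs in the right direction. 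Equivalently, and more memorably: $S_k(\nu) = \min_{m \geq 0}\bigl\{\sum_i \max(\nu_i - m, 0) + km\bigr\}$, with the minimum attained at $m = \nu_k$, so a termwise inequality between the bracketed expressions passes to the minima. With that one-line fix your proof is complete.
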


The second result from \cite{McN08} is the corresponding analogue of Theorem~\ref{thm:rsvw}.

\begin{theorem}\label{thm:combine}
Let $A$ and $B$ be skew shapes.
If $s_A - s_B$ is Schur-positive, 
or if $A$ and $B$ satisfy the weaker condition that $\ssupp{A} \supseteq \ssupp{B}$, then
the following three equivalent conditions are true:
\begin{itemize}
\item $\rowsk{k}{A} \domleq \rowsk{k}{B}$ for all $k$;
\item $\colsk{\ell}{A} \domleq \colsk{\ell}{B}$ for all $\ell$;
\item $\rects{k}{\ell}{A} \leq \rects{k}{\ell}{B}$ for all $k, \ell$.  
\end{itemize}
\end{theorem}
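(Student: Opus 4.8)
The plan is to reduce everything to a single statement about Schur support and then bootstrap the known $k=1$ case. First I would dispose of the equivalences and of the Schur-positivity hypothesis. The three bulleted conditions are equivalent by Proposition~\ref{pro:rectsineq}, so it suffices to establish any one of them, and I would aim for $\rowsk{k}{A} \domleq \rowsk{k}{B}$ for all $k$. If $s_A - s_B$ is Schur-positive, then every $s_\lambda$ occurring in $s_B$ occurs in $s_A$ with at least as large a coefficient, so in particular $\ssupp{A} \supseteq \ssupp{B}$; hence it is enough to prove the theorem under the weaker hypothesis $\ssupp{A}\supseteq\ssupp{B}$. Finally, since $\omega s_A = s_{A^t}$ and conjugation reverses dominance, the column statement is the transpose of the row statement, so I need only treat rows.

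The case $k=1$ is immediate from Proposition~\ref{pro:extreme_fillings}: because $\rows{B}\in\ssupp{B}\subseteq\ssupp{A}$ and $\rows{A}$ is the dominance-minimal element of $\ssupp{A}$, we obtain $\rowsk{1}{A}=\rows{A}\domleq\rows{B}=\rowsk{1}{B}$, and dually $\cols{A}\domleq\cols{B}$. The entire difficulty therefore lies in passing from $k=1$ to arbitrary $k$, i.e.\ in extracting the higher row-overlap partitions from the support \emph{set}. My strategy is to realize each $\rowsk{k}{A}$, equivalently (via Proposition~\ref{pro:rects}) the rectangle counts $\rects{k}{\ell}{A}$, as a monotone function of $\ssupp{A}$, and then to locate these counts using extremal elements of the support of auxiliary skew shapes built from $A$. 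A useful structural tool is that Schur-support containment is preserved by any operation that introduces no cancellation: if $\ssupp{A}\supseteq\ssupp{B}$ and $g$ is Schur-positive, then $\ssupp{s_A\,g}\supseteq\ssupp{s_B\,g}$, since the support of a product of Schur-positive functions is the union of the supports of the factors. Taking $g=s_C$ and using $s_A s_C = s_{A\cdot C}$ for the disconnected skew shape $A\cdot C$ (with $C$ placed strictly northeast of $A$) keeps us inside the world of skew shapes, where Proposition~\ref{pro:extreme_fillings} still applies.

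The main obstacle is exactly this extraction step. Disjoint-union products, and the dominance-extremal elements they feed into Proposition~\ref{pro:extreme_fillings}, only ever detect the $k=1$ data: a disconnected union creates no new multi-row overlaps, so $\rowsk{k}{A\cdot C}$ is merely the sorted merge of $\rowsk{k}{A}$ and $\rowsk{k}{C}$ once zero overlaps are discarded. Thus the heart of the proof must be a genuinely finer argument that reads the $k\ge 2$ overlaps off of \emph{intermediate} support elements rather than extremal ones. Concretely, I would fix $k$ and a truncation level $\ell$ and try to produce, from $\ssupp{A}$, a support element that is forced to be present and whose dominance position records the partial sum $\sum_i \max(0,\overlap{k}{i}-\ell+1)=\rects{k}{\ell}{A}$. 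Monotonicity of this record under $\ssupp{A}\supseteq\ssupp{B}$ would then give $\rects{k}{\ell}{A}\le\rects{k}{\ell}{B}$ for all $k,\ell$, whence the theorem follows by Proposition~\ref{pro:rectsineq}. Carrying out this Littlewood--Richardson bookkeeping, namely showing that the relevant intermediate shapes always appear in the support and that their position encodes precisely the rectangle count, is where I expect the real work to lie.
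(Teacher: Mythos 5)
Your reductions are fine: collapsing the three bulleted conditions via Proposition~\ref{pro:rectsineq}, replacing Schur-positivity by the weaker hypothesis $\ssupp{A}\supseteq\ssupp{B}$, and settling $k=1$ from Proposition~\ref{pro:extreme_fillings} are all correct and match what any proof must do. But the theorem's entire content is the case $k\geq 2$, and there your proposal stops at a statement of intent. You correctly diagnose that disjoint-union products $A\cdot C$ cannot help (they create no new multi-row overlaps), and you then say you would ``try to produce'' a support element whose dominance position records $\rects{k}{\ell}{A}$, conceding that this ``Littlewood--Richardson bookkeeping'' is where the real work lies. That bookkeeping is not a routine verification one can defer; it is the theorem. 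As written, the proposal proves only the $k=1$ statement, which was already classical (Proposition~\ref{pro:extreme_fillings}), so there is a genuine gap.

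For comparison, here is the mechanism that actually closes this gap. In the present paper, Theorem~\ref{thm:combine} is obtained as an immediate corollary of the stronger Theorem~\ref{thm:fsupport}: by Theorem~\ref{thm:fexpansion}, $\ssupp{A}\supseteq\ssupp{B}$ implies $\fsupp{A}\supseteq\fsupp{B}$, and the $F$-support statement suffices. The proof of Theorem~\ref{thm:fsupport} extracts the higher overlaps exactly where you could not: the key identity is $\cols{\trim{\ell-1}{B}}=\colsk{\ell}{B}$ (Lemma~\ref{lem:trim}), where $\mathrm{trim}$ deletes the leftmost box of each row. One builds a specific SYT of shape $B$ in two stages --- first filling the $\ell-1$ leftmost boxes of every row by vertical strips (the ``least dominant'' start), then filling the remaining shape $\trim{\ell-1}{B}$ in the most dominant way, so that the tail of its descent composition is $\colsk{\ell}{B}^t$. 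Support containment forces an SYT of shape $A$ with the same descent composition; its initial entries must again form vertical strips occupying the left ends of rows, so deleting them leaves a shape $C\supseteq\trim{\ell-1}{A}$, and Proposition~\ref{pro:f_extreme_fillings}(a) together with Lemma~\ref{lem:dom_containment} yields $\cols{\trim{\ell-1}{A}}\domleq\cols{C}\domleq\colsk{\ell}{B}$, i.e.\ $\colsk{\ell}{A}\domleq\colsk{\ell}{B}$. The original proof in \cite{McN08} runs the analogous two-stage argument directly in the Schur basis with a column version of $\mathrm{trim}$. Your ``intermediate support element'' intuition is pointing at precisely this construction, but without the trim operation and the two-stage extremal filling, the extraction step you need has no substance.
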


A motivation behind \cite{McN08} was to determine easily testable conditions that would show that $s_A - s_B$ is not Schur-positive for certain skew shapes $A$ and $B$.  Theorem~\ref{thm:combine} provides such conditions, as demonstrated by the following example.

\begin{example}\label{exa:incomparable}
Let 
\[
\begin{tikzpicture}[scale=0.4]
\begin{scope}
\draw (-1.5,2) node {$A=$};
\draw[thick] (0,0) -- (0,3) -- (1,3) -- (1,4) -- (4,4) -- (4,3) -- (2,3) -- (2,2) -- (1,2) -- (1,0) -- cycle;
\draw (0,1) -- (1,1);
\draw (0,2) -- (1,2);
\draw (1,2) -- (1,3);
\draw (1,3) -- (2,3);
\draw (2,3) -- (2,4);
\draw (3,3) -- (3,4);
\end{scope}
\begin{scope}[xshift=60mm]
\draw (0,2) node {and};
\end{scope}
\begin{scope}[xshift=95mm]
\draw (-1.5,2) node {$B=$};
\draw[thick] (0,0) -- (0,1) -- (1,1) -- (1,3) -- (3,3) -- (3,4) -- (4,4) -- (4,2) -- (3,2) -- (3,1) -- (1,1) -- (1,0) -- cycle;
\draw (2,1) -- (2,3);
\draw (1,2) -- (3,2) -- (3,3) -- (4,3);
\draw (5,2) node {.};
\end{scope}
\end{tikzpicture}
\]
We see that
$\rowsk{2}{A} = 111$ and $\rowsk{2}{B} = 21$.  Thus we know that $s_B - s_A$ is 
not Schur-positive.  On the other hand, $\rowsk{3}{A} = 1$ while $\rowsk{3}{B} = \emptyset$, 
implying that $s_A - s_B$ is not Schur-positive.  Moreover, we can conclude that
$\ssupp{A}$ and $\ssupp{B}$ are incomparable under containment order.
\end{example}

\section{Main result}\label{sec:onedirection}

Our goal for this section is to state and prove our main result, and deduce relevant corollaries.  We begin immediately with the statement of our main result.  

\begin{theorem}\label{thm:fsupport}
Let $A$ and $B$ be skew shapes.
If $\fsupp{A} \supseteq \fsupp{B}$, then
the following three equivalent conditions are true:
\begin{itemize}
\item $\rowsk{k}{A} \domleq \rowsk{k}{B}$ for all $k$;
\item $\colsk{\ell}{A} \domleq \colsk{\ell}{B}$ for all $\ell$;
\item $\rects{k}{\ell}{A} \leq \rects{k}{\ell}{B}$ for all $k, \ell$.  
\end{itemize}
\end{theorem}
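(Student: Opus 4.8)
The plan is to work with the third of the three equivalent conditions, using Proposition~\ref{pro:rectsineq} to reduce the theorem to establishing the dominance relations $\rowsk{k}{A} \domleq \rowsk{k}{B}$ for every $k$ (equivalently $\colsk{\ell}{A} \domleq \colsk{\ell}{B}$ for every $\ell$). It is worth stressing at the outset that one cannot simply invoke Theorem~\ref{thm:combine}: the hypothesis $\fsupp{A} \supseteq \fsupp{B}$ does \emph{not} imply $\ssupp{A} \supseteq \ssupp{B}$ --- this failure is exactly the content of the strengthening, as witnessed by $A_1$ and $A_2$ of Example~\ref{exa:intro} --- so the argument must extract the overlap data directly from the $F$-support rather than first recovering Schur-support containment.

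First I would set up an $F$-support analogue of Proposition~\ref{pro:extreme_fillings} to handle the base case $k=\ell=1$. Filling $A$ along successive rows (left to right, top to bottom) produces a standard Young tableau with descent composition $\unsortedrows{A}$, and filling along successive columns produces another; both exhibit explicit members of $\fsupp{A}$. More usefully, expanding $s_A = \sum_\lambda c_\lambda s_\lambda$ with $c_\lambda \geq 0$ and applying Theorem~\ref{thm:fexpansion} to each $s_\lambda$ shows that $\fsupp{A} = \bigcup_{\lambda \in \ssupp{A}} \fsupp{\lambda}$, with no cancellation. Combining the Schur-support bound $\lambda \domleq \cols{A}^t$ of Proposition~\ref{pro:extreme_fillings} with the fact (provable by the same superstandard filling argument) that the sorted descent compositions of a straight shape $\lambda$ are dominated by $\lambda$, one gets that the dominance-largest element of $\{\rows{\alpha} : \alpha \in \fsupp{A}\}$ is exactly $\cols{A}^t$, attained because $\cols{A}^t \in \ssupp{A}$. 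Since the maximum over a set dominates the maximum over any subset, $\fsupp{A} \supseteq \fsupp{B}$ forces $\cols{A}^t \domgeq \cols{B}^t$, hence $\cols{A} \domleq \cols{B}$ after the order-reversing transpose. Complementing descent sets gives $\fsupp{A^t} \supseteq \fsupp{B^t}$, and applying the same maximum statement to the transposes (where $\cols{A^t} = \rows{A}$) yields $\rows{A} \domleq \rows{B}$, settling $k = 1$.

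The heart of the proof, and the step I expect to be the main obstacle, is the passage to general $k$. A single dominance-extreme composition records only $\rows{A}$ or $\cols{A}$; since distinct skew shapes can share these while differing in their higher overlaps, no one element of $\fsupp{A}$ can encode all of the $\rowsk{k}{A}$, so genuinely finer extremal data is needed. For each band of $k$ consecutive rows I would construct standard Young tableaux of $A$ whose descent compositions isolate the individual overlaps $\overlap{k}{i}$, verify that these compositions lie in $\fsupp{A}$, and --- the delicate part --- show they are extremal enough within the larger $F$-support that the containment $\fsupp{A} \supseteq \fsupp{B}$ transfers the corresponding bounds to $B$. Concretely, this amounts to lifting to the $F$-setting the $k$-detecting extreme elements of $\ssupp{A}$ that underlie Theorem~\ref{thm:combine}, and checking that their extremality is not destroyed by the extra compositions contributed by the non-extreme straight shapes appearing in $\ssupp{A}$.

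Finally, once componentwise control on the overlap values $\overlap{k}{i}$ is in hand, Lemma~\ref{lem:dom_containment} converts it into the dominance relation $\rowsk{k}{A} \domleq \rowsk{k}{B}$ between the sorted overlap partitions, and Proposition~\ref{pro:rectsineq} delivers the two remaining equivalent forms. The crux throughout is the general-$k$ extremality verification described in the preceding paragraph; the row/column base case and the final sorting step are comparatively routine.
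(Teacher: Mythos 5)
Your reduction via Proposition~\ref{pro:rectsineq} and your treatment of the case $k=1$ are sound: the argument that every $\alpha \in \fsupp{A}$ satisfies $\rows{\alpha} \domleq \cols{A}^t$ with the bound attained is essentially Proposition~\ref{pro:f_extreme_fillings}(a) (reached via the Schur expansion rather than a direct filling, which is fine), and passing to transposes handles $\rows{A} \domleq \rows{B}$. But the proof stops there. For general $k$ you give only a plan --- construct tableaux ``isolating'' the overlaps $\overlap{k}{i}$ and show they are ``extremal enough'' --- and you yourself flag this verification as an unresolved obstacle. That step is the entire content of the theorem beyond~\eqref{equ:suppf}, so what you have is a proof of the $k=1$ case only. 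Worse, the plan as stated points the construction in the wrong direction: you propose building special SYTx of shape $A$, but the hypothesis $\fsupp{A} \supseteq \fsupp{B}$ only converts tableaux of $B$ into tableaux of $A$; a special tableau of $A$ by itself says nothing about $B$. (Your $k=1$ argument evades this because a maximum over a subset is dominated by the maximum over the superset, but you have not produced any statistic whose maximum over the full $F$-support detects the higher overlaps, and it is doubtful one exists in that naive form.)

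The idea you are missing is a two-phase ``trim'' construction performed on $B$, which requires no per-band analysis and no extremality-within-the-support verification. Fix $\ell$ and build one SYT $T$ of shape $B$: first fill the $\ell-1$ leftmost boxes of every row using $\ell-1$ successive top-to-bottom vertical-strip passes (the least dominant start, as in Proposition~\ref{pro:f_extreme_fillings}(b)), consuming entries $1,\ldots,m$; the unfilled shape is exactly $\trim{\ell-1}{B}$, and you then fill it in the most dominant way of Proposition~\ref{pro:f_extreme_fillings}(a), so the tail of the descent composition is $\cols{\trim{\ell-1}{B}}^t = \colsk{\ell}{B}^t$ by Lemma~\ref{lem:trim}(b). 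The hypothesis now supplies an SYT $T'$ of shape $A$ with the same descent composition. The point is that the descent composition forces the entries $1,\ldots,m$ of $T'$ to form $\ell-1$ vertical strips occupying the left ends of whatever rows they meet, so the shape $C$ obtained by deleting them contains $\trim{\ell-1}{A}$, while the remaining entries form an SYT of $C$ with descent composition $\colsk{\ell}{B}^t$. Proposition~\ref{pro:f_extreme_fillings}(a) gives $\colsk{\ell}{B}^t \domleq \cols{C}^t$, hence $\cols{C} \domleq \colsk{\ell}{B}$ as both are partitions of $|B|-m$; Lemma~\ref{lem:dom_containment} applied to column lengths gives $\cols{\trim{\ell-1}{A}} \domleq \cols{C}$, and $\cols{\trim{\ell-1}{A}} = \colsk{\ell}{A}$ by Lemma~\ref{lem:trim}(b). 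Chaining these yields $\colsk{\ell}{A} \domleq \colsk{\ell}{B}$ for every $\ell$, and Proposition~\ref{pro:rectsineq} finishes the proof. Note that only one element of $\fsupp{B}$ is needed for each $\ell$: the hypothesis is used solely to match that single composition inside $A$, not to compare extremal elements of the two supports.
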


For example, applying this theorem in Example~\ref{exa:incomparable} shows that $\fsupp{A}$ and $\fsupp{B}$ are incomparable with respect to containment.  This is a strictly stronger deduction than being incomparable with respect to Schur support containment (cf.\ $A_1$ and $A_3$ from Example~\ref{exa:intro}.)  Moreover, Theorem~\ref{thm:fsupport} is more than just an incremental improvement of Theorem~\ref{thm:combine} since $\fsupp{A} \supseteq \fsupp{B}$ seems to be ``much closer'' to the overlap conditions than $\ssupp{A} \supseteq \ssupp{B}$.  We will make this assertion precise in Section~\ref{sec:mainconjecture} by giving evidence in favor of our conjecture that the converse of Theorem~\ref{thm:fsupport} is also true.  

\subsection{Consequences of the main result}

We postpone the proof until after we have given some consequences of Theorem~\ref{thm:fsupport}.  If $s_A - s_B$ is $F$-positive, then it is clearly the case that $\fsupp{A} \supseteq \fsupp{B}$, so we get the following corollary.

\begin{corollary}\label{cor:fpositive}
Let $A$ and $B$ be skew shapes.  If $s_A - s_B$ is $F$-positive then
the following three equivalent conditions are true:
\begin{itemize}
\item $\rowsk{k}{A} \domleq \rowsk{k}{B}$ for all $k$;
\item $\colsk{\ell}{A} \domleq \colsk{\ell}{B}$ for all $\ell$;
\item $\rects{k}{\ell}{A} \leq \rects{k}{\ell}{B}$ for all $k, \ell$.  
\end{itemize}
\end{corollary}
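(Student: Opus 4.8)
The plan is to read the corollary off Theorem~\ref{thm:fsupport}, so the only thing to check is that $F$-positivity of $s_A-s_B$ is a stronger hypothesis than $F$-support containment. Write $s_A=\sum_\alpha a_\alpha F_\alpha$ and $s_B=\sum_\alpha b_\alpha F_\alpha$. By Theorem~\ref{thm:fexpansion} both expansions have nonnegative coefficients, so $\fsupp{B}=\{\alpha:b_\alpha>0\}$. If $s_A-s_B$ is $F$-positive then $a_\alpha\geq b_\alpha$ for every composition $\alpha$; in particular $a_\alpha\geq b_\alpha>0$ whenever $\alpha\in\fsupp{B}$, so $\alpha\in\fsupp{A}$. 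Hence $\fsupp{A}\supseteq\fsupp{B}$, and the three conditions (equivalent by Proposition~\ref{pro:rectsineq}) follow immediately from Theorem~\ref{thm:fsupport}.

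Thus all of the content lives in Theorem~\ref{thm:fsupport} itself, and any genuine obstacle lies there; I sketch how I would attack it. By Proposition~\ref{pro:rectsineq} it suffices to prove a single one of the three conditions, and I would aim for $\colsk{\ell}{A}\domleq\colsk{\ell}{B}$ for all $\ell$. The case $\ell=1$ is clean and can be handled by passing to the coarser monomial support. Since $F_\alpha=\sum_\beta M_\beta$, the sum over all $\beta$ refining $\alpha$, the monomial support of $A$ consists of every composition refining some element of $\fsupp{A}$, so $\fsupp{A}\supseteq\fsupp{B}$ forces the monomial support of $A$ to contain that of $B$. Now $[M_\beta]s_A$ counts semistandard tableaux of shape $A$ and content $\beta$, so by Kostka positivity together with the fact (Proposition~\ref{pro:extreme_fillings}) that $\cols{A}^t$ is the dominance-maximum of $\ssupp{A}$, the monomial support of $A$ equals $\{\beta:\mathrm{sort}(\beta)\domleq\cols{A}^t\}$. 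Containment of monomial supports then gives $\cols{B}^t\domleq\cols{A}^t$, i.e.\ $\cols{A}\domleq\cols{B}$, which is exactly the $\ell=1$ case.

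The difficulty is that the monomial support records only the single partition $\cols{A}^t$, hence only the $\ell=1$ data: two shapes can share $\cols{A}$ and $\rows{A}$ yet differ in $\colsk{2}{\cdot}$, so the higher overlaps are invisible at the monomial level and must be extracted from the finer $F$-support. My plan for general $\ell$ is to produce, for each $\ell$, a certificate composition $\gamma\in\fsupp{B}$ — the descent composition of a carefully chosen standard Young tableau of $B$ that packs the $\ell$-column overlaps as tightly as possible — with the property that $\gamma\in\fsupp{C}$ forces the partial-sum inequalities defining $\colsk{\ell}{C}\domleq\colsk{\ell}{B}$; applying this with $C=A$ and using $\gamma\in\fsupp{B}\subseteq\fsupp{A}$ would finish the proof. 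I expect establishing this certificate property to be the main obstacle, because the naive invariants one would hope to read off the $F$-support misbehave: for instance $\rows{A}$ is in general neither the dominance-minimum nor the dominance-maximum of the sorted elements of $\fsupp{A}$. Indeed, in Example~\ref{exa:intro} the square $A_3$ has $\rows{A_3}=22$ while its sorted support also contains the smaller $211$ (coming from $F_{121}$), whereas $A_1$ has $\rows{A_1}=211$ while its support contains the larger $31$. I would therefore organize the extraction as an induction on $\ell$ via a trimming operation that peels off overlap layers and converts a statement about $\colsk{\ell}{\cdot}$ into one about ordinary column lengths of a reduced shape, the technical heart being to control how $\fsupp{\cdot}$ transforms under that trimming.
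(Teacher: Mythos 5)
Your first paragraph is exactly the paper's proof of Corollary~\ref{cor:fpositive}: $F$-positivity of $s_A-s_B$ together with the nonnegativity of the $F$-expansion of $s_B$ (Theorem~\ref{thm:fexpansion}) gives $\fsupp{A}\supseteq\fsupp{B}$, and the three equivalent conditions then follow from Theorem~\ref{thm:fsupport}, precisely as the paper argues. Your additional sketch of how to prove Theorem~\ref{thm:fsupport} itself is not needed for this statement, though it is worth noting that it parallels the paper's actual argument (a certificate descent composition for each $\ell$, extracted via the trim operation of Lemma~\ref{lem:trim}).
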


To see that Corollary~\ref{cor:fpositive} is not equivalent to Theorem~\ref{thm:fsupport}, let $A=A_1$ and $B=A_2$ from Example~\ref{exa:intro}.  Then the hypothesis of Theorem~\ref{thm:fsupport} holds but that of Corollary~\ref{cor:fpositive} does not. 

Next, by Theorem~\ref{thm:fexpansion}, we get that Theorem~\ref{thm:combine} is simply a consequence of Theorem~\ref{thm:fsupport} and Corollary~\ref{cor:fpositive}.  

The consequence involving equalities can be captured by the following statement, which includes the content of Theorem~\ref{thm:rsvw}.  

\begin{corollary}\label{cor:equality}
Let $A$ and $B$ be skew shapes.  If $s_A = s_B$ or $\ssupp{A}=\ssupp{B}$ or $\fsupp{A}=\fsupp{B}$, then the following three equivalent conditions are true:
\begin{itemize}
\item $\rowsk{k}{A} = \rowsk{k}{B}$ for all $k$;
\item $\colsk{\ell}{A} = \colsk{\ell}{B}$ for all $\ell$;
\item $\rects{k}{\ell}{A} = \rects{k}{\ell}{B}$ for all $k, \ell$.  
\end{itemize}
\end{corollary}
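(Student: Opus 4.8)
The plan is to derive Corollary~\ref{cor:equality} directly from Theorem~\ref{thm:fsupport} together with the fact (from Theorem~\ref{thm:fexpansion}) that Schur equality and equality of Schur supports both imply equality of $F$-supports. The key observation is that the final statement is about \emph{equalities}, whereas Theorem~\ref{thm:fsupport} gives a one-directional dominance conclusion from a one-directional support containment. So the strategy is to split each equality hypothesis into two opposite containments and apply Theorem~\ref{thm:fsupport} twice, once in each direction.

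First I would reduce all three hypotheses to the weakest one, namely $\fsupp{A} = \fsupp{B}$. If $s_A = s_B$, then trivially their $F$-expansions coincide, so $\fsupp{A} = \fsupp{B}$. If $\ssupp{A} = \ssupp{B}$, then since each $s_\lambda$ is $F$-positive by Theorem~\ref{thm:fexpansion}, the $F$-support of $s_A$ is the union of the $F$-supports of the $s_\lambda$ for $\lambda \in \ssupp{A}$ (no cancellation can occur because all coefficients are nonnegative), and likewise for $B$; hence $\fsupp{A} = \fsupp{B}$ again. Thus it suffices to prove the conclusion under the single assumption $\fsupp{A} = \fsupp{B}$.

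Next I would exploit the symmetry of this reduced hypothesis. The equality $\fsupp{A} = \fsupp{B}$ gives both $\fsupp{A} \supseteq \fsupp{B}$ and $\fsupp{B} \supseteq \fsupp{A}$. Applying Theorem~\ref{thm:fsupport} to the first containment yields $\rowsk{k}{A} \domleq \rowsk{k}{B}$ for all $k$, and applying it to the second (swapping the roles of $A$ and $B$) yields $\rowsk{k}{B} \domleq \rowsk{k}{A}$ for all $k$. Since $\domleq$ is a partial order (antisymmetric), these two inequalities force $\rowsk{k}{A} = \rowsk{k}{B}$ for every $k$. The equivalence of the three bulleted conditions is exactly the content of Proposition~\ref{pro:rects}, so establishing the row-overlap equalities immediately delivers the column-overlap and rectangle equalities as well.

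I do not anticipate a genuine obstacle here, as the corollary is essentially a formal bookkeeping consequence of the main theorem. The only point requiring a moment's care is the second reduction: one must confirm that equal Schur supports really do force equal $F$-supports, which relies crucially on $F$-positivity of Schur functions to rule out cancellation when the $F$-expansions are assembled from the constituent $s_\lambda$. Once that is noted, the antisymmetry argument is routine.
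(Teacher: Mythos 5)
Your proposal is correct and follows essentially the same route as the paper's own proof: reduce all three hypotheses to $\fsupp{A}=\fsupp{B}$ via Theorem~\ref{thm:fexpansion}, apply Theorem~\ref{thm:fsupport} to both containments $\fsupp{A}\supseteq\fsupp{B}$ and $\fsupp{B}\supseteq\fsupp{A}$, conclude equality of the $\rowsk{k}{\cdot}$ by antisymmetry of dominance, and invoke Proposition~\ref{pro:rects} for the remaining two conditions. Your spelled-out no-cancellation argument for why equal Schur supports force equal $F$-supports is just a more explicit version of the step the paper attributes to Theorem~\ref{thm:fexpansion}.
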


\begin{proof}
If $s_A = s_B$ then we have $\ssupp{A}=\ssupp{B}$ which, by Theorem~\ref{thm:fexpansion}, implies $\fsupp{A}=\fsupp{B}$.  By Theorem~\ref{thm:fsupport}, $\fsupp{A} \supseteq \fsupp{B}$ implies that $\rowsk{k}{A} \domleq \rowsk{k}{B}$ for all $k$.  Similarly, $\rowsk{k}{B} \domleq \rowsk{k}{A}$ for all $k$, and so $\rowsk{k}{A} = \rowsk{k}{B}$ for all $k$.  The remainder of the result now follows from Proposition~\ref{pro:rects}.
\end{proof}

\subsection{Proving the main result}

We now work towards a proof of Theorem~\ref{thm:fsupport}.  The overall approach will be much like that for the proof of \cite[Corollary~3.10]{McN08}, which is our Theorem~\ref{thm:combine}, but the details change because we are now working in the $F$-basis.  For example, the easiest inequality for us to show will be that $\colsk{\ell}{A} \domleq \colsk{\ell}{B}$, whereas the $\mathrm{rows}$ inequality was the one proved in \cite{McN08}.  

While we can determine $\cols{\alpha}$ for a composition $\alpha$ by constructing the relevant ribbon, it will be helpful for Proposition~\ref{pro:f_extreme_fillings}(b) below to have an equivalent way to obtain $\cols{\alpha}$.  

\begin{lemma}\label{lem:cols}
For a ribbon $R$ with $|R|=n$, let $\unsortedrows{R}$ (resp.\ $\unsortedcols{R}$) denote the (unsorted) composition of $n$ given by the row (resp.\ column) lengths of $R$ read from top to bottom (resp.\ right to left).  Then the subsets of $[n-1]$ corresponding to $\unsortedrows{R}$ and $\unsortedcols{R}$ are complements of each other.  

Consequently, for a composition $\alpha$ of $n$, to obtain $\cols{\alpha}$ from $\alpha$ follow this 4-step process: obtain the subset $S(\alpha)$ of $[n-1]$, take the complement $S(\alpha)^c$, then construct the corresponding composition $\comp{S(\alpha)^c}$ of $n$, and sort the result into weakly decreasing order.
\end{lemma}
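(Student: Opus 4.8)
The plan is to encode a ribbon $R$ with $n$ boxes as a two-letter word recording the steps of a single traversal of its cells, and then to read off both its row lengths and its column lengths from that one word. First I would fix the traversal of $R$ that begins at the box at the far (non-overlapping) end of the top row and proceeds box by box to the opposite end: within each row we move horizontally outward from the shared column, and between consecutive rows we take a single vertical step through the overlap column (recall that in a ribbon adjacent rows overlap in exactly one column, and that the lower row extends further out). This traversal visits all $n$ boxes in turn and so records a word $w = w_1 \cdots w_{n-1}$ of length $n-1$ in the alphabet $\{H,V\}$, where $w_i$ is $H$ or $V$ according to whether the move from the $i$th box to the $(i+1)$st is horizontal or vertical.

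Next I would observe that two consecutive boxes of the traversal lie in the same row exactly when the step between them is horizontal, so the rows of $R$, read top to bottom (the order in which the traversal first meets them), are precisely the maximal blocks of $w$ cut out by the vertical steps: a row of length $\ell$ contributes $\ell-1$ horizontal steps and is then closed off by a separating $V$. Tracking partial sums then shows that the set of positions $i$ with $w_i = V$ is exactly $S(\unsortedrows{R})$. Symmetrically, two consecutive boxes lie in the same column exactly when the step between them is vertical, so the columns of $R$, read in traversal order (which is right to left, since every horizontal move goes outward to the left), are the maximal blocks of $w$ cut out by the horizontal steps, and the set of positions $i$ with $w_i = H$ is exactly $S(\unsortedcols{R})$.

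Since every one of the $n-1$ steps is either horizontal or vertical, the position sets $\{i : w_i = V\}$ and $\{i : w_i = H\}$ partition $[n-1]$, whence $S(\unsortedrows{R})$ and $S(\unsortedcols{R})$ are complements of one another, which is the first assertion. The stated 4-step recipe for $\cols{\alpha}$ then follows by unwinding definitions: taking $R$ to be the ribbon whose row composition is $\alpha$, we have $S(\unsortedrows{R}) = S(\alpha)$, so the complementarity gives $S(\unsortedcols{R}) = S(\alpha)^c$, i.e.\ $\unsortedcols{R} = \comp{S(\alpha)^c}$; sorting $\unsortedcols{R}$ into weakly decreasing order is by definition $\cols{\alpha}$, matching the displayed process.

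The main obstacle is entirely one of orientation bookkeeping: I must pin down the traversal and its reading conventions so that rows and columns are read in the \emph{compatible} directions—top to bottom and right to left—that make the two step-position sets literally complementary subsets of the \emph{same} index set $[n-1]$, rather than reverses, shifts, or complements-of-reverses of one another. Once the traversal is fixed and the block-to-length correspondence is verified on the partial sums, the complementarity is immediate. I would sanity-check the conventions on the small case $\alpha = 1311$, where $S(\alpha) = \{1,4,5\}$ and $S(\alpha)^c = \{2,3\}$ give $\comp{\{2,3\}} = 213$ and hence $\cols{1311} = 321$, in agreement with the example in the text.
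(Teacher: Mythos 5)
Your proof is correct and takes essentially the same approach as the paper: both arguments traverse the ribbon from the top-right box to the bottom-left box and observe that each of the $n-1$ junctures is either a row-ending or a column-ending but never both, so the partial sums of row lengths (top to bottom) and of column lengths (right to left) partition $[n-1]$. Your $\{H,V\}$-word is just a notational repackaging of the paper's "highest-numbered box of its row or of its column" dichotomy, and the deduction of the 4-step recipe is identical.
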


\begin{proof}
Write the numbers $1, 2, \ldots, n$ in sequence from the top right box of $R$ down to the bottom left.  Every box numbered $i$ for $i<n$ is either the highest-numbered box of its row or of its column, and not both.  It is the highest-numbered box of its row (resp.\ column) if and only if $i$ is an element of the subset of $[n-1]$ corresponding to $\unsortedrows{R}$ (resp.\ $\unsortedcols{R}$).  The first assertion of the lemma follows.

The second assertion follows from the definition of $\cols{\alpha}$.  
\end{proof}

Our Proposition~\ref{pro:extreme_fillings} played a key role in the proofs of \cite{McN08}.  To prove Theorem~\ref{thm:fsupport}, we will need the following quasisymmetric analogue of Proposition~\ref{pro:extreme_fillings}.
Although we only need part (a) in this section, it makes sense to prove parts (a) and (b) together; we need (b) because we will use~\eqref{equ:suppf} in the proof of Theorem~\ref{thm:multfree}.

\begin{proposition}\label{pro:f_extreme_fillings}
Let $A$ and $B$ be skew shapes.  If $\alpha \in \fsupp{A}$ then
\begin{enumerate}
\item 
$\rows{\alpha} \domleq \cols{A}^t$,
\item 
$\cols{\alpha} \domleq \rows{A}^t$,
\end{enumerate}
and both inequalities are sharp.  Consequently, if $\fsupp{A} \supseteq \fsupp{B}$, then
\begin{equation}\label{equ:suppf}
\rows{A} \domleq \rows{B}  \mbox{\ \ and \ \ } \cols{A} \domleq \cols{B}.
\end{equation}
\end{proposition}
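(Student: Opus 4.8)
The plan is to combine Theorem~\ref{thm:fexpansion} with a direct cell-counting argument, proving part (a) head-on and deducing part (b) by transposition. By Theorem~\ref{thm:fexpansion}, $\alpha\in\fsupp{A}$ means $\alpha=\comp{T}$ for some standard Young tableau $T$ of shape $A$, and the parts of $\alpha$ are exactly the lengths of the maximal runs of consecutive entries $i,i+1,\ldots,j$ of $T$ that contain no descent. First I would record the key structural observation: within such a descent-free run the row indices weakly decrease as the entries increase, so any two cells of the run must lie in distinct columns (two cells in one column would force the larger entry to be strictly lower, hence a descent). Thus each descent-free run meets every column of $A$ in at most one cell.

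For part (a) I would bound the partial sums of $\rows{\alpha}$. Fix $k$ and take the $k$ longest descent-free runs, whose total size is $\sum_{j\le k}(\rows{\alpha})_j$. These runs are pairwise disjoint and each meets a given column $c$, of length $\ell_c$, in at most one cell, so together they occupy at most $\min(\ell_c,k)$ cells of column $c$; summing over columns gives $\sum_{j\le k}(\rows{\alpha})_j\le\sum_c\min(\ell_c,k)$. The right-hand side is precisely $\sum_{j\le k}(\cols{A}^t)_j$, by the conjugate partial-sum identity $\sum_{j\le k}\mu^t_j=\sum_i\min(\mu_i,k)$ applied to $\mu=\cols{A}$. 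As this holds for every $k$, we obtain $\rows{\alpha}\domleq\cols{A}^t$.

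For sharpness of (a) I would invoke Proposition~\ref{pro:extreme_fillings}: the partition $\nu=\cols{A}^t$ lies in $\ssupp{A}$, so $F_\nu$ (which already occurs in $s_\nu$) survives in the $F$-expansion of $s_A$ by $F$-positivity, whence $\nu\in\fsupp{A}$ and $\rows{\nu}=\nu=\cols{A}^t$. Part (b) I would get by transposing. Lemma~\ref{lem:cols} identifies $\cols{\alpha}$ with $\rows{\alpha^c}$ for $\alpha^c=\comp{S(\alpha)^c}$, and the standard behaviour of the $F$-basis under $\omega$ (with $s_{A^t}=\omega s_A$) gives $\alpha\in\fsupp{A}\Leftrightarrow\alpha^c\in\fsupp{A^t}$; applying part (a) to $A^t$ then yields $\cols{\alpha}=\rows{\alpha^c}\domleq\cols{A^t}^t=\rows{A}^t$, with the witness transported the same way. (Alternatively, one can repeat the run argument directly on the \emph{descent} runs, which occupy distinct rows, using Lemma~\ref{lem:cols} to read off $\cols{\alpha}$.) Finally, \eqref{equ:suppf} follows from sharpness plus containment: the (a)-sharpness witness $\nu_B=\cols{B}^t$ for $B$ lies in $\fsupp{A}$, so (a) for $A$ gives $\cols{B}^t=\rows{\nu_B}\domleq\cols{A}^t$, i.e.\ $\cols{A}\domleq\cols{B}$; pairing the (b)-witness with (b) gives $\rows{A}\domleq\rows{B}$.

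The only genuinely delicate point is the structural observation that a descent-free run occupies distinct columns, together with the clean per-column bound $\min(\ell_c,k)$; once these are in place the dominance inequality is pure bookkeeping through the conjugate partial-sum identity. The remaining care lies in part (b) and in \eqref{equ:suppf}, where the transposition dictionary ($\omega$, Lemma~\ref{lem:cols}, and $\cols{A^t}=\rows{A}$) must be applied consistently: since conjugation \emph{reverses} dominance, I must be careful to pair each half of \eqref{equ:suppf} with the correct one of (a) and (b) so that no inequality is inadvertently flipped.
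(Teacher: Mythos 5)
Your proposal is correct, but it diverges from the paper's proof in two significant places, so a comparison is in order. The inequality in (a) is proved exactly as in the paper: maximal descent-free runs form horizontal strips, any $k$ of them meet a column of length $\ell_c$ in at most $\min(\ell_c,k)$ cells, and the conjugate partial-sum identity finishes it. Where you differ is in the sharpness of (a) and in all of (b). The paper proves sharpness of (a) by an explicit construction (fill the top cell of every column with $1,\dots,(\cols{A}^t)_1$ left to right, then recurse on what remains), and it proves (b) by running the whole argument again with \emph{vertical} strips (descent runs), including a second explicit construction whose descent set is read off via Lemma~\ref{lem:cols}. You instead get sharpness of (a) non-constructively from Proposition~\ref{pro:extreme_fillings} ($\cols{A}^t$ has Schur coefficient $1$ in $s_A$) together with $F$-positivity of each $s_\lambda$ and the superstandard filling showing $F_\nu$ occurs in $s_\nu$; and you get (b) wholesale from (a) by the transposition dictionary $\fsupp{A^t}=\{\comp{S(\alpha)^c}:\alpha\in\fsupp{A}\}$, Lemma~\ref{lem:cols}, and $\cols{A^t}=\rows{A}$ --- a genuinely slicker route, and one that avoids duplicating the strip argument (note that the cleanest justification of the transposition fact is the bijection $T\mapsto T^t$ on SYTx, under which descent sets complement; this sidesteps any ambiguity about which extension of $\omega$ to QSym one uses, since you only ever apply it to the symmetric functions $s_A$). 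Your deduction of \eqref{equ:suppf} from the sharpness witnesses, with the caution about conjugation reversing dominance for equal-size partitions, is exactly the paper's. What you lose by the shortcut: the paper's explicit extremal fillings are not just for sharpness --- they are reused verbatim as building blocks in the proof of Theorem~\ref{thm:fsupport} (the filling of $B$ there starts with the construction from part (b) and finishes with the construction from part (a)), so the constructive proofs pay for themselves later, whereas your witnesses exist but are not in hand. Also, your parenthetical assertion that $F_\nu$ occurs in $s_\nu$ deserves its one-line justification (the row-by-row filling of $\nu$ has descent composition exactly $\nu$); with that line added, your argument is complete.
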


See Figure~\ref{fig:extreme_fillings} for examples of SYTx giving equality in (a) and (b).
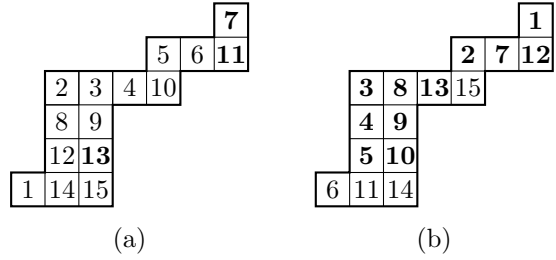
\begin{figure}
\[
\begin{tikzpicture}[scale=0.45]
\begin{scope}
\draw[thick] (0,0) -- (0,1) -- (1,1) -- (1,4) -- (4,4) -- (4,5) --  (6,5) -- (6,6) -- (7,6) -- (7,4) -- (5,4) -- (5,3) -- (3,3) -- (3,0) -- cycle;
\draw (1,0) -- (1,1) -- (3,1);
\draw (2,0) -- (2,4);
\draw (3,3) -- (3,4);
\draw (4,3) -- (4,4) -- (5,4);
\draw (5,4) -- (5,5) -- (7,5);
\draw (6,4) -- (6,5);
\draw (1,3) -- (2,3);
\draw (2,3) -- (2,4);
\draw (3,3) -- (3,4);
\draw (1,2) -- (3,2);
\draw (1,3) -- (3,3);
\draw(0.5,0.5) node {1};
\draw(1.5,0.5) node {14};
\draw(2.5,0.5) node {15};
\draw(1.5,1.5) node {12};
\draw(2.5,1.5) node {\textbf{13}};
\draw(1.5,2.5) node {8};
\draw(2.5,2.5) node {9};
\draw(1.5,3.5) node {2};
\draw(2.5,3.5) node {3};
\draw(3.5,3.5) node {4};
\draw(4.5,3.5) node {10};
\draw(4.5,4.5) node {5};
\draw(5.5,4.5) node {6};
\draw(6.5,4.5) node {\textbf{11}};
\draw(6.5,5.5) node {\textbf{7}};
\draw (3.5,-1) node {(a)};
\end{scope}
\begin{scope}[xshift=90mm]
\draw[thick] (0,0) -- (0,1) -- (1,1) -- (1,4) -- (4,4) -- (4,5) --  (6,5) -- (6,6) -- (7,6) -- (7,4) -- (5,4) -- (5,3) -- (3,3) -- (3,0) -- cycle;
\draw (1,0) -- (1,1) -- (3,1);
\draw (2,0) -- (2,4);
\draw (3,3) -- (3,4);
\draw (4,3) -- (4,4) -- (5,4);
\draw (5,4) -- (5,5) -- (7,5);
\draw (6,4) -- (6,5);
\draw (1,3) -- (2,3);
\draw (2,3) -- (2,4);
\draw (3,3) -- (3,4);
\draw (1,2) -- (3,2);
\draw (1,3) -- (3,3);
\draw(0.5,0.5) node {6};
\draw(1.5,0.5) node {11};
\draw(2.5,0.5) node {14};
\draw(1.5,1.5) node {\textbf{5}};
\draw(2.5,1.5) node {\textbf{10}};
\draw(1.5,2.5) node {\textbf{4}};
\draw(2.5,2.5) node {\textbf{9}};
\draw(1.5,3.5) node {\textbf{3}};
\draw(2.5,3.5) node {\textbf{8}};
\draw(3.5,3.5) node {\textbf{13}};
\draw(4.5,3.5) node {15};
\draw(4.5,4.5) node {\textbf{2}};
\draw(5.5,4.5) node {\textbf{7}};
\draw(6.5,4.5) node {\textbf{12}};
\draw(6.5,5.5) node {\textbf{1}};
\draw (3.5,-1) node {(b)};
\end{scope}
\end{tikzpicture}
\]
\caption{For this skew shape $A$, we have $\rows{A}=433221$, $\rows{A}^t=6531$, $\cols{A}=4422111$ and $\cols{A}^t=7422$.  The descents of the SYTx are shown in bold.  Observe that the SYT in (a) has descent composition $\cols{A}^t$.  The descent composition in (b) is $\alpha=111112111212$, giving $\cols{\alpha}=6531=\rows{A}^t$.}
\label{fig:extreme_fillings}
\end{figure}
\begin{proof}  
By Theorem~\ref{thm:fexpansion}, we know that $\alpha \in \fsupp{A}$ if and only if there exists an SYT $T$ of shape $A$ and descent composition $\alpha$.  First consider (a).  By definition, $\rows{\alpha}_1$ will be the length of the the longest sequence $i, i+1, \ldots, i+j$ such that none of $i, i+1, \ldots, i+j-1$ is a descent in $T$.   Therefore the entries $i, i+1, \ldots, i+j$ appear from left to right in $T$ with no two in the same column. Equivalently, the boxes filled by entries $i, i+1, \ldots, i+j$ form a horizontal strip in $T$, implying that $j+1$ is at most the number of columns of $T$.  In other words, $\rows{\alpha}_1 \leq (\cols{A}^t)_1$.  By the same logic, the elements of the sum $\rows{\alpha}_1 + \cdots +\rows{\alpha}_k$ correspond to a set of $k$ disjoint horizontal strips in $T$.  The number of boxes of any given column of $A$ contained in these $k$ horizontal strips combined is bounded by the minimum of $k$ and the height of the column. 
Compare this with $(\cols{A}^t)_1 + \cdots + (\cols{A}^t)_k$.  Since $(\cols{A}^t)_i$ counts the number of columns of $A$ of height at least $i$, this sum counts the total number of boxes in columns of height less than $k$, plus a contribution of $k$ from each column of height at least $k$.  It follows that
\[
\rows{\alpha}_1 + \cdots +\rows{\alpha}_k  \leq (\cols{A}^t)_1 + \cdots + (\cols{A}^t)_k\,,
\]
as required.

To see that the inequality in (a) is sharp, consider the SYT $T$ of shape $A$ constructed in the following manner.   First, consider the top entry of each nonempty column of $A$, and fill these top entries with $1, 2, \ldots, (\cols{A}^t)_1$ from left to right.  Now consider the skew shape $A^-$ consisting of the boxes that have not yet been filled.  Since $(\cols{A}^t)_k$ counts the number of columns of $A$ of height at least $k$, we know that $A^-$ has $(\cols{A}^t)_2$ columns. Take the top entry of each such column and fill these top entries with $(\cols{A}^t)_1+1, (\cols{A}^t)_1+2, \ldots, (\cols{A}^t)_1 + ( \cols{A}^t)_2$ from left to right.  Continue in this manner until all boxes have been filled.  Because at each stage we filled from left to right and we filled a box in every nonempty column, the descent set of $T$ is  
\[
\{(\cols{A}^t)_1, (\cols{A}^t)_1 + ( \cols{A}^t)_2, \ldots, (\cols{A}^t)_1 +\cdots + ( \cols{A}^t)_{k-1}\},
\] 
where the longest column of $A$ has $k$ boxes.  
In other words, the descent composition $\alpha$ satisfies $\alpha = \rows{\alpha} = \cols{A}^t$, as required. 

The proof of (b) is somewhat similar, except that now we work with vertical strips instead of horizontal strips and fill these vertical strips from top to bottom.   By definition, $\cols{\alpha}_1$ will be the longest sequence $i, i+1, \ldots, i+j$ such that \emph{each} of $i, i+1, \ldots, i+j-1$ is a descent in $T$.   Therefore, the entries $i, i+1, \ldots, i+j$ fill a vertical strip in $A$ from top to bottom, implying that $\cols{\alpha}_1 \leq (\rows{A}^t)_1$.  The rest of the proof is similar to (a).  

To show that the inequality in (b) is sharp, work as in (a) except consider the leftmost entry of each nonempty row instead of the top entry of each column, and fill these leftmost entries from top to bottom.  After completing the filling, the \emph{complement} of the descent set of $T$ in $\{1,2,\ldots,|A|-1\}$ is 
\begin{equation}\label{equ:complement}
\begin{split}
\{(\rows{A}^t)_1, (\rows{A}^t)_1 + ( \rows{A}^t)_2,  \ldots,\\ (\rows{A}^t)_1 +\cdots + ( \rows{A}^t)_{k-1}\},
\end{split}
\end{equation}
where the longest row of $A$ has $k$ boxes.  The composition of $|A|$ corresponding to the set in~\eqref{equ:complement} is $\rows{A}^t$, which has weakly decreasing parts.  By Lemma~\ref{lem:cols}, the descent composition $\alpha$ of $T$ thus satisfies $\cols{\alpha} = \rows{A}^t$, as required.  See Figure~\ref{fig:extreme_fillings}(b) for an example, where the complement of the descent set is $\{6, 11, 14\}$.

The last assertion follows from (a) and (b) and the fact that the transpose operation reverses dominance order when applied to partitions of equal size.
\end{proof}

We need one more concept before giving the proof proper of Theorem~\ref{thm:fsupport}.  For any skew shape $A$, let $\trim{}{A}$ denote the skew shape obtained by deleting the leftmost entry of each nonempty row of $A$.  We will consider $\mathrm{trim}$ to be an operation on skew shapes, meaning that $\trim{\ell}{A} = \trim{}{\trim{\ell-1}{A}}$ and $\trim{1}{A}$ is just $\trim{}{A}$.  
This $\mathrm{trim}$ operation was introduced in \cite{McN08} except there it was defined as deleting the top entry of each nonempty column.

\begin{lemma}\label{lem:trim}
For any skew shape $A$ and $\ell\geq2$, we have
\begin{enumerate}
\item 
$\colsk{\ell-1}{\trim{}{A}} = \colsk{\ell}{A}$;
\item
$
\cols{\trim{\ell-1}{A}} = \colsk{\ell}{A}.
$
\end{enumerate}
\end{lemma}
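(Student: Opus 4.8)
The lemma relates the overlap-based partitions $\colsk{\ell}{A}$ to the simpler column-length partition $\cols{\,\cdot\,}$ after repeatedly applying the $\trim{}{}$ operation (deleting the leftmost box of each nonempty row). Part (a) is the recursive step — one application of $\trim{}{}$ drops the overlap index by one — and part (b) is its iterated consequence, reducing an $\ell$-fold overlap to an ordinary column count after $\ell-1$ trims. The plan is to prove (a) directly by a box-counting bijection and then deduce (b) by an easy induction on $\ell$.

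\medskip

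**Proof plan for (a).** The key observation is how trimming interacts with column overlaps. Fix a skew shape $A$ and recall that $\overlap{\ell}{i}$ counts the columns occupied in common by rows $i, i+1, \ldots, i+\ell-1$. After applying $\trim{}{}$, the leftmost box of each row is removed, which shifts each row's support one step to the right (equivalently, deletes its first occupied column). I would first verify the crucial local fact: a column is occupied in common by a consecutive block of $\ell$ rows in $A$ if and only if the correspondingly shifted column is occupied in common by the same block of $\ell-1$ \emph{adjacent} rows in $\trim{}{A}$, so that the $\ell$-fold overlaps of $A$ are in length-preserving correspondence with the $(\ell-1)$-fold overlaps of $\trim{}{A}$. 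The careful point is matching up the indexing of rows and the bookkeeping of which columns survive trimming; once the multiset of overlap values is shown to coincide, taking weakly decreasing rearrangements on both sides gives $\colsk{\ell-1}{\trim{}{A}} = \colsk{\ell}{A}$. Because the statement concerns $\colsk{}{}$ (column overlaps) and $\trim{}{}$ deletes leftmost \emph{row} entries, I expect the cleanest route is to phrase everything in terms of which columns of $A$ have height at least some threshold within a given row-block, translating the overlap count into a statement about column heights that is manifestly preserved under the shift.

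\medskip

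**Proof plan for (b).** With (a) in hand, part (b) follows by induction on $\ell \geq 2$. The base case $\ell = 2$ reads $\cols{\trim{}{A}} = \colsk{2}{A}$, which is exactly (a) with $\ell = 2$ after using $\colsk{1}{\,\cdot\,} = \cols{\,\cdot\,}$. For the inductive step, assume $\cols{\trim{\ell-2}{A}} = \colsk{\ell-1}{A}$; applying the inductive hypothesis to the skew shape $\trim{}{A}$ and combining with (a) gives
\[
\cols{\trim{\ell-1}{A}} = \cols{\trim{\ell-2}{\trim{}{A}}} = \colsk{\ell-1}{\trim{}{A}} = \colsk{\ell}{A},
\]
where the middle equality is the inductive hypothesis applied to $\trim{}{A}$ and the last is (a). This uses only the definition $\trim{\ell-1}{A} = \trim{\ell-2}{\trim{}{A}}$ and the already-established recursion.

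\medskip

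**Main obstacle.** The routine calculations are harmless; the genuinely delicate step is (a), specifically making the correspondence between $\ell$-fold column overlaps of $A$ and $(\ell-1)$-fold overlaps of $\trim{}{A}$ precise at the level of individual columns and row-blocks. One must check that deleting the leftmost box of every nonempty row neither creates nor destroys any overlap among a consecutive block of rows except for uniformly lowering the overlap \emph{arity} by one in the intended way, and that no column's contribution is miscounted when a row becomes empty or when rows of differing lengths are trimmed simultaneously. Handling these edge cases correctly — empty rows produced by trimming, and the interaction with the convention that overlaps are indexed over consecutive rows — is where the real content lies.
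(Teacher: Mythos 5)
Your part (b) is fine and matches the paper's proof (iterate (a)). The problem is in part (a): your ``crucial local fact'' is stated for the wrong statistic. You recall $\overlap{\ell}{i}$ as counting \emph{columns occupied in common by rows} $i,\ldots,i+\ell-1$ --- that is the row overlap, the quantity underlying $\rowsk{\ell}{A}$ --- and your proposed equivalence (``a column is occupied in common by a block of $\ell$ rows of $A$ iff the correspondingly shifted column is occupied in common by the same block of $\ell-1$ rows of $\trim{}{A}$'') is, read literally, the assertion $\rowsk{\ell-1}{\trim{}{A}} = \rowsk{\ell}{A}$. That assertion is false for this trim operation: take $A = (2,2)$, so that $\trim{}{A}$ is a vertical domino; then $\rowsk{2}{A} = (2)$ while $\rowsk{1}{\trim{}{A}} = \rows{\trim{}{A}} = (1,1)$. (It is the \emph{other} trim, deleting the top box of each column as in \cite{McN08}, that interacts this way with row overlaps.) Note also that ``the same block of $\ell-1$ rows'' is not even well defined when the block in question had $\ell$ rows.

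The fact you actually need --- and it is the entire content of the paper's one-sentence proof --- concerns windows of \emph{columns}: every row of a skew shape occupies a contiguous interval $[a_r,b_r]$ of columns, and trimming replaces this interval by $[a_r+1,b_r]$ (or by the empty set). Hence a row contains both columns $i$ and $i+\ell-1$ of $A$ if and only if it contains both columns $i+1$ and $i+\ell-1$ of $\trim{}{A}$: the condition is $a_r \le i$ and $i+\ell-1\le b_r$ in both cases. Therefore the window of $\ell$ consecutive columns starting at $i$ in $A$ and the window of $\ell-1$ consecutive columns starting at $i+1$ in $\trim{}{A}$ have identical overlap counts, so the multisets of overlap values agree and $\colsk{\ell-1}{\trim{}{A}} = \colsk{\ell}{A}$. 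Your closing suggestion to rephrase things via ``column heights within a given row-block'' again points at row overlaps and does not supply this; until the correspondence is stated with rows and columns in their correct roles, the central step of your argument fails. The edge cases you worry about (rows emptied by trimming) are harmless in the correct formulation, since a one-box row contains no pair of distinct columns and so contributes $0$ on both sides.
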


\begin{proof}
Suppose column $i$ of $A$ contributes $c$ to $\colsk{\ell}{A}$, in the sense that column $i$ of $A$ overlaps with column $i+\ell-1$ in exactly $c$ rows.  We see that this is equivalent to column $i+1$ of $\trim{}{A}$ overlapping with column $i+\ell-1$ in exactly $c$ rows, thus contributing $c$ to $\colsk{\ell-1}{\trim{}{A}}$, implying the result. 

Repeatedly applying (a) gives (b). 
\end{proof}

\begin{proof}[Proof of Theorem~\ref{thm:fsupport}]
We assume that $\fsupp{A} \supseteq \fsupp{B}$ and show that 
\[
\colsk{\ell}{A} \domleq \colsk{\ell}{B} \mbox{\ \ for all $\ell$}.
\]
By Proposition~\ref{pro:rectsineq}, the $\mathrm{rows}$ and $\mathrm{rects}$ conditions will follow.  

With $\ell$ fixed, we will construct a particular SYT $T$ of shape $B$ and descent composition $\alpha$.  
Our choice of $T$ will help us isolate $\cols{\trim{\ell-1}{B}}$  which, by Lemma~\ref{lem:trim}(b), means we will isolate $\colsk{\ell}{B}$.  Roughly speaking, we will start our construction of $T$ so that $\alpha$ is as least dominant as possible, and construct the remainder of $T$ so that $\alpha$ is as dominant as possible.  More precisely,  follow the construction of $T$ from Proposition~\ref{pro:f_extreme_fillings}(b) by considering the leftmost box of each row and then filling these boxes by $1, \ldots, (\rows{B}^t)_1$ from top to bottom.  Repeat this process with the leftmost unfilled box of each row, and continue until the $\ell-1$ leftmost boxes of each row have been filled, or a row has been completely filled if it has less than $\ell-1$ boxes.  Suppose a total of $m$ boxes has been filled to this point.  The shape that remains unfilled is exactly $\trim{\ell-1}{B}$.  For an example, 
see Figure~\ref{fig:main_proof}. 

\begin{figure}
\[
\begin{tikzpicture}[scale=0.45]
\begin{scope}
\fill[pink] (2,0) rectangle (3,1);
\fill[pink] (3,3) rectangle (5,4);
\fill[pink] (6,4) rectangle (7,5);
\draw[thick] (0,0) -- (0,1) -- (1,1) -- (1,4) -- (4,4) -- (4,5) --  (6,5) -- (6,6) -- (7,6) -- (7,4) -- (5,4) -- (5,3) -- (3,3) -- (3,0) -- cycle;
\draw (1,0) -- (1,1) -- (3,1);
\draw (2,0) -- (2,4);
\draw (3,3) -- (3,4);
\draw (4,3) -- (4,4) -- (5,4);
\draw (5,4) -- (5,5) -- (7,5);
\draw (6,4) -- (6,5);
\draw (1,3) -- (2,3);
\draw (2,3) -- (2,4);
\draw (3,3) -- (3,4);
\draw (1,2) -- (3,2);
\draw (1,3) -- (3,3);
\draw(0.5,0.5) node {6};
\draw(1.5,0.5) node {11};
\draw(2.5,0.5) node {12};
\draw(1.5,1.5) node {5};
\draw(2.5,1.5) node {10};
\draw(1.5,2.5) node {4};
\draw(2.5,2.5) node {9};
\draw(1.5,3.5) node {3};
\draw(2.5,3.5) node {8};
\draw(3.5,3.5) node {13};
\draw(4.5,3.5) node {14};
\draw(4.5,4.5) node {2};
\draw(5.5,4.5) node {7};
\draw(6.5,4.5) node {15};
\draw(6.5,5.5) node {1};
\draw(-1.5,3) node {$B=$};
\end{scope}
\begin{scope}[xshift=100mm]
\fill[pink] (2,1) rectangle (3,2);
\fill[pink] (5,3) rectangle (6,4);
\fill[pink] (6,6) rectangle (7,7);
\draw[thick] (0,0) -- (0,2) -- (2,2) -- (2,3) -- (3,3) -- (3,4) -- (4,4) -- (4,7) -- (7,7) -- (7,6) -- (6,6) -- (6,3) -- (3,3) -- (3,1) -- (1,1) -- (1,0) -- cycle;
\draw (0,1) -- (1,1) -- (1,2) -- (3,2);
\draw (2,1) -- (2,2);
\draw (4,3) -- (4,4) -- (6,4);
\draw (4,5) -- (6,5);
\draw (4,6) -- (6,6) -- (6,7);
\draw (5,3) -- (5,7);
\draw(0.5,0.5) node {11};
\draw(0.5,1.5) node {6};
\draw(1.5,1.5) node {10};
\draw(2.5,1.5) node {12};
\draw(2.5,2.5) node {5};
\draw(3.5,3.5) node {4};
\draw(4.5,3.5) node {13};
\draw(5.5,3.5) node {14};
\draw(4.5,4.5) node {3};
\draw(5.5,4.5) node {9};
\draw(4.5,5.5) node {2};
\draw(5.5,5.5) node {8};
\draw(4.5,6.5) node {1};
\draw(5.5,6.5) node {7};
\draw(6.5,6.5) node {15};
\draw(-1.5,3) node {$A=$};
\end{scope}
\begin{scope}[xshift=180mm]
\fill[pink] (2,1) rectangle (3,2);
\fill[pink] (5,3) rectangle (6,4);
\fill[pink] (6,6) rectangle (7,7);
\draw[thick] (2,1) rectangle (3,2);
\draw[thick] (4,3) rectangle (6,4);
\draw[thick] (6,6) rectangle (7,7);
\draw (5,3) -- (5,4);
\draw(2.5,1.5) node {1};
\draw(4.5,3.5) node {2};
\draw(5.5,3.5) node {3};
\draw(6.5,6.5) node {4};
\draw(0.5,3) node {$C=$};
\end{scope}
\end{tikzpicture}
\]
\caption{An example of the fillings of $B$, $A$ and $C$ from the proof of Theorem~\ref{thm:fsupport}.  Here, $\ell=3$, $m=11$, and the boxes of $\trim{2}{B}$ and $\trim{2}{A}$ are colored/shaded.}
\label{fig:main_proof}
\end{figure}
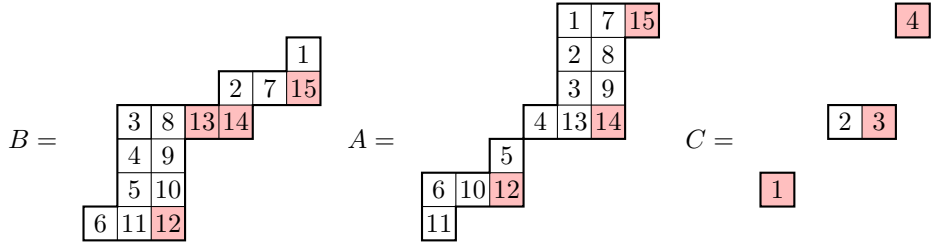
We now fill this remaining shape $\trim{\ell-1}{B}$ in the most dominant way possible.  Following Proposition~\ref{pro:f_extreme_fillings}(a), the descent composition of this remaining filling will be $\cols{\trim{\ell-1}{B}}^t$.  By Lemma~\ref{lem:trim}(b), this equals $\colsk{\ell}{B}^t$.  This might suggest, at first glance, that the descent composition $\alpha$ of $T$ consists of the concatenation of some composition of $m$ with $\colsk{\ell}{B}^t$.  This is not the case since $m$ is not a descent in $T$, but this will not affect our argument. 
  
Since $\fsupp{A} \supseteq \fsupp{B}$, there exists an SYT $T'$ of shape $A$ with descent composition $\alpha$.  Remove the boxes filled with $1, 2, \ldots, m$ in $T'$ to get a filling of some shape $C$, and subtract $m$ from all the entries of $C$.  This yields an SYT of shape $C$ with descent composition $\colsk{\ell}{B}^t$.  By Proposition~\ref{pro:f_extreme_fillings}(a)  and since $\colsk{\ell}{B}^t$ is weakly decreasing, we have $\colsk{\ell}{B}^t \domleq \cols{C}^t$.  Since $\colsk{\ell}{B}^t$ and $\cols{C}^t$ are both partitions of $|B|-m$, we deduce that $\colsk{\ell}{B} \domgeq \cols{C}$.

Now consider $\trim{\ell-1}{A}$.  Since $T'$ has descent composition $\alpha$, the numbers $1, 2, \ldots, m$ must have formed $\ell-1$ vertical strips that filled the left ends of any rows they occupied.  Therefore, $\trim{\ell-1}{A} \subseteq C$, by definition of $C$.  By Lemma~\ref{lem:dom_containment} applied to column lengths, $\cols{\trim{\ell-1}{A}} \domleq \cols{C}$.  Putting everything together, we get
\[
\cols{\trim{\ell-1}{A}} \domleq \cols{C} \domleq \colsk{\ell}{B}.
\]
Applying Lemma~\ref{lem:trim}(b) yields the desired result.
\end{proof}

\section{Conjecture for the converse}\label{sec:mainconjecture}

In Theorem~\ref{thm:fsupport} and its corollaries, our hypotheses on $A$ and $B$ have implied that we only consider cases where $A$ and $B$ have equal size.   Along the same lines, when comparing $\rowsk{k}{A}$ and $\colsk{k}{B}$ in this section, we will restrict to the case of $A$ and $B$ having the same size, and we can do so without our work losing any substance.

\subsection{The converse statements}

The converse of Corollary~\ref{cor:fpositive} would state that if $\rowsk{k}{A} \domleq \rowsk{k}{B}$ for all $k$ then $s_A - s_B$ is $F$-positive, but this is certainly not true.  To obtain a counterexample, one only needs to consider skew shapes of size 4: take $A = 311/1$ and $B = 32/1$; there are two SYT of shape $B$ with descent composition 22, but only one such SYT of shape A.  The same example shows that both possibilities for the converse of Theorem~\ref{thm:combine} also fail to hold.  As for the equality questions, $A_1$ and $A_2$ from Example~\ref{exa:intro} show that $\rowsk{k}{A} = \rowsk{k}{B}$ for all $k$ does not imply that $s_A = s_B$ or even that Schur supports are equal.  

Given these counterexamples, one might expect the converse of Theorem~\ref{thm:fsupport} to fail for a similarly low value of $|A|$, such as 4, 5 or 6.  However, we have computationally checked that the following conjecture holds for all $A$ and $B$ with $|A| \leq 12$.  

\begin{conjecture}\label{con:fsupport}
Skew shapes $A$ and $B$ of the same size satisfy $\fsupp{A} \supseteq \fsupp{B}$ if and only if $\rowsk{k}{A} \domleq \rowsk{k}{B}$ for all $k$.
\end{conjecture}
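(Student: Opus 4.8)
The forward implication is precisely Theorem~\ref{thm:fsupport}, so the entire content of the conjecture lies in its converse: assuming $\rowsk{k}{A} \domleq \rowsk{k}{B}$ for all $k$ (equivalently, by Proposition~\ref{pro:rectsineq}, that $\rects{k}{\ell}{A} \leq \rects{k}{\ell}{B}$ for all $k,\ell$), one must show $\fsupp{A} \supseteq \fsupp{B}$. By Theorem~\ref{thm:fexpansion} this amounts to showing that every descent composition realized by some SYT of shape $B$ is also realized by some SYT of shape $A$. So the plan is to fix a composition $\alpha \in \fsupp{B}$, witnessed by an SYT $T_B$ of shape $B$, and to produce an SYT $T_A$ of shape $A$ with the same descent composition $\alpha$.

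The first step is to convert the descent data into a geometric decomposition. An SYT with descent composition $\alpha = (\alpha_1, \ldots, \alpha_k)$ is the same as a chain of skew shapes built by successively adjoining horizontal strips of sizes $\alpha_1, \ldots, \alpha_k$, read in order, whose boundaries are exactly the descents; transposing gives the dual description in terms of vertical strips governed by $\cols{\alpha}$. The hypothesis $\rects{k}{\ell}{A} \leq \rects{k}{\ell}{B}$ says that $A$ contains at most as many $k \times \ell$ rectangles as $B$ does, and the idea is to treat these inequalities as a supply of ``room'' certifying that the strip sizes forced by $T_B$ can always be accommodated inside $A$. One would then attempt a greedy left-to-right, top-to-bottom filling of $A$ that mimics the run structure of $T_B$, invoking the rectangle inequalities in the spirit of a Hall-type feasibility condition to guarantee that each successive strip of the prescribed size fits without creating an unwanted descent.

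A more structured route is to argue by induction and reduce to covering relations in the overlaps poset. Since the conjecture asserts that the $F$-support poset and the overlaps poset coincide, it would suffice to realize each \emph{cover} of the overlaps order by an $F$-support containment: if $B$ covers $A$ under simultaneous dominance of all the $\rowsk{k}$, one exhibits a single local modification of skew shapes that decreases some $\rowsk{k}$ minimally and whose effect on descent compositions can be tracked box-by-box. The trim operation and Lemma~\ref{lem:trim}, which drove the forward proof by isolating $\colsk{\ell}$ one layer at a time, are the natural tools for such an inductive peeling: one hopes to strip a common outer layer from $A$ and $B$, apply the inductive hypothesis to $\trim{}{A}$ and $\trim{}{B}$, and reassemble the tableaux.

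The main obstacle is the passage from the numerical invariant to an actual tableau. The triple $(\rects{k}{\ell})$ is a genuinely coarse statistic that is far from determining a skew shape, so a global count of available rectangles need not translate into a local ability to place a prescribed horizontal strip, and a naive greedy construction can stall even when every rectangle inequality holds. Compounding this, one cannot route the argument through the stronger notions, since the converses of Schur-positivity, $F$-positivity, and Schur-support containment all fail, as the size-$4$ example $A=311/1$, $B=32/1$ shows. The most promising safeguards are the settings where the conjecture is already verified---ribbons whose rows all have length at least $2$, and all shapes with at most $12$ boxes---which should serve both as base cases and as a testing ground for whatever local move or feasibility criterion one adopts; making such a criterion provably sufficient in general is where the difficulty is concentrated.
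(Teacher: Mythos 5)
The statement you are asked to prove is a \emph{conjecture} in the paper: the paper itself proves only the ``only if'' direction (that is Theorem~\ref{thm:fsupport}), verifies the ``if'' direction computationally for $|A|\leq 12$, and establishes it in special cases --- horizontal strips (via $h_{\rows{A}}-h_{\rows{B}}$ and dominance), elongated ribbons (Theorem~\ref{thm:ribbons}), and $F$-multiplicity-free shapes (via the classification in Theorem~\ref{thm:multfree}). Your proposal has the same overall shape as the paper's state of knowledge: you correctly identify the forward implication as Theorem~\ref{thm:fsupport} and correctly isolate the converse as the open content. But what you have written is a research plan, not a proof, and the gap is exactly the entire ``if'' direction. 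Neither of your two strategies is carried out: the greedy/Hall-type approach never states a feasibility criterion, let alone proves that the rectangle inequalities imply it (and you yourself note a greedy filling ``can stall even when every rectangle inequality holds''); the cover-induction approach presupposes a description of the cover relations of $\ncn{n}$, which is not known, and it also silently requires a second statement that covers alone cannot deliver --- that any two shapes in the same overlaps-equivalence class ($\rowsk{k}{A}=\rowsk{k}{B}$ for all $k$) have \emph{equal} $F$-supports, since the poset elements are equivalence classes. That equality statement is itself an open part of the conjecture, as the paper notes.

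For comparison, where the paper does prove the converse it uses machinery quite different from what you sketch: for elongated ribbons it reduces, via \cite[Theorem~3.3]{KWvW08}, to showing $\fsupp{A}=\fsupp{A^\geq}$, and proves this by an explicit descent-preserving surgery that swaps two adjacent rows of an SYT (moving $\ell-k$ carefully chosen entries, with a case analysis on ``paired'' descents); for $F$-multiplicity-free shapes it computes both sides of the conjectured equivalence outright from explicit $F$-expansions. Both arguments are delicate even in their restricted settings, which is a good indication of why an unexecuted Hall-type or cover-based scheme cannot be accepted as a proof: the passage from the coarse numerical data $(\rects{k}{\ell})$ to an actual tableau of prescribed descent composition is precisely the step that no one, including the paper's author, knows how to perform in general.
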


By Proposition~\ref{pro:rectsineq}, we could equivalently replace the $\mathrm{rows}$ condition by the appropriate $\mathrm{cols}$ or $\mathrm{rects}$ condition.  A proof of Conjecture~\ref{con:fsupport} would also imply that $\rowsk{k}{A} = \rowsk{k}{B}$ for all $k$ if and only if $\fsupp{A} = \fsupp{B}$, and perhaps this latter statement would be an easier one to prove or disprove.  

Obviously, the ``only if'' direction of Conjecture~\ref{con:fsupport} is Theorem~\ref{thm:fsupport}. Despite evidence in favor of the ``if'' direction, this author still remains somewhat skeptical for the following reason.  Close examination of the proof of Theorem~\ref{thm:fsupport} suggests that $\fsupp{B}$ encodes more information than $\rowsk{k}{B}$ for all $k$ or equivalently $\colsk{\ell}{B}$ for all $\ell$, since only certain elements of the support were used in the proof of Theorem~\ref{thm:fsupport}.  Roughly speaking, we focussed on those compositions $\alpha$ in the support that were obtained by starting our filling in the least dominant way possible, and then filling the remainder $\trim{\ell-1}{B}$ in the most dominant way possible; for each $\ell$, we only used one element of $\fsupp{B}$ to isolate $\colsk{\ell}{B}$.

It can be helpful to view Conjecture~\ref{con:fsupport} in terms of two partially ordered sets.  For the first poset $\suppfn{n}$, the elements will be equivalence classes of skew shapes of size $n$, where the equivalence relation is $A \sim B$ if $\fsupp{A} = \fsupp{B}$;  the order relation will be $[A] \geq_\suppfn{n} [B]$ if $\fsupp{A} \supseteq \fsupp{B}$, where $[A]$ denotes the equivalence class of $A$.  For the second poset, $\ncn{n}$, the elements will be equivalence classes of skew shapes of size $n$, where the equivalence relation is $A \sim B$ if $\rowsk{k}{A} = \rowsk{k}{B}$ for all $k$.  The order relation for the second poset is  $[A] \geq_\ncn{n} [B]$ if $\rowsk{k}{A} \domleq \rowsk{k}{B}$ for all $k$, where $[A]$ now denotes the equivalence class of $A$ under this second equivalence relation.  Is is straightforward to check that Conjecture~\ref{con:fsupport} is equivalent to the statement that the posets $\suppfn{n}$ and $\ncn{n}$ are isomorphic under the map that sends the equivalence class $[A]$ in $\suppfn{n}$ to the equivalence class $[A]$ in $\ncn{n}$.  The poset for the case $n=6$ is shown in Figure~\ref{fig:fsupp_6}.  
\begin{figure}
\[
\begin{tikzpicture}[scale=1.05]
\tikzstyle{every node}=[draw, thick, shape=circle, inner sep=1pt];
\Yboxdim4pt;
\draw (3,0) node (33) {\young(\ \ \ ,\ \ \ )};
\draw (1,0) node (42) {\young(\ \ \ \ ,\ \ )};
\draw (7,0) node (222) {\young(\ \ ,\ \ ,\ \ )};
\draw (5,0) node (321) {\young(\ \ \ ,\ \ ,\ )};
\draw (9,0) node (2211) {\young(\ \ ,\ \ ,\ ,\ )};
\draw (-0.5,1.5) node (6) {\young(\ \ \ \ \ \ )};
\draw (2,1.5) node (43/1) {\young(:\ \ \ ,\ \ \ )};
\draw (0.5,1.5) node (51) {\young(\ \ \ \ \ ,\ )};
\draw (4,1.5) node (331/1) {\young(:\ \ ,\ \ \ ,\ )};
\draw (6,1.5) node (332/11) {\young(:\ \ ,\ \ ,\ \ )};
\draw (3,1.5) node (411) {\young(\ \ \ \ ,\ ,\ )};
\draw (8,1.5) node (2221/1) {\young(:\ ,\ \ ,\ \ ,\ )};
\draw (7,1.5) node (3111) {\young(\ \ \ ,\ ,\ ,\ )};
\draw (9.5,1.5) node (21111) {\young(\ \ ,\ ,\ ,\ ,\ )};
\draw (10.5,1.5) node (111111) {\young(\ ,\ ,\ ,\ ,\ ,\ )};
\draw (1,3) node (52/1) {\young(:\ \ \ \ ,\ \ )};
\draw (0,3) node (61/1) {\young(:\ \ \ \ \ ,\ )};
\draw (4,3) node (422/11) {\young(:\ \ \ ,:\ ,\ \ )};
\draw (3,3) node (431/11) {\young(:\ \ \ ,:\ \ ,\ )};
\draw (2,3) node (441/3) {\young(:::\ ,\ \ \ \ ,\ )};
\draw (8,3) node (3222/111) {\young(:\ \ ,:\ ,:\ ,\ \ )};
\draw (6,3) node (3311/2) {\young(::\ ,\ \ \ ,\ ,\ )};
\draw (7,3) node (3321/111) {\young(:\ \ ,:\ \ ,:\ ,\ )};
\draw (9,3) node (22111/1) {\young(:\ ,\ \ ,\ ,\ ,\ )};
\draw (10,3) node (222221/11111) {\young(:\ ,:\ ,:\ ,:\ ,:\ ,\ )};
\draw (1,4.5) node (53/2) {\young(::\ \ \ ,\ \ \ )};
\draw (0,4.5) node (62/2) {\young(::\ \ \ \ ,\ \ )};
\draw (3,4.5) node (421/1) {\young(:\ \ \ ,\ \ ,\ )};
\draw (4,4.5) node (442/22) {\young(::\ \ ,::\ \ ,\ \ )};
\draw (2,4.5) node (521/11) {\young(:\ \ \ \ ,:\ ,\ )};
\draw (7,4.5) node (3211/1) {\young(:\ \ ,\ \ ,\ ,\ )};
\draw (6,4.5) node (3311/11) {\young(:\ \ ,:\ \ ,\ ,\ )};
\draw (5,4.5) node (4221/111) {\young(:\ \ \ ,:\ ,:\ ,\ )};
\draw (9,4.5) node (22211/11) {\young(:\ ,:\ ,\ \ ,\ ,\ )};
\draw (8,4.5) node (32221/1111) {\young(:\ \ ,:\ ,:\ ,:\ ,\ )};
\draw (10,4.5) node (222211/1111) {\young(:\ ,:\ ,:\ ,:\ ,\ ,\ )};
\draw (0,6) node (63/3) {\young(:::\ \ \ ,\ \ \ )};
\draw (3,6) node (432/21) {\young(::\ \ ,:\ \ ,\ \ )};
\draw (2,6) node (531/21) {\young(::\ \ \ ,:\ \ ,\ )};
\draw (1,6) node (621/21) {\young(::\ \ \ \ ,:\ ,\ )};
\draw (7,6) node (3321/21) {\young(::\ ,:\ \ ,\ \ ,\ )};
\draw (6,6) node (4211/2) {\young(::\ \ ,\ \ ,\ ,\ )};
\draw (4,6) node (4211/11) {\young(:\ \ \ ,:\ ,\ ,\ )};
\draw (5,6) node (4421/221) {\young(::\ \ ,::\ \ ,:\ ,\ )};
\draw (8,6) node (32211/111) {\young(:\ \ ,:\ ,:\ ,\ ,\ )};
\draw (10,6) node (222111/111) {\young(:\ ,:\ ,:\ ,\ ,\ ,\ )};
\draw (9,6) node (333321/22221) {\young(::\ ,::\ ,::\ ,::\ ,:\ ,\ )};
\draw (2.33,7.5) node (542/32) {\young(:::\ \ ,::\ \ ,\ \ )};
\draw (1,7.5) node (631/31) {\young(:::\ \ \ ,:\ \ ,\ )};
\draw (5,7.5) node (4321/211) {\young(::\ \ ,:\ \ ,:\ ,\ )};
\draw (3.67,7.5) node (5321/221) {\young(::\ \ \ ,::\ ,:\ ,\ )};
\draw (7.67,7.5) node (33211/211) {\young(::\ ,:\ \ ,:\ ,\ ,\ )};
\draw (6.33,7.5) node (43321/2221) {\young(::\ \ ,::\ ,::\ ,:\ ,\ )};
\draw (9,7.5) node (333221/22211) {\young(::\ ,::\ ,::\ ,:\ ,:\ ,\ )};
\draw (2,9) node (642/42) {\young(::::\ \ ,::\ \ ,\ \ )};
\draw (4.4,9) node (5421/321) {\young(:::\ \ ,::\ \ ,:\ ,\ )};
\draw (3.2,9) node (6321/321) {\young(:::\ \ \ ,::\ ,:\ ,\ )};
\draw (5.6,9) node (43221/2211) {\young(::\ \ ,::\ ,:\ ,:\ ,\ )};
\draw (8,9) node (332211/2211) {\young(::\ ,::\ ,:\ ,:\ ,\ ,\ )};
\draw (6.8,9) node (444321/33321) {\young(:::\ ,:::\ ,:::\ ,::\ ,:\ ,\ )};
\draw (3,10.5) node (6421/421) {\young(::::\ \ ,::\ \ ,:\ ,\ )};
\draw (5,10.5) node (54321/3321) {\young(:::\ \ ,:::\ ,::\ ,:\ ,\ )};
\draw (7,10.5) node (443321/33221) {\young(:::\ ,:::\ ,::\ ,::\ ,:\ ,\ )};
\draw (4,12) node (64321/4321) {\young(::::\ \ ,:::\ ,::\ ,:\ ,\ )};
\draw (6,12) node (554321/44321) {\young(::::\ ,::::\ ,:::\ ,::\ ,:\ ,\ )};
\draw (5,13.5) node (654321/54321) {\young(:::::\ ,::::\ ,:::\ ,::\ ,:\ ,\ )};
\draw [color=blue,thick] (6) -- (61/1);
\draw [color=blue,thick] (33) -- (43/1);
\draw [color=blue,thick] (33) -- (331/1);
\draw [color=blue,thick] (42) -- (43/1);
\draw [color=blue,thick] (42) -- (52/1);
\draw [color=blue,thick] (42) -- (441/3);
\draw [color=blue,thick] (43/1) -- (53/2);
\draw [color=blue,thick] (43/1) -- (431/11);
\draw [color=blue,thick] (51) -- (52/1);
\draw [color=blue,thick] (51) -- (61/1);
\draw [color=blue,thick] (52/1) -- (53/2);
\draw [color=blue,thick] (52/1) -- (62/2);
\draw [color=blue,thick] (52/1) -- (521/11);
\draw [color=blue,thick] (53/2) -- (63/3);
\draw [color=blue,thick] (53/2) -- (531/21);
\draw [color=blue,thick] (61/1) -- (62/2);
\draw [color=blue,thick] (62/2) -- (63/3);
\draw [color=blue,thick] (62/2) -- (621/21);
\draw [color=blue,thick] (63/3) -- (631/31);
\draw [color=blue,thick] (222) -- (332/11);
\draw [color=blue,thick] (222) -- (2221/1);
\draw [color=blue,thick] (321) -- (331/1);
\draw [color=blue,thick] (321) -- (332/11);
\draw [color=blue,thick] (321) -- (422/11);
\draw [color=blue,thick] (321) -- (3311/2);
\draw [color=blue,thick] (331/1) -- (431/11);
\draw [color=blue,thick] (331/1) -- (3311/11);
\draw [color=blue,thick] (332/11) -- (442/22);
\draw [color=blue,thick] (332/11) -- (3321/111);
\draw [color=blue,thick] (411) -- (422/11);
\draw [color=blue,thick] (411) -- (441/3);
\draw [color=blue,thick] (421/1) -- (432/21);
\draw [color=blue,thick] (421/1) -- (531/21);
\draw [color=blue,thick] (421/1) -- (4211/11);
\draw [color=blue,thick] (422/11) -- (421/1);
\draw [color=blue,thick] (422/11) -- (4221/111);
\draw [color=blue,thick] (431/11) -- (421/1);
\draw [color=blue,thick] (431/11) -- (442/22);
\draw [color=blue,thick] (432/21) -- (542/32);
\draw [color=blue,thick] (432/21) -- (4321/211);
\draw [color=blue,thick] (441/3) -- (421/1);
\draw [color=blue,thick] (441/3) -- (521/11);
\draw [color=blue,thick] (442/22) -- (432/21);
\draw [color=blue,thick] (442/22) -- (4421/221);
\draw [color=blue,thick] (521/11) -- (531/21);
\draw [color=blue,thick] (521/11) -- (621/21);
\draw [color=blue,thick] (531/21) -- (542/32);
\draw [color=blue,thick] (531/21) -- (631/31);
\draw [color=blue,thick] (531/21) -- (5321/221);
\draw [color=blue,thick] (542/32) -- (642/42);
\draw [color=blue,thick] (542/32) -- (5421/321);
\draw [color=blue,thick] (621/21) -- (631/31);
\draw [color=blue,thick] (631/31) -- (642/42);
\draw [color=blue,thick] (631/31) -- (6321/321);
\draw [color=blue,thick] (642/42) -- (6421/421);
\draw [color=blue,thick] (2211) -- (2221/1);
\draw [color=blue,thick] (2211) -- (3222/111);
\draw [color=blue,thick] (2211) -- (22111/1);
\draw [color=blue,thick] (2221/1) -- (3321/111);
\draw [color=blue,thick] (2221/1) -- (22211/11);
\draw [color=blue,thick] (3111) -- (3222/111);
\draw [color=blue,thick] (3111) -- (3311/2);
\draw [color=blue,thick] (3211/1) -- (3321/21);
\draw [color=blue,thick] (3211/1) -- (4211/2);
\draw [color=blue,thick] (3211/1) -- (32211/111);
\draw [color=blue,thick] (3222/111) -- (3211/1);
\draw [color=blue,thick] (3222/111) -- (32221/1111);
\draw [color=blue,thick] (3311/2) -- (3211/1);
\draw [color=blue,thick] (3311/2) -- (4221/111);
\draw [color=blue,thick] (3311/11) -- (3321/21);
\draw [color=blue,thick] (3311/11) -- (4421/221);
\draw [color=blue,thick] (3321/21) -- (4321/211);
\draw [color=blue,thick] (3321/21) -- (33211/211);
\draw [color=blue,thick] (3321/111) -- (3211/1);
\draw [color=blue,thick] (3321/111) -- (3311/11);
\draw [color=blue,thick] (4211/2) -- (4321/211);
\draw [color=blue,thick] (4211/2) -- (43321/2221);
\draw [color=blue,thick] (4211/11) -- (4321/211);
\draw [color=blue,thick] (4211/11) -- (5321/221);
\draw [color=blue,thick] (4221/111) -- (4211/2);
\draw [color=blue,thick] (4221/111) -- (4211/11);
\draw [color=blue,thick] (4321/211) -- (5421/321);
\draw [color=blue,thick] (4321/211) -- (43221/2211);
\draw [color=blue,thick] (4421/221) -- (4321/211);
\draw [color=blue,thick] (5321/221) -- (5421/321);
\draw [color=blue,thick] (5321/221) -- (6321/321);
\draw [color=blue,thick] (5421/321) -- (6421/421);
\draw [color=blue,thick] (5421/321) -- (54321/3321);
\draw [color=blue,thick] (6321/321) -- (6421/421);
\draw [color=blue,thick] (6421/421) -- (64321/4321);
\draw [color=blue,thick] (21111) -- (22111/1);
\draw [color=blue,thick] (21111) -- (222221/11111);
\draw [color=blue,thick] (22111/1) -- (22211/11);
\draw [color=blue,thick] (22111/1) -- (32221/1111);
\draw [color=blue,thick] (22111/1) -- (222211/1111);
\draw [color=blue,thick] (22211/11) -- (32211/111);
\draw [color=blue,thick] (22211/11) -- (222111/111);
\draw [color=blue,thick] (32211/111) -- (33211/211);
\draw [color=blue,thick] (32211/111) -- (43321/2221);
\draw [color=blue,thick] (32211/111) -- (333221/22211);
\draw [color=blue,thick] (32221/1111) -- (32211/111);
\draw [color=blue,thick] (32221/1111) -- (333321/22221);
\draw [color=blue,thick] (33211/211) -- (43221/2211);
\draw [color=blue,thick] (33211/211) -- (332211/2211);
\draw [color=blue,thick] (43221/2211) -- (54321/3321);
\draw [color=blue,thick] (43221/2211) -- (443321/33221);
\draw [color=blue,thick] (43321/2221) -- (43221/2211);
\draw [color=blue,thick] (43321/2221) -- (444321/33321);
\draw [color=blue,thick] (54321/3321) -- (64321/4321);
\draw [color=blue,thick] (54321/3321) -- (554321/44321);
\draw [color=blue,thick] (64321/4321) -- (654321/54321);
\draw [color=blue,thick] (111111) -- (222221/11111);
\draw [color=blue,thick] (222111/111) -- (333221/22211);
\draw [color=blue,thick] (222211/1111) -- (222111/111);
\draw [color=blue,thick] (222211/1111) -- (333321/22221);
\draw [color=blue,thick] (222221/11111) -- (222211/1111);
\draw [color=blue,thick] (332211/2211) -- (443321/33221);
\draw [color=blue,thick] (333221/22211) -- (332211/2211);
\draw [color=blue,thick] (333221/22211) -- (444321/33321);
\draw [color=blue,thick] (333321/22221) -- (333221/22211);
\draw [color=blue,thick] (443321/33221) -- (554321/44321);
\draw [color=blue,thick] (444321/33321) -- (443321/33221);
\draw [color=blue,thick] (554321/44321) -- (654321/54321);
\end{tikzpicture}
\]
\caption{$\suppfn{6} = \ncn{6}$.  One representative of each equivalence class is drawn.}
\label{fig:fsupp_6}
\end{figure}
%

\subsection{Special cases of the conjecture}

It is simple to show that Conjecture~\ref{con:fsupport} holds for horizontal strips.  Indeed, 
$s_A$ for a horizontal strip $A$ is completely determined by $\rowsk{1}{A}=\rows{A}$.  In fact, we see that $s_A$ in this case is the complete homogeneous symmetric function $h_{\rows{A}}$.  It is well known (see, for example, \cite[Example~I.7.9(b)]{Mac95}) that $h_{\rows{A}}-h_{\rows{B}}$ is Schur-positive if and only if $\rows{A} \domleq \rows{B}$. Thus, if $\rows{A} \domleq \rows{B}$, then $s_A - s_B$ is Schur-positive, which implies that $\fsupp{A} \supseteq \fsupp{B}$.  

In Section~\ref{sec:multfree}, we will completely determine the poset $\suppfn{n}$ restricted to $F$-multiplicity free skew shapes (in which case $\fsupp{A} \supseteq \fsupp{B}$ is equivalent to $s_A - s_B$ being $F$-positive), from which it will follow that Conjecture~\ref{con:fsupport} holds in that case.

The remainder of this section is devoted to a proof of Conjecture~\ref{con:fsupport} for a special class of ribbons, which we now define.

\begin{definition}
A ribbon is said to be \emph{elongated} if all its rows have length at least two.
\end{definition}

\begin{theorem}\label{thm:ribbons}
Elongated ribbons $A$ and $B$ of the same size satisfy
$\fsupp{A} \supseteq \fsupp{B}$ if and only if $\rowsk{k}{A} \domleq \rowsk{k}{B}$ for all $k$.
\end{theorem}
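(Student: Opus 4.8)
The ``only if'' direction is exactly Theorem~\ref{thm:fsupport} specialized to ribbons, so the entire content is the ``if'' direction: assuming $\rowsk{k}{A}\domleq\rowsk{k}{B}$ for all $k$, I want to conclude $\fsupp{A}\supseteq\fsupp{B}$. The plan is to reduce everything to a single clean description of the $F$-support of an elongated ribbon and then let dominance do the work. First I would record what the hypothesis really says here. By Proposition~\ref{pro:rectsineq} it is equivalent to $\colsk{\ell}{A}\domleq\colsk{\ell}{B}$ for all $\ell$; in particular, taking $k=\ell=1$, it already forces the two ``outer'' inequalities $\rows{A}\domleq\rows{B}$ and $\cols{A}\domleq\cols{B}$, and these are the only pieces of the hypothesis I expect to need.

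The engine of the proof will be the following characterization, which I would isolate as a lemma: for an elongated ribbon $R$ with $n$ boxes,
\[
\fsupp{R}=\{\gamma : |\gamma|=n,\ \rows{\gamma}\domleq\cols{R}^t \text{ and } \cols{\gamma}\domleq\rows{R}^t\}.
\]
The inclusion $\subseteq$ is immediate from Proposition~\ref{pro:f_extreme_fillings}(a),(b): any $\gamma\in\fsupp{R}$ satisfies both displayed dominance bounds. The reverse inclusion --- that every $\gamma$ obeying both bounds is actually realized as the descent composition of some SYT of $R$ --- is the real work, and I return to it below.

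Granting the lemma, the theorem falls out by a transpose-and-transitivity argument. Recall that on partitions of a fixed size, transposition reverses dominance. From $\rows{A}\domleq\rows{B}$ and $\cols{A}\domleq\cols{B}$ I therefore get $\rows{B}^t\domleq\rows{A}^t$ and $\cols{B}^t\domleq\cols{A}^t$. Now take any $\gamma\in\fsupp{B}$; applying the lemma to $B$ gives $\rows{\gamma}\domleq\cols{B}^t$ and $\cols{\gamma}\domleq\rows{B}^t$, and chaining with the two inequalities just obtained yields $\rows{\gamma}\domleq\cols{A}^t$ and $\cols{\gamma}\domleq\rows{A}^t$. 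Applying the lemma to $A$ shows $\gamma\in\fsupp{A}$, so $\fsupp{A}\supseteq\fsupp{B}$. As a consistency check, an elongated ribbon with $k$ rows has $\rows{R}$ of length $k$ and $\cols{R}$ of length $n-k+1$, so $\rows{A}\domleq\rows{B}$ forces $A$ to have at least as many rows as $B$ while $\cols{A}\domleq\cols{B}$ forces at most as many, pinning the two row-counts to be equal.

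The main obstacle is the $\supseteq$ half of the lemma, and this is exactly where ``elongated'' must be used. Here I would argue constructively. The descent set of $\gamma$ partitions $1,2,\ldots,n$ into consecutive ascending runs of sizes $\gamma_1,\gamma_2,\ldots$, and I must place these runs into the cells of $R$ so that each run fills a horizontal strip (no two of its entries in one column) while consecutive runs drop to a strictly lower row at each descent. The bound $\rows{\gamma}\domleq\cols{R}^t$ is precisely the ``room to the right'' needed to lay the runs down as horizontal strips --- it is the horizontal-strip inequality underlying the sharpness construction of Proposition~\ref{pro:f_extreme_fillings}(a) --- while the dual bound $\cols{\gamma}\domleq\rows{R}^t$ governs the forced vertical drops. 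I expect the cleanest route to be an explicit filling algorithm, in the spirit of the constructions already used in Section~\ref{sec:onedirection}, building $T$ run by run from the smallest entries upward and checking at each stage that the two dominance bounds keep the partial tableau extendable. The delicate bookkeeping is verifying that the runs never overflow a row, and this is also precisely the point at which a row of length one would break the argument, which is what I anticipate explaining the restriction to elongated ribbons.
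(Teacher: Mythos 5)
Your reduction is sound as far as it goes: for elongated ribbons the full overlap hypothesis does collapse to $\rows{A}\domleq\rows{B}$ and $\cols{A}\domleq\cols{B}$ (equivalently, $\rows{A}\domleq\rows{B}$ together with equality of the numbers of rows), and, granting your proposed lemma, the transpose-and-transitivity argument correctly yields $\fsupp{A}\supseteq\fsupp{B}$. The problem is that the lemma itself --- that for an elongated ribbon $R$ \emph{every} composition $\gamma$ with $\rows{\gamma}\domleq\cols{R}^t$ and $\cols{\gamma}\domleq\rows{R}^t$ actually occurs as the descent composition of some SYT of shape $R$ --- is never proved; you explicitly call it ``the real work'' and then offer only the expectation that a run-by-run filling algorithm will succeed. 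This is a genuine gap, and it is the entire content of the theorem: a greedy placement of the ascending runs of $\gamma$ can get stuck (runs get forced into column conflicts even when both dominance bounds hold, so some backtracking or global argument is unavoidable), and showing that \emph{some} placement always succeeds under just these two bounds is a delicate combinatorial argument at least as hard as the theorem you are trying to prove. Note also that your lemma is strictly stronger than Theorem~\ref{thm:ribbons}: it pins down $\fsupp{R}$ exactly, a statement the paper never establishes even after proving the theorem, so you have reduced the theorem to an unproven assertion stronger than the theorem itself.

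For comparison, the paper's proof takes a different and complete route: it first shows, by an explicit descent-preserving rearrangement of SYT entries (with a two-case analysis built around ``paired'' entries), that the $F$-support of an elongated ribbon is unchanged when two adjacent rows are swapped so that the longer row moves up; hence $\fsupp{A}=\fsupp{A^\geq}$, where $A^\geq$ has weakly decreasing row lengths. It then invokes the Schur-positivity criterion of \cite[Theorem~3.3]{KWvW08} for ribbons with weakly decreasing row lengths, and Schur-positivity of $s_{A^\geq}-s_{B^\geq}$ gives the desired $F$-support containment. Your proposed characterization of $\fsupp{R}$ does appear consistent with small cases (for instance the ribbons with row lengths $(2,2)$, $(2,3)$, $(2,2,2)$, and $(2,2,2,2)$), so the strategy may well be salvageable, but as written the proposal establishes nothing beyond the direction already covered by Theorem~\ref{thm:fsupport}.
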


\begin{proof}
By Theorem~\ref{thm:fsupport}, we need only prove the ``if'' direction.  A key simplification for elongated ribbons is that $\rowsk{k}{A} \domleq \rowsk{k}{B}$ for all $k$ is equivalent to $\rowsk{k}{A} \domleq \rowsk{k}{B}$ for $k=1,2$.   

So first suppose $\rowsk{1}{A} = \rows{A} \domleq \rows{B}$ for elongated ribbons $A$ and $B$.  This implies that $A$ has at least as many (nonempty) rows as $B$.  On the other hand, $\rowsk{2}{A}$ is just a sequence of ones of length equal to one less than the number of rows of $A$.  Thus $\rowsk{2}{A} \domleq \rowsk{2}{B}$ implies that $B$ has at least as many rows as $A$.  So our rows condition is equivalent to the fact that $\rows{A} \domleq \rows{B}$ and that $A$ and $B$ have an equal number of rows.  

Our proof is facilitated by \cite[Theorem~3.3]{KWvW08}, which considers ribbons whose row lengths from top to bottom are weakly decreasing.  In this case, their theorem says that $s_A - s_B$ is Schur-positive if and only if $\rows{A} \domleq \rows{B}$ and $A$ and $B$ have equal numbers of rows.  For our purposes, we get that if $A$ and $B$ are elongated ribbons with weakly decreasing rows lengths and $\rowsk{k}{A} \domleq \rowsk{k}{B}$ for all $k$, then $\fsupp{A} \supseteq \fsupp{B}$.  Therefore, it suffices to show that for elongated ribbons $A$, we have $
\fsupp{A} = \fsupp{A^\geq}$, where $A^\geq$ denotes the ribbon obtained from $A$ by sorting its row lengths into weakly decreasing order from top to bottom.  Moreover, it suffices to show that the $F$-support of an elongated ribbon $A$ is preserved when we switch two adjacent rows of $A$ where the lower row is longer than the upper row; this is the result for which we will now give a combinatorial proof.

Consider the setup shown in Figure~\ref{fig:ribbon_proof}.  This shows two adjacent rows of an SYT $T$ with descent composition $\alpha$ and shape $A$, where $A$ is an elongated ribbon.  We assume that row $i+1$ is strictly longer than row $i$, i.e, that $\ell > k$. Starting with $T$, we wish to form an SYT with descent composition $\alpha$ and shape $A'$, where $A'$ is obtained from $A$ by switching the lengths of the rows $i$ and $i+1$.  Our plan is to move $\ell-k$ entries of $T$ from row $i+1$ up to row $i$ so as to preserve the descent set.  After moving entries, we will sort our rows into weakly decreasing order.  We will need to check that the result still has descent composition $\alpha$ and that the columns are strictly increasing at each of the places marked with the thick lines in columns $j_1$, $j_2$ and $j_3$.
\begin{figure}
\[
\begin{tikzpicture}[scale=0.45]
\draw (0,0) rectangle (1,1);
\draw (0,1) rectangle (1,2);
\draw (1,1) rectangle (5,2);
\draw (5,1) rectangle (6,2);
\draw (5,2) rectangle (6,3);
\draw (6,2) rectangle (8,3);
\draw (8,2) rectangle (9,3);
\draw (8,3) rectangle (9,4);
\draw[dashed] (0,0) -- (-1.8,0);
\draw[dashed] (0,1) -- (-1.8,1);
\draw[dashed] (9,3) -- (10.8,3);
\draw[dashed] (9,4) -- (10.8,4);
\draw (0.5,0.5) node {$d$};
\draw (0.5,1.5) node {$c_1$};
\draw (5.5,1.5) node {$c_\ell$};
\draw (5.5,2.5) node {$b_1$};
\draw (8.5,2.5) node {$b_k$};
\draw (8.5,3.5) node {$a$};
\draw (-6,1.5) node[anchor=west] {row $i+1$};
\draw (-6,2.5) node[anchor=west] {row $i$};
\fill (0,0.9) rectangle (1,1.1);
\fill (5,1.9) rectangle (6,2.1);
\fill (8,2.9) rectangle (9,3.1);
\draw (0.5,-1) node {$j_1$};
\draw (5.5,-1) node {$j_2$};
\draw (8.5,-1) node {$j_3$};
\draw (-2,-1) node {columns};
\end{tikzpicture}
\]
\caption{The setup for the proof of Theorem~\ref{thm:ribbons}.}
\label{fig:ribbon_proof}
\end{figure}
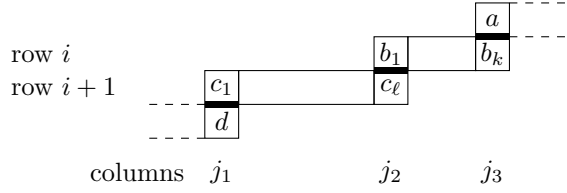

To begin, if $b_r$ in row $i$ is a descent and $b_r+1$ appears in row $i+1$ as $c_s$, then we consider $b_r$ and $c_s$ as being \emph{paired}.  Paired elements will always remain in their current rows except as described below.  There are two cases to consider according to whether or not there exist paired elements.  

Suppose that there is at least one pair.  We know there are at least $\ell-k$ non-paired entries in row $i+1$, so move the largest $\ell-k$ non-paired entries of row $i+1$ up to row $i$.  Since paired elements remain in their current rows, the descent set is preserved.  Since row $i$ only gains elements, there will still be a strict increase in column $j_3$.  Since there is at least one pair, there will still be a strict increase in column $j_2$.   In most cases, we will keep a strict increase in column $j_1$ since we moved the largest non-paired elements out of row $i+1$.  
However, consider the remaining case when there is a full set of $k$ pairs and $c_1$ gets moved, resulting in the loss of the strict increase in column $j_1$.  Suppose that, after this moving takes place, $c_s$ is the entry in the leftmost box of row $i+1$.  Since $c_s > d$, our technique will be to switch $c_s$ and $d$.  
The result will clearly be an SYT.  We know that $c_s$ is paired, with $c_s-1$ appearing in row $i$.  Therefore, $c_s-1$ will remain a descent.  Since $c_s > d$, whether or not $c_s$, $d$ or $d-1$ are descents will be unaffected by the switch of $c_s$ and $d$.  We conclude that the descent set is preserved, and we have the desired SYT of shape $A'$ and descent set $\alpha$.  

Now suppose there are no pairs.  In this case, read along row $i+1$ from right to left.  Taking the elements $c_s$ of row $i+1$ one at a time, move $c_s$ up to the appropriate spot along row $i$.  However, if doing so would violate the strict increase in column $j_2$, then leave $c_s$ in row $i+1$ and move on to consider $c_{s-1}$, stopping once we have moved up $\ell-k$ elements.  If $c_s$ remains in the bottom row, all subsequent elements $c_t$ will be able to move up to row $i$, since $c_t < c_s$.  As before, there will still be a strict increase in column $j_3$.  By design, there will be a strict increase in column $j_2$.  Since $k\geq 2$, $c_1$ will remain in position, thus preserving the strict increase in column $j_1$.  Since there are no paired elements, the only way we could change the descent set would be if $c_s$ stayed in row $i+1$ while $c_s-1$ moved from row $i+1$ to row $i$.  This would imply that $c_s-1=c_{s-1}$, which is impossible for the following reason: when we attempted to move $c_s$ up to row $i$ and failed, it must have been because the entry $b$ in the leftmost box of row $i$ at that time was strictly between the values $c_{s-1}$ and $c_s$.  We conclude that the descent set is preserved, as required.  
\end{proof}

One might wonder if the row overlap condition might imply something stronger than $F$-support containment in the special case of elongated ribbons.  More\linebreak precisely, does Theorem~\ref{thm:ribbons} still hold if we replace the condition ``$\fsupp{A} \supseteq \fsupp{B}$'' by ``$\ssupp{A} \supseteq \ssupp{B}$'' or by ``$s_A - s_B$ is $F$-positive''?  The answer is ``no'' for both possibilities, as can be seen by letting $A=632/21$ and $B=652/41$.  

One obvious next step would be to try to prove Conjecture~\ref{con:fsupport} for general ribbons.  In that regard, we note that the method of proof above has some freedom that we did not use.  First observe that the moving of elements described above also works if we want to move less than $\ell-k$ elements.  Perhaps more importantly, we started with \emph{any} given $T$ of shape $A$ and descent composition $\alpha$.  However, since we are only proving a result about supports, it is sufficient to choose a ``special'' or particular $T$ of shape $A$ and descent set $\alpha$, and there might be a helpful way to make this choice.

\subsection{A saturation-type consequence of the conjecture}

If Conjecture~\ref{con:fsupport} were true, we would get a version of the Saturation Theorem for skew shapes, as we now explain.  

For a partition $\lambda$ and a positive integer $n$, let $n\lambda$ denote the partition obtained by multiplying all the parts of $\lambda$ by $n$.  The Saturation Theorem \cite{KnTa99} (see also \cite{Buc00} and the survey \cite{Ful00}) can be stated in the following way: for partitions $\lambda$, $\mu$, $\nu$ and any positive integer $n$, we have
\[
\ssupp{\lambda/\mu} \supseteq \ssupp{\nu} \mbox{\ \ if and only if\ \ } \ssupp{n\lambda/n\mu} \supseteq \ssupp{n\nu}.
\]
This statement is written here in an overly complicated form since $\ssupp{\nu}$ is obviously just $\{\nu\}$ and similarly for $n\nu$, but the statement is in the form we need for the following analogue.  For a skew shape $A = \lambda/\mu$, we define $nA = n\lambda/n\mu$.  Then, David Speyer asked the author if the following skew analogue of the Saturation Theorem could possibly be true: for skew shapes $A$ and $B$ and any positive integer $n$,
\[
\ssupp{A} \supseteq \ssupp{B} \mbox{\ \ if and only if\ \ } \ssupp{nA} \supseteq \ssupp{nB}.
\]
This is false in the ``only if'' direction (which is the easy direction for the Saturation Theorem) since 
\[
\ssupp{4311/21} \supseteq \ssupp{4421/311}
\]
but 633 is contained in
\[
\ssupp{8842/622} \setminus \ssupp{8622/42}.
\]
We do not know of a counterexample for the ``if'' direction.

Alejandro Morales asked about connections between the present paper and the Saturation Theorem, and there does appear to be hope of a skew analogue of the Saturation Theorem if we move to $F$-supports.

\begin{question}\label{que:saturation}
For skew shapes $A$ and $B$ and any positive integer $n$, is it the case that 
\[
\fsupp{A} \supseteq \fsupp{B} \mbox{\ \ if and only if\ \ } \fsupp{nA} \supseteq \fsupp{nB}\ ?
\]
\end{question}

Since dominance order is preserved under the map that sends $\lambda$ to $n\lambda$ and the inverse map, a proof of Conjecture \ref{con:fsupport} would imply an affirmative answer to Question~\ref{que:saturation}.

\section{$F$-multiplicity-free skew shapes}\label{sec:multfree}

In \cite[Theorem~3.4]{BevW13}, Bessenrodt and van Willigenburg give a complete classification of those skew shapes $A$ that are \emph{$F$-multiplicity-free}, meaning that when $s_A$ is expanded in the $F$-basis, all the coefficients are 0 or 1.  In other words, $A$ is $F$-multiplicity-free if and only if all SYTx of shape $A$ have distinct descent sets.  Our first goal for this section is to completely classify those $F$-multiplicity-free $A$ and $B$ such that $s_A-s_B$ is $F$-positive.  By the definition of $F$-multiplicity-free, this is equivalent to classifying those $A$ and $B$ such that $\fsupp{A} \supseteq \fsupp{B}$.  Our second goal is to show that this classification implies the truth of Conjecture~\ref{con:fsupport} in the case of $F$-multiplicity-free shapes.

Let us begin with the aforementioned result from \cite{BevW13}.  Let $A^\circ$ denote $A$ rotated $180^\circ$, also known as the \emph{antipodal rotation} of $A$.  As is well known \cite[Exercise~7.56(a)]{ec2}, $s_A = s_{A^\circ}$.  Let us use $1^\ell$ to denote a sequence of $\ell$ copies of $1$, and $A \oplus B$ to denote the skew shape obtained by positioning $A$ immediately below and to the left of $B$ in such a way that $A$ and $B$ have no rows or columns in common.  For example, $(1^2) \oplus (2)$ can also be written as $311/1$.

\begin{theorem}[\cite{BevW13}]\label{thm:bevw}  
A skew shape $A$ of size $n$
is $F$-multiplicity-free if and only if, up to transpose, $A$ or $A^\circ$ is one of 
\renewcommand{\theenumi}{\roman{enumi}}
\begin{enumerate}
\item $(3,3)$ if $n=6$,
\item $(4,4)$ if $n=8$,
\item $(n-2,2)$ if $n\geq4$,
\item $(n-\ell,1^\ell)$ for $0 \leq \ell \leq n-1$,
\item $(1^\ell) \oplus (n-\ell)$ for $1 \leq \ell \leq n-1$. 
\end{enumerate}
\end{theorem}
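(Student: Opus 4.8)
The plan is to work from the characterization recorded just before the statement: by Theorem~\ref{thm:fexpansion}, a skew shape $A$ is $F$-multiplicity-free exactly when the map $T \mapsto \comp{T}$ sending each SYT of shape $A$ to its descent composition is injective. Both directions will be organized around this reformulation, after two symmetry reductions. First, $s_A = s_{A^\circ}$, so $A$ and $A^\circ$ have identical $F$-expansions and it suffices to treat one member of each antipodal pair. Second, transposing tableaux is a bijection from the SYT of $A$ to those of $A^t$ under which each descent set $S \subseteq [n-1]$ is replaced by its complement $[n-1]\setminus S$ (if $i+1$ is below $i$ in $T$, it is to the right in $T^t$, and vice versa); since complementation is a bijection, the descent map is injective for $A$ if and only if it is for $A^t$. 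These facts reduce the ``if'' direction to one representative of each family and justify the phrase ``up to transpose, $A$ or $A^\circ$'' in the ``only if'' direction.

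For the ``if'' direction I would verify injectivity family by family. Hooks $(n-\ell,1^\ell)$ and the disconnected shapes $(1^\ell)\oplus(n-\ell)$ admit a uniform argument: an SYT is determined by the set $C$ of entries placed in its single (non-trivial) column, and one checks that $i$ is a descent if and only if $i+1\in C$, so that $C=\{d+1 : d\in\text{Des}(T)\}$ is recovered from the descent set; hence the descent map is injective. For the two-row families the key observation, which also powers the hard direction, is that in a two-row shape one can only descend from the top row to the bottom row, so if $R$ denotes the set of bottom-row entries then the descent set is $\{r-1 : r\in R,\ r-1\notin R\}$, the set of left endpoints (shifted) of the maximal runs of consecutive integers in $R$. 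For $(n-2,2)$ the column-strictness (ballot) condition is rigid enough that distinct valid $R$ always have distinct run structures, giving injectivity; the sporadic shapes $(3,3)$ and $(4,4)$ are then finite checks (five and fourteen tableaux, all with distinct descent sets).

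The substance is the ``only if'' direction, which I would prove in contrapositive form: every skew shape not in the list admits two SYT with a common descent set. The engine is again the run computation. Already for $(4,3)$ the bottom-row sets $\{3,4,6\}$ and $\{3,6,7\}$ are both valid and both have run-start structure $\{3,6\}$, hence both give descent set $\{2,5\}$; such ``run redistribution'' pairs handle all two-row shapes with second row of length at least $3$ other than $(3,3)$ and $(4,4)$, and in particular all $(k,k)$ with $k\ge 5$. I would then isolate a short library of minimal obstructions ($(4,3)$ and its transpose, together with the smallest non-hook configurations having three or more rows and the smallest disconnected shapes other than $(1^\ell)\oplus(n-\ell)$), show that any non-listed shape contains one of these as a sub-configuration occupying a block of consecutive values, and fill the complementary boxes identically in the two tableaux so that the local coincidence survives globally.

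The main obstacle is precisely this last promotion step. Extending a local pair of coinciding fillings to the whole shape does not automatically preserve the global descent set: the descents straddling the boundary of the modified block --- between the largest outside value below the block and the smallest block value, and between the largest block value and the next outside value --- can differ between the two tableaux even when the descents inside the block agree. The crux is therefore to choose both the placement of the obstruction and the surrounding standardization so that these boundary descents are forced to coincide, and then to verify that the case analysis (connected versus disconnected, number of rows, and the presence of a $2\times 2$ block away from the ends) is genuinely exhaustive. I expect that organizing this case analysis, rather than any single tableau construction, is where most of the effort lies.
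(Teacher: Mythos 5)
First, a point of comparison: the paper does not prove Theorem~\ref{thm:bevw} at all; it is imported verbatim from \cite{BevW13} (their Theorem~3.4), so there is no internal proof to measure your attempt against. Judged on its own terms, your ``if'' direction is essentially sound: the reformulation via Theorem~\ref{thm:fexpansion} (injectivity of the descent-composition map on SYT), the antipodal and transpose reductions (descent sets complement under transposition), the recovery of the leg/column set $C=\{d+1 : d\in\mathrm{Des}(T)\}$ for hooks and for $(1^\ell)\oplus(n-\ell)$, and the run-start analysis for two-row shapes are all correct. (One small correction: for $(n-2,2)$ no appeal to column-strictness is needed --- any two distinct two-element bottom-row sets already have distinct run-start sets, so injectivity there is purely combinatorial.)

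The genuine gap is in the ``only if'' direction, and it is exactly the step you yourself flag. Your plan is to exhibit a finite library of minimal obstructions, show every non-listed shape contains one ``occupying a block of consecutive values,'' and extend two colliding local fillings identically outside the block. But the extension step is false as stated, and your own example shows why: in the $(4,3)$ collision with bottom-row sets $\{3,4,6\}$ and $\{3,6,7\}$, the value $7$ sits in the top row of one filling and the bottom row of the other, so whether the boundary value $7$ is a descent in a larger ambient tableau depends on which filling was used --- the global descent sets need not agree even though the local ones do. Repairing this requires (i) a precise statement of which collisions are ``boundary-compatible'' (e.g.\ both local fillings must place the minimal and maximal block values in rows that interact identically with the surrounding entries), (ii) a proof that every shape outside the five families admits such a compatibly placed obstruction, and (iii) verification that the resulting case analysis (connected vs.\ disconnected, two rows vs.\ three or more, overlap sizes, components of disconnected shapes) is exhaustive. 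None of these is carried out; what you have is a correct strategy outline plus an honest acknowledgment of the hole, not a proof. Since this direction is the entire substance of the classification, the proposal as written does not establish the theorem.
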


Notice that the first four types in the list above are \emph{straight shapes}, meaning that they take the form $\lambda/\emptyset$ for some partition $\lambda$.  We now state the main result of this section.

\begin{theorem}\label{thm:multfree}
Let $A$ and $B$ be $F$-multiplicity-free skew shapes of size $n$.  Then $s_A = s_B$ if and only if $B \in \{A, A^\circ\}$.  Otherwise, $s_A - s_B$ is $F$-positive (equivalently $\fsupp{A} \supseteq \fsupp{B}$) if and only if one of the following conditions holds up to antipodal rotation of $A$ and/or $B$:
\begin{enumerate}
\item $A = (1^\ell) \oplus (n-\ell)$ and $B \in \{(n-\ell, 1^\ell), (n-\ell+1, 1^{\ell-1})\}$ for $1 \leq \ell \leq n-1$;
\item $A = (1^2) \oplus (n-2)$ and $B = (n-2, 2)$ with $n\geq4$;
\item $A= (1^{n-2}) \oplus (2)$ and $B = (2,2, 1^{n-4})$ with $n\geq4$.
\end{enumerate}
\end{theorem}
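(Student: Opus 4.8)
The plan is to reduce the entire statement to a comparison of $F$-supports and then to compute those supports explicitly for the five families in Theorem~\ref{thm:bevw}. Since $A$ and $B$ are $F$-multiplicity-free, $s_A-s_B$ is $F$-positive exactly when $\fsupp{A}\supseteq\fsupp{B}$, and $s_A=s_B$ exactly when $\fsupp{A}=\fsupp{B}$; so it suffices to determine all support containments and equalities among these shapes. Throughout I would use Theorem~\ref{thm:fexpansion} to identify $\fsupp{A}$ with the set of descent compositions of SYT of shape $A$, and I would exploit two symmetries that cut down the casework and explain the qualifiers in the statement: antipodal rotation preserves $F$-support (because $s_A=s_{A^\circ}$), and transposition conjugates every descent composition, so that $\fsupp{A}\supseteq\fsupp{B}$ holds if and only if $\fsupp{A^t}\supseteq\fsupp{B^t}$.

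First I would compute the supports. For the hook $(n-\ell,1^\ell)$, the value $1$ sits in the corner and an SYT is determined by the $\ell$-subset of $\{2,\dots,n\}$ placed in the leg; a descent occurs at $i$ precisely when $i+1$ lies in the leg, so $\fsupp{(n-\ell,1^\ell)}$ is the set of all compositions of $n$ into exactly $\ell+1$ parts. For $(1^\ell)\oplus(n-\ell)$ the arm and the leg share neither a row nor a column, so an SYT is an \emph{arbitrary} splitting of $\{1,\dots,n\}$ into an $(n-\ell)$-element arm and an $\ell$-element leg; the same descent rule applies except that now $1$ may also lie in the leg, and one checks the support is exactly the compositions of $n$ into $\ell$ or $\ell+1$ parts (the case $A_1$ of Example~\ref{exa:intro} is a quick sanity check). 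For the two-row shape $(n-2,2)$, a short analysis of the two entries of the second row shows that at least one descent always occurs, that there are at most two, and that no two descents can be consecutive; this yields that $\fsupp{(n-2,2)}$ consists of the $2$-part compositions $(a,b)$ with $a,b\geq2$ together with the $3$-part compositions whose middle part is at least $2$. The sporadic shapes $(3,3)$ and $(4,4)$ are then handled by direct finite computation.

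With the supports in hand the claimed containments are immediate. Writing $\fsupp{(1^\ell)\oplus(n-\ell)}=\fsupp{(n-\ell,1^\ell)}\cup\fsupp{(n-\ell+1,1^{\ell-1})}$ exhibits (1); the inclusion $\fsupp{(n-2,2)}\subseteq\fsupp{(1^2)\oplus(n-2)}$ (the compositions into $2$ or $3$ parts) gives (2); and (3) is the transpose of (2). The substantive work is proving there are no \emph{other} containments. Here I would first invoke Proposition~\ref{pro:f_extreme_fillings}, i.e.\ the necessary conditions $\rows{A}\domleq\rows{B}$ and $\cols{A}\domleq\cols{B}$; computing the row- and column-length partitions for each family (for instance the hook $(n-\ell,1^\ell)$ and the shape $(1^\ell)\oplus(n-\ell)$ share the row partition $(n-\ell,1^\ell)$ but have column partitions $(\ell+1,1^{n-\ell-1})$ and $(\ell,1^{n-\ell})$) prunes most pairs at a stroke. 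For the surviving pairs I would exhibit an explicit composition in $\fsupp{B}\setminus\fsupp{A}$, using the grading by number of parts: two distinct $\oplus$-shapes are incomparable unless $\{\ell,\ell+1\}\supseteq\{\ell',\ell'+1\}$; a single hook support cannot contain an $(n-2,2)$-support, which meets two distinct part-counts; and an $(n-2,2)$-support contains no full hook or $\oplus$-support, since it omits the extreme $2$-part compositions and all $3$-part compositions with middle part $1$.

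I expect the main obstacle to be the bookkeeping in this last step. Because Theorem~\ref{thm:bevw} lists shapes only up to transpose and antipodal rotation, care is needed to ensure that every ordered pair of \emph{actual} $F$-multiplicity-free shapes is treated exactly once, that the transpose and rotation symmetries are applied consistently, and that $(3,3)$ and $(4,4)$ create neither an unexpected containment nor a spurious support equality. Controlling the equality statement is part of the same accounting: $s_A=s_{A^\circ}$ gives one direction, and the explicit supports above show that no two shapes outside a rotation pair share a support (self-conjugate hooks such as $(\ell+1,1^\ell)$ cause no trouble, since there $A^t=A$). Packaging the many small ``missing composition'' verifications uniformly, rather than pair by pair, will be the part most in need of care.
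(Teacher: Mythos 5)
Your proposal is correct and follows essentially the same route as the paper's proof: reduce everything to $F$-support containment via multiplicity-freeness, compute the explicit $F$-expansions of the five families of Theorem~\ref{thm:bevw} (your direct SYT/descent computations reproduce exactly the Pieri-rule identity~\eqref{equ:multfree} and the hook and two-row expansions cited from \cite{BevW13}), prune incomparable pairs with Proposition~\ref{pro:f_extreme_fillings}, and settle $(3,3)$ and $(4,4)$ by finite check. Your use of descent-set complementation under transposition to obtain case (c) is the same mechanism the paper invokes in the guise of the extended $\omega$ involution, so this too is the paper's argument in different clothing.
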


Observe that the skew shapes $A$ and $B$ in (c) are just the transposes of those in (b), while the transposes of $A$ and $B$ from (a) will be another pair from (a).  The subposet of $\suppfn{5}$ consisting of $F$-multiplicity-free skew shapes is depicted in Figure~\ref{fig:suppfn4}.
\begin{figure}
\[
\begin{tikzpicture}[scale=1.4]
\tikzstyle{every node}=[draw, thick, shape=circle, inner sep=3pt]; 
\Yboxdim6pt;
\draw (0,0) node (5) {\yng(5)};
\draw (2,0) node (41) {\yng(4,1)};
\draw (3,0) node (32) {\yng(3,2)};
\draw (4,0) node (311) {\yng(3,1,1)};
\draw (5,0) node (221) {\yng(2,2,1)};
\draw (6,0) node (2111) {\yng(2,1,1,1)};
\draw (8,0) node (11111) {\yng(1,1,1,1,1)};
\draw (1,1.5) node (51/1) {\young(:\ \ \ \ ,\ )};
\draw (3,1.5) node (411/1) {\young(:\ \ \ ,\ ,\ )};
\draw (5,1.5) node (3111/1) {\young(:\ \ ,\ ,\ ,\ )};
\draw (7,1.5) node (21111/1) {\young(:\ ,\ ,\ ,\ ,\ )};
\draw[color=blue,thick] (5) -- (51/1) -- (41) -- (411/1) -- (311) -- (3111/1) -- (2111) -- (21111/1) -- (11111);
\draw[color=blue,thick]  (32) -- (411/1);
\draw[color=blue,thick] (221) -- (3111/1);
\end{tikzpicture}
\]
\caption{The subposet of $\suppfn{5}$ consisting of $F$-multiplicity-free skew shapes.  For each $A$ drawn, $A^\circ$ is also a member of the equivalence class.}
\label{fig:suppfn4}
\end{figure}
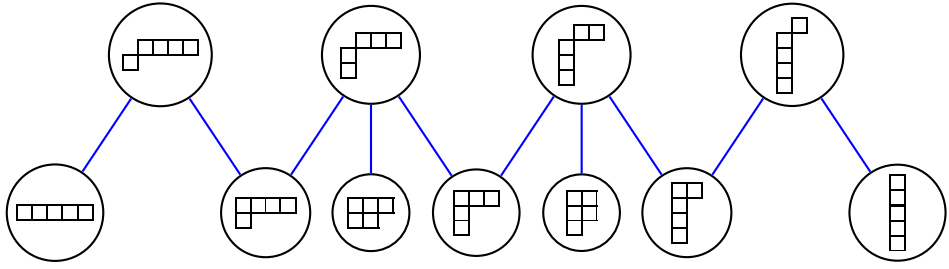

\begin{proof}[Proof of Theorem~\ref{thm:multfree}]  Since $s_A = s_{A^\circ}$, we know that if $B \in \{A, A^\circ\}$ then $s_A = s_B$.  The converse is a consequence of the analysis below that proves the bulk of the statement of the theorem. 

If $\fsupp{A} \supseteq \fsupp{B}$, then Proposition~\ref{pro:f_extreme_fillings} tells us that $\rows{A} \domleq \rows{B}$ and $\cols{A} \domleq \cols{B}$.  If $A$ and $B$ are straight shapes, then the latter inequality is equivalent to $\rows{A}^t \domleq \rows{B}^t$ and hence $\rows{A} \domgeq \rows{B}$.  Thus $\rows{A}=\rows{B}$ and so $A=B$.  Therefore, the straight shapes given in (i)--(iv) of Theorem~\ref{thm:bevw} are all incomparable according to $F$-support containment.

It remains to consider comparabilities involving the skew shapes $A=(1^\ell) \oplus (n-\ell)$ for $1 \leq \ell \leq n-1$.  Note that this class is mapped to itself under the transpose operation.  
The rest of the proof is a relatively routine checking of cases involving some explicit expansions of skew Schur functions.  Taking one of our skew shapes to be of type (v) of Theorem~\ref{thm:bevw}, we will work backwards through the five possibilities for the type of the other skew shape.  
\begin{itemize}
\item[(v)] Let us first consider the case when $A = (1^\ell) \oplus (n-\ell)$ and $B = (1^m) \oplus (n-m)$ for $\ell < m$.  We have $\rows{A} \succ \rows{B}$ but $\cols{A} \prec \cols{B}$.  Proposition~\ref{pro:f_extreme_fillings} then tells us that $\fsupp{A}$ and $\fsupp{B}$ are incomparable.

\item[(iv)] By the Pieri rule \cite[Theorem~7.15.7]{ec2},  
\begin{equation}\label{equ:multfree}
s_{(1^\ell) \oplus (n-\ell)} = s_{(n-\ell, 1^\ell)} + s_{(n-\ell+1, 1^{\ell-1})}.
\end{equation}
Therefore $s_A - s_B$ is Schur-positive and hence $F$-positive for $A$ and $B$ from (a) of the current theorem.  

We next consider other comparabilities among those skew shapes of types (iv) and (v) of Theorem~\ref{thm:bevw}.  Note that the class (iv) is also mapped to itself under the transpose operation.  From \cite[Lemma~3.2]{BevW13}, we know that for $n\geq 1$ and $0 \leq \ell \leq n-1$, we have
\begin{equation}\label{equ:hook}
s_{(n-\ell, 1^\ell)} = \sum_\alpha F_\alpha,
\end{equation}
where the sum if over all compositions $\alpha$ of size $n$ with $\ell+1$ parts.  Using this and~\eqref{equ:multfree}, we deduce that the only comparabilities that exist between skew shapes of types (iv) and (v)  are those already given in (a) of the current theorem. 

\item[(iii)] Consider $(n-2,2)$ for $n\geq4$.  Again we refer to \cite[Lemma~3.2]{BevW13} which gives
\[
s_{(n-2,2)} = \sum_{i=2}^{n-2} F_{(i,n-i)} + \sum_{j=3}^{n-1} \sum_{i=1}^{j-2} F_{(i, j-i, n-j)}.
\]
Comparing with~\eqref{equ:multfree} and~\eqref{equ:hook}, comparabilities of $(n-2,2)$ with skew shapes of type $(1^\ell) \oplus (n-\ell)$ can only occur when $\ell=2$.  In this case, we have
\[
s_{(1^2) \oplus (n-2)} = \sum_{i=1}^{n-1} F_{(i,n-i)} + \sum_{j=2}^{n-1} \sum_{i=1}^{j-1} F_{(i, j-i, n-j)},
\]
and so
\[
s_{(1^2) \oplus (n-2)}- s_{(n-2,2)} = F_{(1,n-1)} + F_{(n-1,1)} + \sum_{j=2}^{n-1} F_{(j-1,1,n-j)}.
\]
In particular, $s_A-s_B$ is $F$-positive for $A$ and $B$ from (b) of the current theorem.  

The usual $\omega$ involution \cite[\S7.6 and Theorem~7.15.6]{ec2} can be extended to quasisymmetric functions in a way that preserves $F$-positivity; see \cite[Exercise~7.94(a)]{ec2} for one such extension, and \cite{McWapr} for further details and references.  Since applying this extended $\omega$ preserves $F$-positivity, we draw the desired analogous conclusion for comparabilities involving $(n-2,2)^t = (2,2,1^{n-4})$ of (c).  

\item[(ii), (i)] Finally, by direct computation with $n=6$ and $n=8$, we get that there are no comparabilities involving $(3,3)$, $(4,4)$ or their transposes.  
\end{itemize}
\end{proof}

\begin{remark}
If $A$ is $F$-multiplicity-free then Theorem~\ref{thm:fexpansion} implies that $A$ is \emph{Schur-multiplicity-free}, defined in the natural way.  Thus when $A$ and $B$ are $F$-multiplicity-free,  $s_A - s_B$ being Schur-positive is equivalent to $\ssupp{A} \supseteq \ssupp{B}$.  It is relatively easy to determine exactly when $s_A - s_B$ is Schur-positive in the case that $A$ and $B$ are $F$-multiplicity-free, as we now describe.  The straight shapes from (i)--(iv) of Theorem~\ref{thm:bevw} are obviously incomparable. Then it follows from \eqref{equ:multfree} that the conditions for $s_A - s_B$ to be Schur-positive are exactly as in Theorem~\ref{thm:multfree} but with conditions (b) and (c) deleted.  

Determining conditions for $s_A - s_B$ to be Schur-positive when $A$ and $B$ are \emph{Schur}-multiplicity-free seems to be a significantly harder problem.  See \cite{McvW09b} for the case of ribbons and \cite{Gut09} for the solution to $s_A = s_B$ in the Schur-multiplicity-free situation.  Both of these papers rely on a classification of skew shapes that are Schur multiplicity-free, which was given in \cite{Gut10,ThYo10}.
\end{remark}

With Theorem~\ref{thm:multfree} in place, we can now give our last piece of evidence in favor of Conjecture~\ref{con:fsupport}.

\begin{corollary}
Conjecture~\ref{con:fsupport} holds when $A$ and $B$ are  $F$-multiplicity-free skew shapes.
\end{corollary}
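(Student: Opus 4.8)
The plan is to leverage the two results already in hand: Theorem~\ref{thm:fsupport} gives the ``only if'' direction of Conjecture~\ref{con:fsupport} for free, and Theorem~\ref{thm:multfree} provides the \emph{complete} list of pairs $(A,B)$ of $F$-multiplicity-free shapes with $\fsupp{A}\supseteq\fsupp{B}$. Every such pair automatically satisfies the row-overlap dominance condition, again by Theorem~\ref{thm:fsupport}. So the entire content of the corollary reduces to the converse: assuming $\rowsk{k}{A}\domleq\rowsk{k}{B}$ for all $k$, I must show $(A,B)$ is one of the pairs enumerated in Theorem~\ref{thm:multfree}. Before starting the case analysis I would record two reductions. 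First, both $\fsupp{}$ and each $\rowsk{k}{}$ are invariant under antipodal rotation, so I may use the representatives of Theorem~\ref{thm:bevw}. Second, by Proposition~\ref{pro:rectsineq} the condition ``$\rowsk{k}{A}\domleq\rowsk{k}{B}$ for all $k$'' is equivalent to ``$\colsk{\ell}{A}\domleq\colsk{\ell}{B}$ for all $\ell$,'' and since transposing swaps $\rowsk{k}{}$ with $\colsk{k}{}$, the row-overlap comparability relation is preserved under transposing both shapes (just as $\omega$ preserves $F$-support containment in the proof of Theorem~\ref{thm:multfree}). Hence the transpose case (c) will follow from (b), and I may treat each representative as either a straight shape (types (i)--(iv)) or a shape $A_\ell=(1^\ell)\oplus(n-\ell)$ of type (v).

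The easy cases use only the $k=1$ and $\ell=1$ data. For two straight shapes, Proposition~\ref{pro:rectsineq} forces $\rows{A}\domleq\rows{B}$ and $\cols{A}\domleq\cols{B}$; since $\cols{}=\rows{}^t$ for straight shapes and transposition reverses dominance on partitions of equal size, these combine to give $\rows{A}=\rows{B}$, whence $A=B$. Thus distinct straight shapes are row-incomparable, matching their $F$-incomparability. The same two pieces of data settle two type-(v) shapes: $\rows{A_\ell}=(n-\ell,1^\ell)$ is monotone in $\ell$ in one direction and $\cols{A_\ell}=(\ell,1^{n-\ell})$ in the other, so the row condition forces $\ell=m$, i.e.\ $A=B$.

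The substantive case is type (v) against a hook or against $(n-2,2)$. Here I would compute the overlap partitions explicitly: $\rowsk{k}{A_\ell}=(1^{\ell-k+1})$ for $2\le k\le\ell$ (and $\emptyset$ otherwise), while the hook $(n-m,1^m)$ has $\rowsk{k}{}=(1^{m-k+2})$ for $2\le k\le m+1$, and $(n-2,2)$ has only $\rowsk{1}{}=(n-2,2)$ and $\rowsk{2}{}=(2)$ nonempty. Using the elementary fact that $(1^a)\domleq(1^b)$ if and only if $a\le b$, the $k=1$ inequality pins down $\ell\ge m$ and the $k=2$ inequality pins down $\ell\le m+1$, leaving exactly $m\in\{\ell-1,\ell\}$, which is precisely Theorem~\ref{thm:multfree}(a); the comparison with $(n-2,2)$ similarly forces $\ell=2$, giving (b). The finitely many shapes $(3,3)$, $(4,4)$ and their transposes (only $n=6,8$) I would dispatch by direct overlap computation, obtaining no comparabilities, exactly as on the $F$-support side.

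The main obstacle, and the point where care is essential, is that these are if-and-only-if statements: I must \emph{rule out spurious row-comparabilities} that the $F$-support analysis forbids. The delicate part is that low-$k$ inequalities alone are not enough—one must check \emph{all} $k$, including those where the target partition is empty. For instance $A_3=(1^3)\oplus(n-3)$ satisfies the $k=1$ and $k=2$ inequalities against $(n-2,2)$ yet fails at $k=3$, since $\rowsk{3}{A_3}=(1)\not\domleq\emptyset=\rowsk{3}{(n-2,2)}$. It is exactly these ``empty-target'' inequalities at high $k$ that trim the row-overlap comparabilities down to the $F$-support comparabilities of Theorem~\ref{thm:multfree}. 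Once every case matches, the set of pairs satisfying the row condition coincides with the set satisfying $\fsupp{A}\supseteq\fsupp{B}$, and the corollary follows.
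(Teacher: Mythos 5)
Your proposal is correct and follows essentially the same route as the paper: reduce to showing that row-overlap dominance forces one of the comparabilities listed in Theorem~\ref{thm:multfree}, handle straight shapes via the rows/cols duality of Proposition~\ref{pro:rectsineq} (forcing $A=B$), and then compare the explicit overlap sequences of $(1^\ell)\oplus(n-\ell)$ against each of the remaining types, with the empty high-$k$ overlaps ruling out spurious comparabilities. Your explicit computations $\rowsk{k}{A_\ell}=(1^{\ell-k+1})$ and $\rowsk{k}{(n-m,1^m)}=(1^{m-k+2})$, and the transpose reduction giving case (c) from (b), match the paper's equations \eqref{equ:overlaps5}--\eqref{equ:overlaps4} and its ``cols in place of rows'' step.
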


\begin{proof}
We wish to find all pairs of $F$-multiplicity-free skew shapes $A$ and $B$ of the same size satisfying $\rowsk{k}{A} \domleq \rowsk{k}{B}$ for all $k$, and show that such $A$ and $B$ satisfy one of the conditions of Theorem~\ref{thm:multfree}.  Since antipodal rotation preserves $F$-supports and row overlaps, if Conjecture~\ref{con:fsupport} holds for $A$ and $B$, it will automatically hold with $A^\circ$ in place of $A$ and/or with $B^\circ$ in place of $B$.  Therefore, we only need to consider the five classes of $F$-multiplicity-free shapes listed in Theorem~\ref{thm:bevw} and not their antipodal rotations.

First suppose that $A$ and $B$ are straight shapes and that $\rowsk{k}{A} \domleq \rowsk{k}{B}$ for all $k$.  By Proposition~\ref{pro:rectsineq}, we then also know that $\colsk{k}{A} \domleq \colsk{k}{B}$ for all $k$.  We have $\rowsk{1}{A} = \colsk{1}{A}^t \domgeq \colsk{1}{B}^t = \rowsk{1}{B}$.  Thus $\rowsk{1}{A} = \rowsk{1}{B}$ and so $A=B$ up to antipodal rotation.  

It remains to consider the case when $A$ and/or $B$ takes the form $(1^\ell) \oplus (n-\ell)$ for $1 \leq \ell \leq n-1$.  If $A$ takes this form, then
\begin{equation}\label{equ:overlaps5}
(\rowsk{1}{A}, \ldots, \rowsk{\ell}{A}) = ((n-\ell,1^\ell), 1^{\ell-1}, 1^{\ell-2}, \ldots, 1)),
\end{equation}
Let us work in the reverse order through the five possibilities from Theorem~\ref{thm:bevw} for the type of $B$.
\begin{itemize}
\item[(v)] If $B = (1^m) \oplus (n-m)$ for $m\neq \ell$ then it will be neither true that $\rowsk{k}{A} \domleq \rowsk{k}{B}$ for all $k$, nor $\rowsk{k}{B} \domleq \rowsk{k}{A}$ for all $k$.  In this case we will say that the \emph{row overlap sequences are incomparable}, and there is nothing to prove.
\item[(vi)] If $B$ is of type (iv) from Theorem~\ref{thm:bevw}, then $B = (n-m, 1^m)$ for some $0 \leq m \leq n-1$ and we have
\begin{equation}\label{equ:overlaps4}
(\rowsk{1}{B}, \ldots, \rowsk{m+1}{B}) = ((n-m,1^m), 1^{m}, 1^{m-1}, \ldots, 1)),
\end{equation}
Comparing~\eqref{equ:overlaps5} and~\eqref{equ:overlaps4}, we see that the
relevant row overlap sequence comparabilities are that $\rowsk{k}{A} \domleq \rowsk{k}{B}$ for all $k$ when $m=\ell$ or $m=\ell-1$.  These two possibilities for $m$ give exactly the conditions of (a) of Theorem~\ref{thm:multfree}. 

\item[(iii)] Let $B = (n-2,2)$.   For $\ell>2$, we have $\rowsk{1}{A} \prec \rowsk{1}{B}$, but $\rowsk{3}{A} \succ \rowsk{3}{B} = \emptyset$, so the row overlap sequences of $A$ and $B$ are incomparable.  If $\ell=2$, we get that $\rowsk{k}{A} \domleq \rowsk{k}{B}$ for all $k$.  In this case, (b) of Theorem~\ref{thm:multfree} is satisfied.  If $\ell=1$, the row overlap sequences are again incomparable.  If $B=(n-2,2)^t=(2,2,1^{n-4})$, then the analogous conclusions can be drawn by using $\mathrm{cols}$ in place of $\mathrm{rows}$ and (c) of Theorem~\ref{thm:multfree}. 
\item[(ii), (i)] When $B$ equals $(3,3)$, $(4,4)$ or one of their transposes, it is routine to check that the row overlap sequences of $A$ and $B$ are incomparable.  
\end{itemize}
We conclude that in all cases where $\rowsk{k}{A} \domleq \rowsk{k}{B}$ for all $k$, one of the conditions of Theorem~\ref{thm:multfree} is satisfied, so we have $\fsupp{A} \supseteq \fsupp{B}$, as required.  
\end{proof}

\section{Other quasisymmetric bases}\label{sec:conclusion}

It is natural to ask if other quasisymmetric function bases have a role to play in comparing skew Schur functions.  In this section, we look at three other bases, namely 
\begin{itemize}
\item the monomial quasisymmetric functions of Equation~\eqref{equ:monomials};
\item the  \emph{quasisymmetric Schur} basis of Haglund et al.\  \cite{HLMvW11}, whose elements we denote by $\QS_\alpha$;
\item the \emph{dual immaculate} basis of Berg et al.~\cite{BBSSZpr}, whose elements we denote by $D_\alpha$.
\end{itemize}
Examples of expansions in these bases appear in Table~\ref{tab:bases}.  The latter two bases are both new (introduced in 2008 and 2012 respectively) and are the subject of considerable current interest.

\renewcommand{\arraystretch}{1.0} 
\setlength{\tabcolsep}{6pt} 
\begin{table}
\[
\begin{array}{llll}
& 
\begin{tikzpicture}[scale=0.4]
\begin{scope}
\draw (-1.5,1.5) node {$A_1=$};
\draw[thick] (0,0) -- (1,0) -- (1,2) -- (3,2) -- (3,3) -- (1,3) -- (1,2) -- (0,2) -- cycle;
\draw (2,3) -- (2,2);
\draw (0,1) -- (1,1);
\end{scope}
\end{tikzpicture}
&&
\begin{tikzpicture}[scale=0.4]
\begin{scope}
\draw (-1.5,1) node {$A_3=$};
\draw[thick] (0,0) -- (2,0) -- (2,2) -- (0,2) -- cycle;
\draw (1,0) -- (1,2);
\draw (0,1) -- (2,1);
\end{scope}
\end{tikzpicture}
\smallskip \\ 
\mbox{Schur expansion}
& 
s_{31} + s_{211}
&&
s_{22}
\smallskip \\ 
\mbox{$F$-expansion}
& 
F_{31} + F_{13} + F_{22} + {} 
&&
F_{22} + F_{121}
 \\
&
F_{211} + F_{121} + F_{112}
&&
\smallskip \\
\mbox{$M$-expansion}
& 
M_{31} + M_{13} + M_{22} + 3M_{211} + {}
&&
M_{22} + M_{211} + M_{121} + {}
\\
&
3M_{121} + 3M_{112} + 6M_{1111}
&&
M_{112} + 2M_{1111}
\smallskip \\
\mbox{$\QS$-expansion}
& 
\QS_{31} + \QS_{13} + \QS_{211} + \QS_{121} + \QS_{112}
&&
\QS_{22}
\smallskip \\
\mbox{$D$-expansion}
&
D_{31} + D_{211}
&&
D_{22} - D_{13}
\end{array}
\]
\caption{The expansion of two skew Schur functions from Example~\ref{exa:intro} in the bases of Section~\ref{sec:conclusion}.}
\label{tab:bases}
\end{table}

Our goals for this section are to show all the implications appearing in Figure~\ref{fig:moreimplications} that did not already appear in Figure~\ref{fig:implications}, to show that all the implications except the rightmost one are strict in the sense that the converse implications are false, and to show that the set of implications in Figure~\ref{fig:moreimplications} is complete in certain sense. 
\renewcommand{\arraystretch}{0} 
\setlength{\tabcolsep}{0pt} 
\begin{figure}
\begin{center}
\begin{tikzpicture}[scale=0.8]
\tikzstyle{every node}=[draw, inner sep=3pt]; 
\draw (0,3.7) node {\small$s_A - s_B$ is $D$-positive};
\draw (0,1.85) node {\begin{tabular}{l}\small$s_A - s_B$ is Schur-pos.\\[1.5ex] \small$s_A - s_B$ is $\QS$-positive\end{tabular}};
\draw (0,0) node {\small$s_A - s_B$ is $F$-positive};
\draw (5.1,2.15) node {\begin{tabular}{c}\small$\osupp{D}{A} \supseteq \osupp{D}{B}$  \\[1.5ex] \small$\ssupp{A} \supseteq \ssupp{B}$ \\[1.5ex] \small
$\osupp{\QS}{A} \supseteq \osupp{\QS}{B}$ \end{tabular}};
\draw (5.1,0) node {\small$\fsupp{A} \supseteq \fsupp{B}$};
\draw (10.9,0) node {\begin{tabular}{l}\small$\rowsk{k}{A} \domleq \rowsk{k}{B}\ \forall k$ \\[1.5ex]
\small$\colsk{\ell}{A} \domleq \colsk{\ell}{B}\ \forall \ell$ \\[1.5ex] 
\small$\rects{k}{\ell}{A} \leq \rects{k}{\ell}{B}\ \forall k,\ell$\end{tabular}};
\draw (0,-1.6) node {\small$s_A - s_B$ is $M$-positive};
\draw (5.1,- 1.6) node   {\small$\osupp{M}{A} \supseteq \osupp{M}{B}$};
\tikzstyle{every node}=[]; 
\draw (2.5,1.8) node {$\Rightarrow$};
\draw (2.5,0) node {$\Rightarrow$};
\draw (7.7,0) node {$\Rightarrow$};
\draw (2.5,-1.6) node {$\Rightarrow$};
\draw (0,0.8) node {$\Downarrow$};
\draw (0,2.9) node {$\Downarrow$};
\draw (0,-0.8) node {$\Downarrow$};
\draw (5.1,0.8) node {$\Downarrow$};
\draw (5.1,-0.8) node {$\Downarrow$};
\end{tikzpicture}
\caption{A summary of the implications of Section~\ref{sec:conclusion} for skew shapes $A$ and $B$.  With the exception of the rightmost implication, all the implications shown are known to be strict in the sense that the converse is false.}
\label{fig:moreimplications}
\end{center}
\end{figure}
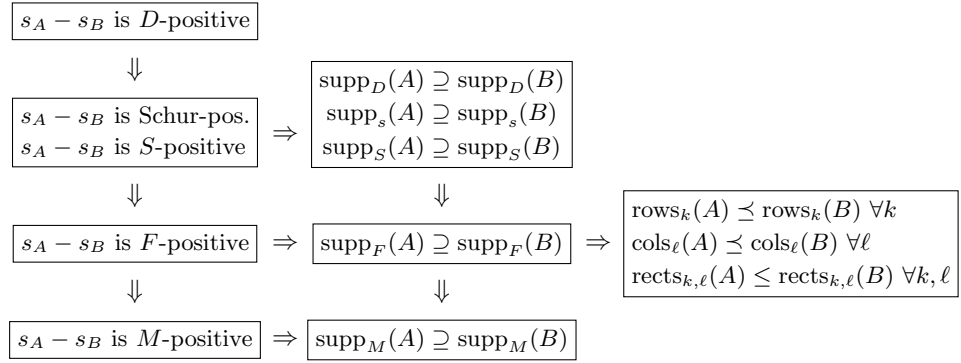

There are, of course, other known bases for quasisymmetric functions, such as those in \cite{BJR09,Luo08,Sta05}.  One possible first step to incorporating one of these other bases into our framework would be to determine the expansions of skew Schur functions in that new basis.

We now begin the derivation of the implications of Figure~\ref{fig:moreimplications}.  The implications at the bottom of the figure involving the $M$-basis are easy to see.  Indeed, the horizontal implication is by definition of support, and is strict since $\osupp{M}{3} \supseteq \osupp{M}{21}$ but $s_3 - s_{21}$ is not $M$-positive.  The vertical implications involving the $M$-basis are a consequence of any $F_\alpha$ being $M$-positive, as in~\eqref{equ:mtof}, and either implication can seen to be strict by comparing $s_{31/1}$ and $s_{211/1}$.  

The Schur functions have a very simple expansion in the $\QS$-basis:
\[
s_\lambda = \sum_\alpha \QS_\alpha,
\]
where the sum is over all compositions $\alpha$ that yield $\lambda$ when sorted into weakly decreasing order.  It follows that a symmetric function is Schur-positive if and only if it is $\QS$-positive and, similarly, Schur support containment for skew Schur functions is equivalent to $\QS$-support containment.

The derivation of the implications involving the $D$-basis requires a little more work.  
One interesting feature is that the Schur functions are not $D$-positive in general.  This means that there are two possible definitions of $\osupp{D}{A}$ for a skew shape $A$: we can either say that $\alpha$ is in the support if $D_\alpha$ appears with nonzero coefficient in the $D$-expansion of $s_A$, or we can insist that the coefficient be positive.  It turns out that it doesn't matter which convention we use in Figure~\ref{fig:moreimplications} or in any of the discussion that follows.

The expansion of $s_\lambda$ in the $D$-basis appears as \cite[Theorem~3.38]{BBSSZpr}: if $\lambda$ has $k$ parts, then
\[
s_\lambda = \sum_{\sigma} (-1)^\sigma D_{\lambda_{\sigma_1}+1-\sigma_1,\,\lambda_{\sigma_2}+2-\sigma_2,\, \ldots,\,  \lambda_{\sigma_k}+k-\sigma_k}\, ,
\]
where the sum is over all permutations $\sigma$ of $[k]$ such that $\lambda_{\sigma_i}+i-\sigma_i > 0$ for all $i \in [k]$.  Here $(-1)^\sigma$ denotes the sign of the permutation $\sigma$.   
Let us deduce some pertinent facts about this expansion.  Letting $\sigma$ be the identity permutation, we see that $D_\lambda$ appears with coefficient $+1$ in the $D$-expansion of $s_\lambda$.  Moreover, for any $\alpha$, it follows from \cite[Proposition~2.2]{BBSSZpr} that $D_\alpha$ appears with nonzero coefficient in the $D$-expansion of at most one $s_\lambda$.  In particular, $D_\lambda$ is the only term indexed by a partition that appears with nonzero coefficient in the $D$-expansion of $s_\lambda$.

If $s_A-s_B$ is $D$-positive then, in particular, the terms in $s_A-s_B$ of the form $D_\lambda$ with $\lambda$ a partition must all have nonnegative coefficients.  It then follows from the discussion of the previous paragraph that $s_A-s_B$ is Schur-positive.  An example that shows that this implication is strict is 
\[
s_{32/1}-s_{31} = s_{22} = D_{22} - D_{13}.
\]
Since each $\alpha$ appears in the $D$-support of at most one $s_\lambda$ and since $\lambda$ is in the $D$-support of $s_\lambda$, we deduce for skew shapes $A$ and $B$ that $\osupp{D}{A} \supseteq \osupp{D}{B}$ if and only if $\ssupp{A} \supseteq \ssupp{B}$.  
 
We can also quickly check that the implications in Figure~\ref{fig:moreimplications} inherited from Figure~\ref{fig:implications} are strict, with the possible exception of the rightmost arrow.  Skew shapes $A_1$ and $A_3$ from Table~\ref{tab:bases} show that $F$-positivity of $s_A-s_B$ does not imply Schur-positivity, and similarly for support containment.  Next, $A_1$ and $A_2$ from Example~\ref{exa:intro} satisfy $\fsupp{A_1} \supseteq \fsupp{A_2}$ but $s_{A_1} - s_{A_2}$ is not $F$-positive.  Finally, $s_{421/2} - s_{431/21} = -s_{32}$ is not Schur-positive, even though $\ssupp{421/2} \supseteq \ssupp{431/21}$.  This concludes our demonstration of all the implications of Figure~\ref{fig:moreimplications} and the desired strictness conditions.
 
But are there more implications that should be shown?  Let us impose the condition that $|A|=|B|$ in Figure~\ref{fig:moreimplications}, which does not change the figure or the substance of the implications.  Then one implication not shown in Figure~\ref{fig:moreimplications} that could be true is the implication of Conjecture~\ref{con:fsupport}.
Even if Conjecture~\ref{con:fsupport} is false, it could conceivably be the case that the row overlaps condition would imply containment of $M$-supports.  Apart from these exceptions, we can show that Figure~\ref{fig:moreimplications} is ``complete'' in the sense that all implications involving the various classes are implied by the implications shown.  For example, we will show that it is neither the case that $s_A-s_B$ being $M$-positive implies that $\fsupp{A} \supseteq \fsupp{B}$ nor vice versa.  To show completeness, there are four implications that we need to show are false; one can check that the absence of these four implications will imply the absence of any other conceivable implications within Figure~\ref{fig:moreimplications}.
\begin{itemize}
\item To see that $\ssupp{A} \supseteq \ssupp{B}$ does not imply that $s_A - s_B$ is $M$-positive,
take $A = 421/2$ and $B=431/21$ as at the end of the previous paragraph.  Then $s_A-s_B=-s_{32}$, which is not $M$-positive.  
\item To see that $s_A-s_B$ being $F$-positive does not imply that $\ssupp{A} \supseteq \ssupp{B}$, take $A=311/1$ and $B=22$ as in Table~\ref{tab:bases}.
\item To see that $s_A-s_B$ being $M$-positive does not imply that $\rowsk{k}{A} \domleq \rowsk{k}{B}$ for all $k$, take $A=3$ and $B=111$.  
\item To see that $\rowsk{k}{A} \domleq \rowsk{k}{B}$ for all $k$ does not imply that $s_A-s_B$ is $M$-positive, take $A=311/1$ and $B=32/1$, in which case $s_A-s_B=m_{1111}-m_{22}$
\end{itemize}

As a final remark, we have focussed on skew Schur functions because of the recent interest on relationships among them, as we described in the introduction, and because of their connection with the overlap partitions.  Of course, there may be other symmetric or quasisymmetric functions that would be worth comparing in the quasisymmetric setting.  Two natural prospects are the skew quasisymmetric Schur functions \cite{BLvW11}, which generalize the $\QS$-basis, and the skew dual immaculate functions \cite{BBSSZpr}.


\section*{Acknowledgements}

The author thanks Chris Berg, Nantel Bergeron, Christine Bessenrodt, Alejandro Morales, David Speyer and Mike Zabrocki for interesting discussions.  This research was performed while the author was on sabbatical at Trinity College Dublin, and he thanks the School of Mathematics for its hospitality.

\bibliographystyle{alpha}
\bibliography{F_support_nec_conditions}

\end{document}